\tikzset{>={Stealth[scale=1.5, bend]}}
\tikzset{inner sep=0pt, node distance=5mm,
  root/.style={circle,draw,minimum size=5pt,thick},
  broot/.style={circle,draw,minimum size=5pt,thick,fill},
  xroot/.style={circle,draw,minimum size=5pt,thick,label=below:$\times$},
  doublearrow/.style={postaction={decorate},   decoration={markings,mark=at position .6 with {\arrow[line width=1.2pt]{>}}},double distance=1.6pt,thick},
  rdoublearrow/.style={postaction={decorate},   decoration={markings,mark=at position .4 with {\arrowreversed[line width=1.2pt]{>}}},double distance=1.6pt,thick},
	rtriplearrow/.style={postaction={decorate},   decoration={markings,mark=at position .4 with {\arrowreversed[line width=1.2pt]{>}}},double distance=2.5pt,thick},
	ltriplearrow/.style={postaction={decorate},   decoration={markings,mark=at position .6 with {\arrow[line width=1.2pt]{>}}},double distance=2.5pt,thick},
  curvedline/.style={bend=right}
}
\newcommand\com[1]{}
\newcommand\C{{\mathbb C}}
\newcommand\D{{\mathcal D}}
\newcommand\E{\mathcal{E}}
\newcommand\g{{\mathfrak g}}
\newcommand\op[1]{\mathop{\rm #1}\nolimits}
\newcommand\p{\partial}
\newcommand\R{{\mathbb R}}
\newcommand\Sing{\mathcal{S}}
\theoremstyle{plain}
\newtheorem{theorem}{Theorem}
\newtheorem{prop}{Proposition}
\newtheorem{cor}{Corollary}
\newtheorem{lemma}{Lemma}
\theoremstyle{definition}
\newtheorem{definition}{Definition}
\newtheorem{remark}{Remark}
\begin{document}

\title[ODEs whose symmetry groups are not fiber-preserving]{ODEs whose symmetry groups are not fiber-preserving}

\author[Boris Kruglikov]{Boris Kruglikov$^\dagger$}
\address{$^\dagger{}^\ddagger$ Department of Mathematics and Statistics, UiT The Arctic University of Norway, Troms\o\ 9037, Norway.}

\author[Eivind Schneider]{Eivind Schneider$^\ddagger$}
\address{$^\ddagger$ Faculty of Science, University of Hradec Kr\'alov\'e, Rokitansk\'eho 62, Hradec Kr\'alov\'e 50003, Czech Republic.}

\address{Email addresses:\qquad {\tt boris.kruglikov@uit.no}\quad\text{\rm and }\quad {\tt eivind.schneider@uit.no}\hspace{1pt}.}


\begin{abstract}
We observe that, up to conjugation, a majority of symmetric higher order ODEs (ordinary differential equations)
and ODE systems have only fiber-preserving point symmetries.
By exploiting  Lie's classification of Lie algebras of vector fields, we describe all the exceptions to this in the case
of scalar ODEs and systems of ODEs on a pair of functions.

The scalar ODEs whose symmetry algebra is not fiber preserving can be expressed via absolute and
relative scalar differential invariants, while a similar description for ODE systems requires us to also invoke
conditional differential invariants and vector-valued 
relative invariants to deal with singular orbits of the action.

Investigating prolongations of the actions, we observe some interesting relations between
different realizations of Lie algebras. We also note that it may happen that the prolongation of a finite-dimensional Lie algebra acting on a differential
equation never becomes free. An example of an underdetermined ODE system for which this phenomenon occurs shows limitations of the method of moving frames.
\end{abstract}

\maketitle

 \noindent \textbf{Keywords.} Point symmetries, contact transformations, differential invariants, relative invariants, conditional invariants.

\tableofcontents

\section*{Introduction}

It is well known that among the most symmetric {\em scalar\/} ODEs
of order $n>2$ only the equation $y'''=0$ has
an irreducible contact symmetry algebra, which is\footnote{We work exclusively over $\C$;
some facts remain true over $\R$ but the classification is longer. We use the simplified notation $\mathfrak{sp}(4)=\mathfrak{sp}(4,\C)$, $\mathfrak{gl}(2)=\mathfrak{gl}(2,\C)$, etc.}
$\mathfrak{sp}(4)$. For every $n>3$ the ordinary differential equation
$y^{(n)}=0$ has the symmetry algebra
$\mathfrak{gl}(2)\ltimes S^{n-1}\C^2$ of $\dim=n+4$ with generators
embedded as fiber-preserving vector fields on the fiber bundle
$J^0=\C^2(x,y)\to\C^1(x)$, see for example \cite[Chapt. 6]{O}.

There are indeed ODEs with non-point symmetries. Consider, for instance,
the following two equations (from this moment on we use the jet-notation $y_n$ instead of $y^{(n)}$
for the derivative):
 $$
(i)\quad  y_4=3y_3^2/y_2,\qquad
(ii)\quad y_4=3y_3^2/y_2+y_3^2/y_2^2.
 $$
Then (i) has (three) contact non-point symmetries but is trivializable,
while (ii) has (only one) contact non-point symmetry and is not
even linearizable (this follows from the Lie algebra structure of
its symmetry $\mathfrak{sol}(2)\ltimes\C^3$ because it does not contain
a four-dimensional Abelian subalgebra). However the Legendre
transform $(x,y,y_1)\mapsto(-y_1,y-xy_1,x)$ maps these into resp.\
({\it i}) $y_4=0$, ({\it ii}) $y_4=y_3^2$
which have only fiber-preserving point transformations as contact symmetries.


Let us call a subalgebra of the Lie algebra of contact vector fields on
$J^1=\C^3(x,y,y_1)$ {\em essentially fiber-preserving\/} if it is
conjugate by a contact transformation to (the prolongation of) a subalgebra of point transformations preserving the foliation $\{x=\op{const}\}$.
Similarly, call a subalgebra of the Lie algebra of contact vector fields on $J^1$ {\em essentially point\/} if it is conjugate to (the prolongation of) a subalgebra of vector fields on $J^0=\C^2(x,y)$ (we exclude from those
the essentially fiber-preserving ones). Finally, the rest will be called
{\em essentially contact\/} algebras.

Thus the question: To what extent is it true that higher order ODEs have essentially point and
even essentially fiber-preserving Lie algebras of symmetries?
Clearly $y_3=0$ is an exception. Among ODEs of order $n>3$ with submaximal symmetry dimension
(equal to $n+2$ for $n\neq5,7$ and $n+3$ for $n=5,7$) all have essentially fiber-preserving
point symmetries but with three exceptions for $n=4,5,7$, see
\cite[p.205-206]{O}:
 \begin{gather*}
L_4[y]=3y_2y_4-5y_3^2=0,
L_5[y]=9y_2^2y_5-45y_2y_3y_4+40y_3^3=0,\\
L_7[y]=10y_3^3y_7-70y_3^2y_4y_6-49y_3^2y_5^2+280y_3y_4^2y_5-175y_4^4=0.
 \end{gather*}
 The first two have point symmetry algebras
$\mathfrak{aff}(2)$ and $\mathfrak{sl}(3)$ respectively, while the
last one has $\mathfrak{sp}(4)$ as a contact symmetry algebra.

It turns out that there are many more scalar ODEs of order greater than two with essentially contact and
essentially point symmetry algebras, yet they are minor among all ODEs possessing nontrivial infinitesimal symmetries.

In this paper we describe all those exceptions, basing on the
original ideas of Sophus Lie. Namely we compute
the algebras $\mathcal{R}$ of relative differential invariants
with respect to the smallest irreducible Lie algebra \eqref{L1} of contact vector fields on $J^1$
and with respect to the smallest primitive Lie algebra \eqref{L2} of vector fields on $J^0$.
This yields all algebraic equations with essentially contact and respectively essentially
point symmetry algebras. To cover analytic equations with essentially contact or point symmetries
we describe the field of absolute differential invariants, and we also find the
singular equations (consisting of singular orbits of the action). This is done in \S\ref{sect:Scalar}.
Let us note that the results of this section to a certain extent are known; this is due to
the works of Lie with notable later contributions, see \cite{L1,O,DK,WMQ}. We however make some
specifications of global nature on algebraic equations and the algebra of relative invariants,
which appear to be new. We will make some historical remarks in the Conclusion section.

In the case of ODE {\em systems\/} the situation is more complicated. We consider in detail the case of pairs of ODEs. Here the list of possible Lie algebras is larger, and the
tools of scalar relative and absolute invariants is no longer sufficient. Indeed, any such ODE system
is given by two differential constraints, possibly of different orders, and invariance of the locus
does not imply that  each of the two corresponding differential equations can be chosen such that they are invariant by themselves.
In \S\ref{sect:Sytems} we first formulate the strategy of how to find the invariant ODE systems.
Then we describe all algebraic and analytic equations with essentially point (not fiber-preserving) symmetry algebras. The essentially point Lie algebras are in this case the Lie algebras of point symmetries that preserve either no foliation or a 1-dimensional foliation but no 2-dimensional foliation.

\begin{table}[h!]
\begin{tabular}{|c|l|l|l|}
\hline
Lie algebra $\mathfrak g$ & Invariant condition & Scalar invariants & $\mathfrak g$-inv. ODE system $\mathcal E$ \\\hline \hline
\multirow{8}{*}{\begin{tabular}{c} $\mathfrak{so}(3) \ltimes \mathbb C^3$ \\ \\ \\ Ref: \S \ref{sect:Minkowski}\\ Th. \ref{th:InvarMinkowski}, \ref{th:invarEqMinkowski} \end{tabular}} &  & $\mathcal A = \langle I_2, I_{3a}, \nabla\rangle$ & \begin{tabular}{c} \rule[-0ex]{0pt}{2.1ex} $\{F=0, G=0\}$ \\ $F, G \in \mathcal A$\end{tabular}  \\\cline{2-4}
 & \multirow{2}{*}{$\Sigma\colon R_1=0$}  & $\mathcal A_{\Sigma}=\langle  J_4, \nabla_\Sigma\rangle$ & \begin{tabular}{c} \rule[-0ex]{0pt}{2.1ex}$\{F=0, R_1=0\}$ \\ $F \in \mathcal A_{\Sigma}$ \end{tabular}  \\\cline{3-4}
  & & $Q_2$ & \rule[-0ex]{0pt}{2.1ex}$\{Q_2=0, R_1=0\}$  \\\cline{2-4}
   & $\Pi \colon R_2=0$ & $\mathcal A_{\Pi}=\langle K_3, \nabla_\Pi\rangle$ & \begin{tabular}{c}\rule[-0ex]{0pt}{2.1ex} $ \{F=0, R_2=0\}$ \\ $F \in \mathcal A_{\Pi}$ \end{tabular}  \\\cline{2-4}
    & $y_2=0, z_2=0$ &  & \rule[-0ex]{0pt}{2.1ex} $ \{y_2=0, z_2=0\}$   \\\hline \hline
\multirow{8}{*}{\begin{tabular}{c} $\mathfrak{so}(4)$ \\ \\  \\ Ref: \S\ref{sect:so4} \\ Th. \ref{th:InvarSpaceform}, \ref{th:invarEqSpaceform} \end{tabular}} &  & $\mathcal A = \langle I_2, I_{3a}, \nabla\rangle$ & \begin{tabular}{c}\rule[-0ex]{0pt}{2.1ex} $ \{F=0, G=0\}$ \\ $F, G \in \mathcal A$\end{tabular}   \\\cline{2-4}
 & \multirow{2}{*}{$\Sigma \colon R_1=0$}  & $\mathcal A_{\Sigma}=\langle  J_4, \nabla_\Sigma\rangle$ & \begin{tabular}{c} \rule[-0ex]{0pt}{2.1ex}$ \{F=0, R_1=0\}$ \\ $F \in \mathcal A_{\Sigma}$ \end{tabular}  \\\cline{3-4}
  & & $Q_2$ & \rule[-0ex]{0pt}{2.1ex} $\{Q_2=0, R_1=0\}$  \\\cline{2-4}
   & $ \Pi \colon R_2=0$ & $\mathcal A_{\Pi}=\langle K_3, \nabla_\Pi \rangle$ & \begin{tabular}{c} \rule[-0ex]{0pt}{2.1ex} $\{F=0, R_2=0\}$ \\ $F \in \mathcal A_{\Pi}$ \end{tabular}  \\\cline{2-4}
    & \rule[-3ex]{0pt}{7ex} \begin{tabular}{l} $y_2=y_1(1+ \tfrac{2y_1 z_1}{e^x})$ \\ $z_2=z_1(1+\tfrac{2y_1 z_1}{e^x})$ \end{tabular} &  & $\left\{\tfrac{y_2}{y_1} = 1+\tfrac{2 y_1 z_1}{e^x} = \tfrac{z_2}{z_1}\right\}$   \\\hline  \hline
    \multirow{10}{*}{\begin{tabular}{c}$\mathfrak{sp}(4)$ \\ \\ \\ Ref: \S\ref{sect:sp4} \\ Th. \ref{th:Invarsp4},  \ref{tm24} \end{tabular}} & & $\mathcal A = \langle I_4, I_{5a}, \nabla\rangle$ & \begin{tabular}{c} \rule[-0ex]{0pt}{2.1ex}$\{F=0, G=0\}$ \\ $F, G \in \mathcal A$\end{tabular} \\\cline{2-4}
 & \multirow{3}{*}{$\Sigma \colon R_1=0$}  & $\mathcal A_{\Sigma}=\langle  J_8, \nabla_\Sigma \rangle$ & \begin{tabular}{c}\rule[-0ex]{0pt}{2.1ex} $\{F=0, R_1=0\}$ \\ $F \in \mathcal A_{\Sigma}$ \end{tabular}  \\\cline{3-4}
  & & $Q_2$ &\rule[-0ex]{0pt}{2.1ex} $\{Q_2=0, R_1=0\}$   \\\cline{3-4}
  & & $Q_6$ &\rule[-0ex]{0pt}{2.1ex} $\{Q_6=0, R_1=0\}$   \\\cline{2-4}
   & \multirow{2}{*}{$\Pi \colon R_3=0$} & $\mathcal A_{\Pi}=\langle K_5, K_6,  \nabla_\Pi\rangle$ & \begin{tabular}{c}\rule[-0ex]{0pt}{2.1ex} $\{F=0, R_3=0\}$ \\ $F \in \mathcal A_{\Pi}$ \end{tabular}  \\\cline{3-4}
  & & $P_4$&\rule[-0ex]{0pt}{2.1ex} $\{P_4=0, R_3=0\}$   \\\cline{2-4}
    & $y_2=0, z_2=0$ &  &\rule[-0ex]{0pt}{2.1ex} $\{y_2=0, z_2=0\}$  \\\hline
\end{tabular}

\caption{\vphantom{$\dfrac{A}{A}$}%
Algebraic determined ODE systems with finite-dimensional primitive Lie algebras of symmetries. 
Notations: $I_i$ -- absolute differential invariants; $J_i$, $K_i$ -- conditional absolute differential invariants; $R_i$ -- relative differential invariants; $Q_i$, $P_i$ -- conditional relative differential invariants; $\mathcal A$ --  algebra of rational absolute differential invariants with given generators;  $\mathcal A_\Sigma$ -- conditional relative invariants on the underdetermined ODE $\Sigma$.}
\label{tab:primitive}
\end{table}

Our main results about ODE systems are contained in \S \ref{sect:Minkowski}-\ref{sect:Lie29}, and summarized in Table \ref{tab:primitive} and Table \ref{tab:imprimitive}. They are novel and provide new interesting classes of ODE systems. These systems, expressed in terms of invariants, are summarized in theorems at the end of each subsection, following the manner in which Lie often presented his results \cite{L2}. In each subsection we use repeated notation, like $R_i$ for relative invariants, $I_j$ for absolute invariants etc.
These quantities keep the same meaning within the actual subsection but change when we pass to the next one;
this allows us to keep the same strategy of exposition, while avoiding to introduce complicated notations.

\begin{table}[h!]
\begin{tabular}{|c|l|l|l|}
\hline
Lie algebra $\mathfrak g$ & Invariant condition & Scalar invariants & $\mathfrak g$-inv. ODE system $\mathcal E$ \\\hline \hline
\multirow{7}{*}{\begin{tabular}{c} Lie{\hskip1pt}16 \\ \\  Ref: \S\ref{sect:Lie16}\\ Th. \ref{th:Invar16},   \ref{th:invarEq16} \end{tabular}} &  & $\mathcal A = \langle I_2, I_{3a}, \nabla\rangle$ & \begin{tabular}{c} \rule[-0ex]{0pt}{2.1ex} $\{F=0, G=0\}$ \\ $F, G \in \mathcal A$\end{tabular}  \\\cline{2-4}
 & \multirow{2}{*}{$\Sigma \colon R_1=0$}  & $\mathcal A_{\Sigma}=\langle  J_4, \nabla_\Sigma\rangle$ & \begin{tabular}{c} \rule[-0ex]{0pt}{2.1ex} $\{F=0, R_1=0\}$ \\ $F \in \mathcal A_{\Sigma}$ \end{tabular}  \\\cline{3-4}
 && $R_2$ & \rule[-0ex]{0pt}{2.1ex}  $\{R_2=0, R_1=0\}$ \\\cline{2-4}
   & $\Pi \colon R_2=0$ & $\mathcal A_{\Pi}=\langle K_2, \nabla \rangle$ & \begin{tabular}{c} \rule[-0ex]{0pt}{2.1ex} $ \{F=0, R_2=0\}$ \\ $F \in \mathcal A_{\Pi}$ \end{tabular}  \\\hline \hline
\multirow{15}{*}{\begin{tabular}{c} Lie{\hskip1pt}27 \\ \\ \\ \\  Ref: \S\ref{sect:Lie27} \\ Th. \ref{th:Invar27}, \ref{th:invarEq27} \end{tabular}} &  & $\mathcal A = \langle I_3, I_{4a}, I_{4b}, \nabla\rangle$ & \begin{tabular}{c} \rule[-0ex]{0pt}{2.1ex} $ \{F=0, G=0\}$ \\ $F, G \in \mathcal A$\end{tabular}   \\\cline{2-4}
 & \multirow{3}{*}{$\Sigma \colon R_1=0$}  & $\mathcal A_{\Sigma}=\langle J_6, \nabla_\Sigma\rangle$ & \begin{tabular}{c} \rule[-0ex]{0pt}{2.1ex} $ \{F=0, R_1=0\}$ \\ $F \in \mathcal A_{\Sigma}$ \end{tabular}  \\\cline{3-4}
  & & $Q_1$ & $\{Q_1=0, R_1=0\}$  \\\cline{3-4}
  & & $Q_4$ & $\{Q_4=0, R_1=0\}$  \\\cline{2-4}
   & \multirow{3}{*}{$\Pi_a \colon R_{2a}=0$} & $\mathcal A_{\Pi_a}=\langle K_5, \nabla_a\rangle$ & \begin{tabular}{c} \rule[-0ex]{0pt}{2.1ex} $\{F=0, R_{2a}=0\}$ \\ $F \in \mathcal A_{\Pi_a}$ \end{tabular}  \\\cline{3-4}
   & & $R_{2b}$ & \rule[-0ex]{0pt}{2.1ex} $\{ R_{2b}=0, R_{2a}=0\}$ \\\cline{3-4}
   & & $P_4$ &  \rule[-0ex]{0pt}{2.1ex} $\{P_4=0, R_{2a}=0 \}$ \\\cline{2-4}
   & \multirow{3}{*}{$\Pi_b \colon R_{2b}=0$} & $\mathcal A_{\Pi_b}=\langle L_5, \nabla_b\rangle$ & \begin{tabular}{c} \rule[-0ex]{0pt}{2.1ex} $\{F=0, R_{2b}=0\}$ \\ $F \in \mathcal A_{\Pi_b}$ \end{tabular}  \\\cline{3-4}
   & & ($R_{2a}$) & \rule[-0ex]{0pt}{2.1ex} ($\{ R_{2a}=0, R_{2b}=0\})$ \\\cline{3-4}
   & & $T_4$ & \rule[-0ex]{0pt}{2.1ex} $\{T_4=0, R_{2b}=0 \}$  \\\hline  \hline
    \multirow{5}{*}{\begin{tabular}{c} Lie{\hskip1pt}29 \\ \\ Ref: \S\ref{sect:Lie29} \\ Th. \ref{th:Invar29}, \ref{th:invarEq29} \end{tabular}} & & $\mathcal A = \langle I_3, I_{4a}, \nabla\rangle$ & \begin{tabular}{c} \rule[-0ex]{0pt}{2.1ex} $\{F=0, G=0\}$ \\ $F, G \in \mathcal A$\end{tabular} \\\cline{2-4}
 & \multirow{2}{*}{$\Sigma \colon R_2=0$}  & $\mathcal A_{\Sigma}=\langle J_3, \nabla_\Sigma\rangle$ & \begin{tabular}{c}\rule[-0ex]{0pt}{2.1ex}  $\{F=0, R_2=0\}$ \\ $F \in \mathcal A_{\Sigma}$ \end{tabular}  \\\cline{3-4}
  & & $Q_2$ & \rule[-0ex]{0pt}{2.1ex} $\{Q_2=0, R_2=0\}$   \\\hline
\end{tabular}

\caption{\vphantom{$\dfrac{A}{A}$}%
Algebraic determined ODE systems that have finite-dimensional Lie algebras of symmetries that preserve a 1-dimensional foliation.} \label{tab:imprimitive}
\end{table}

Let us also note that the observation we made about the most symmetric ODEs above extends to ODE {\em systems}
(for simplicity restrict to systems of equations of the same order).
Indeed, the maximally symmetric system of order $n$ in $m$ dependent variables $y^\alpha_n=0$
($\alpha=1,\dots,m$; $m>1$) has
fiber-preserving symmetry $\bigl(\mathfrak{sl}(2)\oplus\mathfrak{gl}(m)\bigr)\ltimes\bigl(S^{n-1}\C^2\otimes\C^m\bigr)$
for $n>2$ and has essentially point symmetry algebra $\mathfrak{sl}(m+2)$ for $n=2$;
the symmetry algebra has dimension $d_{\text{max}}=m(m+n+2\delta_{n,2})+3$.
Submaximally symmetric ODE systems with symmetry dimension $d_{\text{submax}}=d_{\text{max}}-2$
were computed in \cite{KT}: they are either linear Wylczynski type
$y^\alpha_n=y^\alpha_{n-r}$ with symmetry $\bigl(\C\oplus\mathfrak{gl}(m)\bigr)\ltimes\bigl(\C^n\otimes\C^m\bigr)$
or in the case of pairs of ODEs ($m=2$) the C-class-type with two different cases:
 $$
\{2y_1y_3=3y_2^2,\ 2y_1z_3=3y_2z_2\} \quad\text{ and }\quad \{y_3=z_2^2,\ z_3=0\}.
 $$
Abstractly the symmetry algebras are, respectively, $(\C\oplus\mathfrak{sl}(2)\oplus\mathfrak{sl}(2))\ltimes(\C^2\otimes\C^2)$
and the graded Lie algebra $\mathfrak{gl}(2)_0\oplus(\C\oplus\C^3)_1\oplus(\C^3)_2$, where we indicate the grading
and the $\mathfrak{sl}(2)$ module type.
All those symmetry algebras with submaximal dimension are fiber preserving.

Our work is based on the realization of Lie algebras by vector fields, and we make several observations related to this at the end of the paper. The choice of setup is important when classifying infinitesimal actions (local, semi-global, etc)
as well as when classifying invariant differential equations (algebraic, analytic, etc). In \S\ref{Sect:Misc} we show, using different realizations of $\mathfrak{sl}(2)$, how the choice of realization and coordinate chart affects the order and other properties of generators for the absolute and relative differential invariants. We also show, on an example
of $\mathfrak{sp}(4)$, that different realizations can be related by a jet-prolongation and a projection, and that this can be interpreted as a twistor correspondence.

We observe that the prolongation of certain Lie algebra actions on an (underdetermined) differential equation of infinite type never achieves freeness,
something that is impossible without a differential constraint (see \cite{AO}). This demonstrates limitations
of the method of moving frames even for finite-dimensional Lie groups.
Finally we will justify the claim that the set of ODEs (scalar or systems) with symmetry algebras that are not essentially fiber-preserving is meager by briefly discussing the moduli space of invariant ODEs with symmetry.

We conclude in \S\ref{OutL} with an overview of the main results and discuss possible generalizations.
In appendix \ref{ApA} we explain the conditions for the symmetry algebra of a scalar ODE or an ODE system
to be finite-dimensional, which justifies our usage of the classification of finite dimensional Lie algebras
of vector fields in the plane and in the space.
Appendix \ref{ApB} is devoted to a brief review of the Sophus Lie classification relevant for our purposes.

\section{Scalar ODEs} \label{sect:Scalar}

We start by summarizing the required concepts and setting the notations, cf.\ \cite{KLV,O}.

Let $J^i=J^i(\mathbb C)$ be the space of jets of (local) functions $\mathbb C(x)\to\mathbb C(y)$,
and let $x,y_0,\dots,y_i$ be the induced coordinates on $J^i$ (we identify $y=y_0$).
Let $\pi_i$ denote the projection $J^i \to \mathbb C$ and let  $\pi_{j,i}$ denote the projection $J^j \to J^i$ for $j> i$.
A scalar ODE of order $k$ can be identified with a submanifold
\[ \mathcal E = \{ F(x,y,\dots,y_k) = 0\} \subset J^k\]
defined by a (local) analytic function\footnote{Here and throughout the paper we use $\mathcal{O}(J^k)$ to denote the ring of (local) analytic functions on $J^k$. Alternatively, one may use it for meromorphic functions on $J^k$ in all our statements.} $F \in \mathcal{O}(J^k)$ with $F_{y_k}\not\equiv 0$.
This implies that in the neighborhood of a generic point in $J^k$ where $F_{y_k}$ is not equal to zero, the equations are normal,
implying that $y_k$ can be expressed locally in terms of a function on $J^{k-1}$.

An (infinitesimal) symmetry of $\mathcal E$ is a vector field on $J^k$ tangent to $\mathcal E$, which preserves
the Cartan distribution on $J^k$. The latter is spanned by $D_x^{(k)}=\p_x+\sum_{i=1}^{k}y_{i}\p_{y_{i-1}}$ and
$\p_{y_{k}}$. By the Lie-B\"acklund theorem all vector fields preserving the Cartan distribution on $J^k$
are prolongations of contact vector fields on $J^1$. Thus, every symmetry of $\mathcal E$ is
a contact vector field $X$ satisfying the condition $X^{(k)}(F)|_{\mathcal E} =0$, where $X^{(k)}$ denotes the prolongation of $X$ to $J^k$. It is a point symmetry if $X$ preserves the fibers of $\pi_{1,0}$ and
fiber-preserving if it also respects the fibers of $\pi_0$ (and hence $\pi_1=\pi_0\circ\pi_{1,0}$);
as in the introduction we use the word ``essential'' to
signify these properties after a possible conjugation by a local contact diffeomorphism.

A Lie algebra of contact vector fields is called irreducible if there exists no invariant foliation by Legendrian curves;
this is equivalent to nonexistence of an invariant subdistribution in the contact distribution. Otherwise it is called reducible. Since a line distribution can be locally rectified, the reducible case corresponds to esentially point and fiber-preserving Lie algebras.

A Lie algebra of vector fields on $J^0$ is called imprimitive if it preserves a 1-dimensional foliation on $J^0$ and
it is called primitive otherwise. Thus the imprimitive case corresponds to essentially fiber-preserving Lie algebras.

Our main question in this section is: Which ODEs have (i) essentially contact, or (ii) essentially point Lie algebras of symmetries? In both cases we refer to the full symmetry algebra. 

\smallskip
\noindent$\bullet$ \textbf{Finite-dimensional irreducible Lie algebras of contact vector fields on $J^1\simeq\mathbb C^3$}
were classified by Lie \cite{L1}, see also \cite[Table 4]{O}.
There are only three such algebras up to local contact transformations, and they all contain the
6-dimensional Lie subalgebra
 \begin{equation}\label{L1}
\langle \p_x, \p_y, x\p_y+\p_{y_1}, x^2\p_y+2x\p_{y_1},
-x\p_x+y_1\p_{y_1}, 2y_1\p_x+y_1^2\p_y \rangle
 \end{equation}
abstractly isomorphic to $\mathfrak{sl}(2)\ltimes\mathfrak{heis}(3)$, a maximal subalgebra in
the parabolic $\mathfrak{p}_1\subset\mathfrak{sp}(4)$.

\smallskip
\noindent$\bullet$ \textbf{Finite-dimensional primitive Lie algebras of point vector fields on $\mathbb C^2$}
were classified by Lie \cite[p.124]{L2}, see also \cite[Table 2]{O}.
There are only three such algebras up to local point transformations, and they all contain the
5-dimensional Lie subalgebra $\mathfrak{saff}(2)=\mathfrak{sl}(2)\ltimes\C^2$ given by
 \begin{equation}\label{L2}
\langle \p_x, \p_y, x\p_y, -x\p_x+y\p_y, y\p_x \rangle .
 \end{equation}

The Lie algebras \eqref{L1} and \eqref{L2} will be treated in \ref{sect:contact} and \ref{sect:point} respectively.

\begin{figure}
\begin{tikzcd}
& &C_2 = \mathfrak{sp}(4) \\
&\mathfrak{sl}(2) \ltimes \mathfrak{heis}(3)\arrow[hook]{rr}\arrow[hook]{ru} & &\mathfrak{p}_1(C_2)=\mathfrak{gl}(2) \ltimes \mathfrak{heis}(3) \arrow[hook]{lu}
\end{tikzcd}
\begin{tikzcd}
& &A_2 = \mathfrak{sl}(3) \\
&\mathfrak{saff}(2)\arrow[hook]{rr}\arrow[hook]{ru} & &\mathfrak{p}_1(A_2)=\mathfrak{aff}(2) \arrow[hook]{lu}
\end{tikzcd}
\caption{Diagram of inclusions of the irreducible contact and primitive point Lie algebras of vector fields in $\mathbb C^2$. }
\label{fig:2Dinclusions}
\end{figure}
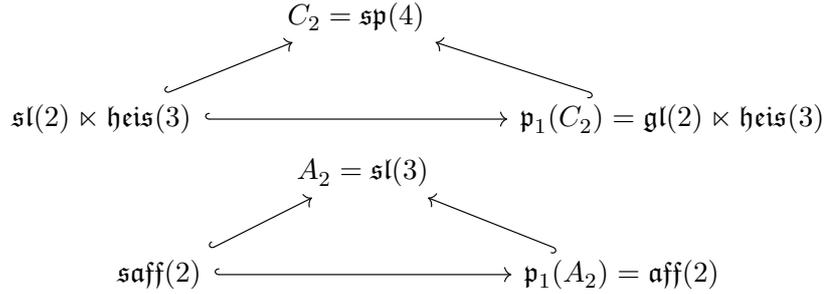

\begin{remark} \label{rk:realization}
In Figure \ref{fig:2Dinclusions} the Lie algebras are labelled by their abstract Lie algebra structure. It is important to keep in mind that they are not only abstract Lie algebras, but realizations as Lie algebras of vector fields on $\mathbb C^3$ and $\mathbb C^2$, respectively. The specific realization is important, as we indicate in \S\ref{sl2}. However, some realizations may become equivalent after prolongation as we demonstrate in \S\ref{sect:twistor}.
\end{remark}

\subsection{Recollection on differential invariants}\label{rode}

Let $\mathfrak g$ be a Lie algebra of point or contact vector fields. The $\mathfrak g$-orbit through $\theta \in J^k$ is the set of all points obtained by the flow of vector fields in $\mathfrak g$.
(This is equivalent to the orbit of the corresponding connected Lie pseudogroup.)
The space $J^k$ is partitioned into $\mathfrak g$-orbits, and any $\mathfrak g$-invariant ODE of order $k$ is a union of $\mathfrak g$-orbits in $J^k$.

 \begin{definition}
A differential invariant of order $k$ is a function on $J^k$ which is constant on $\mathfrak g$-orbits.
 \end{definition}

Under sufficiently general conditions, which are satisfied for all the Lie algebras of vector fields that we consider
in this paper, the $\mathfrak g$-orbits in general position in $J^k$ can be separated
by rational differential invariants of order $k$, see \cite{KL}. These conditions are that the corresponding Lie pseudogroup $G$ is transitive on $J^0$ (or $J^1$ for contact transformations) and that the diffeomorphism subgroups $G_a^k$ are algebraic for every $k \geq i$ and every point $a \in J^i$ ($i=0$ or $i=1$). When these conditions hold, the orbits in $J^k$ can be identified with orbits of the algebraic group $G_a^k$ on the fibers of $J^k\to J^i$ ($i=0$ or $i=1$), and it follows from Rosenlicht's theorem that the latter orbits are separated by rational invariants. It is thus sufficient to consider differential invariants whose restriction to fibers of $J^k \to J^i$ are rational. We will refer to such differential invariants using the adjective ``rational'' and point out that this ``rationality'' property of differential invariants is preserved under point and contact transformations, respectively. All the Lie algebras we consider act transitively, and  for all of them the stabilizer of a point in $J^i$ ($i=0$ or $i=1$) integrates to an algebraic Lie group.

For finite-dimensional Lie algebras, the number of independent differential invariants of order $k$
will grow without bound as $k$ increases,
but the field of rational differential invariants is finitely generated as a differential field:
There exists an invariant derivation acting on the space of invariants.

Differential invariants computed in this paper are  usually found by solving the system
 \[
X^{(k)}(I)=0, \qquad X \in \mathfrak g.
 \]
Similarly, the derivation is found by solving the system
 \[
[X^{(\infty)},f D_x]=0, \qquad X \in \mathfrak g,
 \]
where $f$ is a function on $J^i$ for some $i$ and $D_x=\p_x+\sum_{i=1}^{\infty} y_i \p_{y_{i-1}}$
is the total derivative operator.

 \begin{definition}
A relative differential invariant is a function on $J^k$ with $\mathfrak g$-invariant zero locus.
 \end{definition}

Relative differential invariants can be found by solving the system
 \[
X^{(k)}(R)=\lambda(X)R, \qquad X \in \mathfrak g,
 \]
where $\lambda\in\g^*\otimes\mathcal{F}(J^\infty)$ is called the weight of $R$.
Here by $\mathcal{F}(J^\infty)=\cup_j\mathcal{F}(J^j)$
we mean the appropriate algebra of functions on the space of jets.
We choose rational functions in higher jets $y_k$ ($k>0$) for absolute invariants $I$
(some other choices: analytic/smooth in the complex/real cases, respectively)
and {\em polynomial\/} ones for relative invariants $R$, so that
$\lambda\in\g^*\otimes\mathcal{P}(J^\infty)$.

To facilitate computations we used Maple. Often, the results of pdsolve are not rational functions.
They may not even be invariant (that is, not constant on orbits, see Remark \ref{rmk:derivation}).
However, they  can still be used to generate a transcendence basis for the field of rational invariants.
Having a transcendence basis is in general not sufficient for generating the whole field of differential invariants.
But in the cases we consider one can show
that the field generated by the transcendence basis has
no algebraic field extensions within the field of rational invariants.

An approach due to Sophus Lie to find fundamental relative invariants of Lie group actions is as follows.
 An effective action of a finite-dimensional Lie group is eventually free (see \cite{Ovs} or \cite{AO}). In fact, for an (effective) Lie group $G$
freeness of the action is attained on $J^s$ where $s=\dim G-2$, except for particular actions of $\text{Sol}(2) \ltimes \mathbb C^{k}$ where a pseudo-stabilization effect happens (see page 202 and 1.7b of Table 5 \cite{O}) in which case $s=\dim G-1$.  The Lie determinant is the evaluation of a volume form
on $J^s$ on the prolonged vector fields $X^{(s)}_t$, where $\{X_t\}_{t=1}^{\dim G}$ is a basis of the Lie algebra of vector fields corresponding to the $G$-action; in the case of pseudo-stabilization this is not a scalar, but a 1-form proportional to the differential of an absolute invariant.
The irreducible factors of this {\em algebraic\/} (in higher jets) expression are relative invariants $R_j$.

The following theorem is a slightly improved version of \cite[Theorem 6.36]{O}.

 \begin{theorem}[Sophus Lie]
For an algebraic Lie group action, an invariant ODE $\E=\{F=0\}\subset J^k$ is either given by fundamental relative
invariants $R_j=0$ or can be expressed through absolute differential invariants $F=f(I_1,\dots,I_r)$,  where $I_1,\dots, I_r$ are generators for the field of rational absolute differential invariants of order $k$.
 \end{theorem}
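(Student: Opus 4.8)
The plan is to classify the invariant ODE $\mathcal E = \{F = 0\} \subset J^k$ according to whether the defining function $F$ is, or is not, divisible by one of the fundamental relative invariants $R_j$ coming from the irreducible factors of the Lie determinant. First I would set up the situation carefully: since the $G$-action is algebraic and eventually free (freeness attained on $J^s$ with $s = \dim G - 2$, or $s = \dim G - 1$ in the pseudo-stabilization case, as recalled above), the orbit structure on $J^k$ for $k \geq s$ is governed by the algebraic group $G_a^k$ acting on the fibers of $J^k \to J^i$. By Rosenlicht's theorem these generic fiber-orbits are separated by rational invariants, so on the open set where the action is free the field of rational absolute invariants of order $k$ has a transcendence basis $I_1, \dots, I_r$ that genuinely separates generic orbits.

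The core dichotomy I would then argue is the following. Consider the singular locus $\Sigma = \{R_1 \cdots R_m = 0\}$ cut out by the fundamental relative invariants, i.e.\ the vanishing locus of the Lie determinant; this is exactly where the prolonged action fails to be free and the generic orbit dimension drops. If $\mathcal E$ is not contained in $\Sigma$, then $\mathcal E$ meets the open stratum on which the orbits are separated by the $I_j$. On that open set the restriction $F|_{J^k \setminus \Sigma}$ has a $G$-invariant zero locus, and because the generic orbits there are level sets of the tuple $(I_1, \dots, I_r)$, the zero set of $F$ must be a union of such level sets. A Nullstellensatz-type argument (working fiberwise with the rational structure guaranteed above) then forces $F$ to agree, up to a nonvanishing factor, with a function of the invariants, giving $F = f(I_1, \dots, I_r)$. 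If instead $\mathcal E \subset \Sigma$, then $F$ vanishes on the zero locus of the product $R_1 \cdots R_m$; since each $R_j$ is an irreducible polynomial (in the higher jets) with $G$-invariant zero locus, $F$ must be divisible by at least one $R_j$, and by invariance of $\mathcal E$ one checks that $\mathcal E$ can be taken to be one of the components $R_j = 0$.

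For the implementation I would proceed in the order: (1) establish that the $I_j$ separate generic orbits off $\Sigma$, invoking transitivity on $J^i$, algebraicity of $G_a^k$, and Rosenlicht as in the paragraph preceding the theorem; (2) prove the ``off $\Sigma$'' case, extending the set-theoretic statement ``zero locus is a union of orbits'' to the functional statement $F = f(I_1, \dots, I_r)$; (3) prove the ``on $\Sigma$'' case via irreducibility and unique factorization of the relative invariants. I expect the main obstacle to be step (2): passing from the \emph{set-theoretic} fact that $\{F = 0\}$ is a union of generic level sets of $(I_1, \dots, I_r)$ to the \emph{algebraic} conclusion $F = f(I_1, \dots, I_r)$. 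This requires knowing that the field generated by the transcendence basis has no algebraic extensions inside the field of rational invariants — precisely the point flagged earlier in the excerpt (``the field generated by the transcendence basis has no algebraic field extensions within the field of rational invariants''), which must be verified for each Lie algebra under consideration. Controlling the behavior of $F$ along $\Sigma$, where orbit dimension drops and the naive invariant description degenerates, is what makes the split into the two cases unavoidable and is the source of the ``slight improvement'' over \cite[Theorem 6.36]{O}.
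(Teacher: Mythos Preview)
The paper does not prove this theorem: it is stated without proof as a classical result attributed to Sophus Lie, with a pointer to \cite[Theorem~6.36]{O} and the remark that it is ``a slightly improved version'' thereof. There is therefore nothing in the paper to compare your argument against.

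Your outline is a reasonable reconstruction of the standard argument (singular locus versus generic stratum, Rosenlicht on the fibers, factorization into the irreducible $R_j$), and you correctly flag the delicate point in step~(2). One small correction in step~(3): from $\mathcal E\subset\Sigma$ you only get that $F$ vanishes on $\Sigma$, but you want that the codimension-one hypersurface $\mathcal E$ is a component of the codimension-one set $\Sigma=\{R_1\cdots R_m=0\}$; this follows from irreducibility of $\mathcal E$ (or of $F$), which you should state explicitly rather than leave as ``one checks''.
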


The function $f$ is rational or analytic (meromorphic), depending on the setup.

\subsection{ODEs with essentially contact symmetries} \label{sect:contact}

From the discussion above, we obtain the following description for ODEs of order greater than 2.
The restriction on the order is made in order to avoid ODEs having infinite-dimensional Lie algebras of contact symmetries (see Appendix \ref{ApA}).

\begin{prop}
Assume that the Lie algebra of contact symmetries of a scalar ODE of order\linebreak
 $k>2$ is irreducible.
Then it contains a Lie subalgebra $\g$ that, up to a local contact transformation, is given by \eqref{L1}.
 \end{prop}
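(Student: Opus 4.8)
The plan is to reduce the statement to the classification result already cited in the excerpt. We are told (in the bullet point preceding equation \eqref{L1}) that Lie classified all finite-dimensional irreducible Lie algebras of contact vector fields on $J^1\simeq\C^3$, that there are exactly three of them up to local contact transformations, and that \emph{all three} contain the $6$-dimensional subalgebra \eqref{L1} isomorphic to $\mathfrak{sl}(2)\ltimes\mathfrak{heis}(3)$. So the entire content of the proposition is to bridge the gap between ``the symmetry algebra of $\E$ is irreducible'' and ``the symmetry algebra is one of the three on Lie's list.'' The one nontrivial hypothesis to exploit is $k>2$, whose role is precisely to force finite-dimensionality.

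First I would invoke the Lie--B\"acklund theorem, quoted earlier, to identify the symmetry algebra $\g$ of $\E$ with a Lie algebra of contact vector fields on $J^1$ (every Cartan-distribution-preserving field on $J^k$ prolongs a contact field on $J^1$). Next I would argue that $\g$ is finite-dimensional: this is exactly the point of the restriction $k>2$, and the excerpt explicitly defers the justification to Appendix \ref{ApA}, so I would cite that appendix. For $k\le 2$ there exist ODEs with infinite-dimensional contact symmetry, but for $k>2$ the symmetry algebra of a genuine $k$-th order equation is finite-dimensional. Having established that $\g$ is a finite-dimensional irreducible Lie algebra of contact vector fields on $J^1$, I would apply Lie's classification (\cite{L1}, \cite[Table 4]{O}): $\g$ must be locally contact-equivalent to one of the three listed algebras. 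Since each of these three contains the subalgebra \eqref{L1}, after the corresponding conjugation $\g$ contains a copy of \eqref{L1}, which is the claim.

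The main obstacle, and the only place where real care is needed, is the finite-dimensionality step. Lie's list as quoted classifies finite-dimensional irreducible algebras; one must rule out the possibility that $\g$ is an infinite-dimensional irreducible contact algebra. This is where $k>2$ enters: an order-$k$ equation with $k\ge 3$ cannot have an infinite-dimensional contact symmetry algebra (the infinite-dimensional contact symmetry phenomena are confined to low order, e.g. first- and second-order equations and underdetermined situations). I would present this as an appeal to Appendix \ref{ApA} rather than reprove it, since the excerpt flags it as the appendix's purpose. A minor secondary point is that irreducibility is the hypothesis as stated, and ``irreducible'' is defined earlier as the nonexistence of an invariant Legendrian foliation (equivalently, no invariant subdistribution of the contact distribution), which is exactly the notion under which Lie's three-algebra classification holds; so no translation of the hypothesis is required.

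In summary, the proof is short: (i) pass from $J^k$ to $J^1$ via Lie--B\"acklund; (ii) use $k>2$ together with Appendix \ref{ApA} to conclude $\g$ is finite-dimensional; (iii) apply Lie's classification to obtain one of three algebras up to local contact transformation; (iv) observe each contains \eqref{L1}, so after conjugation $\g\supseteq\eqref{L1}$. I expect essentially no computation beyond recalling the three-element list, with all the weight resting on the finite-dimensionality argument imported from the appendix.
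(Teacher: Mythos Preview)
Your proposal is correct and matches the paper's own reasoning: the paper does not give a separate proof but states that the proposition follows ``from the discussion above'' (Lie's classification of the three irreducible contact algebras, all containing \eqref{L1}) together with the remark that the order restriction $k>2$ ensures finite-dimensionality via Appendix~\ref{ApA}. Your write-up simply makes these steps explicit, including the Lie--B\"acklund reduction to $J^1$, so there is nothing to add.
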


Absolute differential invariants of the action of $\g$ on $J^\infty$ are generated (in Lie-Tresse sense \cite{KL})
by one differential invariant and one invariant derivation (see also \cite[Table~5 \#4.1]{O}).
They can be given in terms of the relative differential invariants, the simplest of which are
 \begin{align*}
R_3&= y_3, &\qquad
R_5 &= 3 y_3 y_5-5 y_4^2,\\
R_6 &= 9 y_3^2 y_6 -45 y_3 y_4 y_5+40 y_4^3, &\qquad
R_7 &= 9y_3^3 y_7 -63 y_3^2 y_4 y_6+105 y_3 y_4^2 y_5-35 y_4^4,\\
R_8 &= 9y_3^4y_8-84y_3^3y_4y_7 + &&\hspace{-87pt} 210y_3^2y_4^2y_6-105y_3^2y_4y_5^2+210y_3y_4^3y_5-280y_4^5.
 \end{align*}

The space of weights is one-dimensional and is generated by the cocycle
$\lambda(X)=-L_X(\alpha)/\alpha$, where $\alpha=dx$.
Note that $\lambda(X_5)=1$ for the 5th basis element $X_5^{(\infty)}=-x\p_x+\sum ny_n\p_{y_n}$ of $\g$,
and so a relative invariant has weight $w\lambda$ (or simply $w\in\mathbb{N}$) if the number of differentiations by $x$ in
each its monomial is $w$. Denote the space of relative invariants of weight $w$ by $\mathcal{R}^w$.
Weights of the above invariants $R_n$ are $w=3,8,12,16,20$, respectively (subscript $n$ denotes the order).

 \begin{theorem}
The field of rational absolute differential invariants is generated by the differential invariant $I_5= \frac{R_5^3}{R_3^8}$ and the invariant derivation $\nabla=\frac{R_5}{R_3^3} D_x$.
 \end{theorem}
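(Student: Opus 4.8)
The plan is to verify directly that $I_5$ is an absolute rational differential invariant and that $\nabla$ is an invariant derivation, and then to show by a dimension count that $I_5$ together with its iterated $\nabla$-derivatives exhausts the field. First I would record the two weight computations. Since $R_3$ and $R_5$ have weights $3$ and $8$ and the weight is additive, $I_5=R_5^3/R_3^8$ has weight $3\cdot 8-8\cdot 3=0$, so $X^{(\infty)}(I_5)=0$ for all $X\in\g$ and $I_5$ is a genuine rational absolute invariant of order $5$. For the derivation I would use the identity $[X^{(\infty)},D_x]=-(D_x\xi)\,D_x$, where $\xi$ is the $\p_x$-component of $X$, giving $[X^{(\infty)},fD_x]=\bigl(X^{(\infty)}(f)-f\,D_x\xi\bigr)D_x$. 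With the normalization $\lambda(X)=-D_x\xi$ (consistent with $\lambda(X_5)=1$ for $\xi=-x$), the operator $fD_x$ is $\g$-invariant precisely when $f$ is a relative invariant of weight $-1$. Since $R_5/R_3^3$ has weight $8-9=-1$, the operator $\nabla=\tfrac{R_5}{R_3^3}D_x$ is an invariant derivation, and consequently every $\nabla^{j}I_5$ is again a rational absolute differential invariant.

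The next step is the dimension count that pins down the transcendence degree. As $\g\cong\mathfrak{sl}(2)\ltimes\mathfrak{heis}(3)$ has dimension $6$, is transitive on $J^1$, and is not of the exceptional $\op{Sol}(2)\ltimes\C^k$ type, its prolonged action becomes free on $J^s$ with $s=\dim\g-2=4$. Since $\dim J^4=6=\dim\g$, the generic orbit in $J^4$ is open, so there are no absolute differential invariants of order $\le 4$; and for $k\ge 5$ the generic orbit stays $6$-dimensional, whence the field of rational differential invariants of order $\le k$ has transcendence degree $\dim J^k-6=k-4$. Thus exactly one new independent invariant appears at each order $k\ge 5$, the first being $I_5$.

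I would then establish functional independence by a leading-order analysis. Because $D_x$ raises order by one and $\tfrac{R_5}{R_3^3}$ has order $5$, applying $\nabla$ to an invariant of order $m\ge 5$ produces an invariant of order $m+1$ that is affine in $y_{m+1}$ with leading coefficient $\tfrac{R_5}{R_3^3}\,\p_{y_m}(\cdot)$. Starting from $I_5$, for which $\p_{y_5}I_5\not\equiv 0$, an induction shows that $\nabla^{j}I_5$ has order exactly $5+j$ with nonvanishing $\p_{y_{5+j}}$-derivative. Hence the Jacobian of $(I_5,\nabla I_5,\dots,\nabla^{k-5}I_5)$ with respect to $(y_5,\dots,y_k)$ is triangular with nonzero diagonal, so these $k-4$ invariants are functionally independent and form a transcendence basis of the order-$\le k$ rational invariants.

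Finally, since a transcendence basis need not generate, the crux is to upgrade functional independence to generation of the whole field, and I expect this to be the main obstacle. Here I would use that the freeness established above, together with the transitivity and algebraicity hypotheses recalled in \S\ref{rode}, guarantees a rational cross-section, so by Rosenlicht's theorem the field of rational differential invariants of order $\le k$ is the function field of the rational quotient and is generated by the invariantizations of the jet coordinates; the recurrence relations between $\nabla$ and invariantization then express each of these as a rational function of $I_5,\nabla I_5,\dots$. Equivalently, in the language of \S\ref{rode}, one shows that $\C(I_5,\nabla I_5,\dots)$ admits no algebraic extension inside the field of rational invariants, because the common level sets of these invariants coincide generically with single $\g$-orbits. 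Combining this with the order-by-order transcendence-degree count yields that $I_5$ and $\nabla$ generate the entire field of rational absolute differential invariants, which is the assertion.
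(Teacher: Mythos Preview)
Your argument is correct and follows essentially the same template the paper uses for the analogous results it does prove in detail (e.g.\ Theorem~\ref{th:InvarMinkowski}): verify invariance infinitesimally via the weight calculus, count transcendence degrees using freeness of the prolonged action, and then argue that no algebraic extension of the generated field exists inside the field of rational invariants. The paper itself does not give a proof of this particular theorem---it is stated with a reference to \cite[Table~5 \#4.1]{O} and the global Lie--Tresse theorem \cite{KL}---so your write-up is in fact more explicit than what the paper provides.

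One point worth tightening is the base case of the generation step. Your inductive ``affine in $y_{m+1}$'' argument works cleanly for orders $\ge 6$, but at order $5$ the invariant $I_5=R_5^3/R_3^8$ is cubic in $y_5$, so that mechanism does not directly show $\C(I_5)$ equals the full field of order-$5$ rational invariants. The paper's template for the analogous statements handles this by noting that linearity on fibers reduces the problem to a direct computation at the lowest nontrivial order; here the cleanest fix is either to invoke L\"uroth's theorem (the order-$5$ rational quotient is a curve, hence rational, and one checks that a generic fibre of $I_5$ is a single orbit) or to carry out that direct check once. Your appeal to Rosenlicht plus a rational cross-section and recurrence relations is also valid and covers all orders at once, but it imports more machinery than the paper's more elementary fibre-by-fibre reduction.
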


With these data one can generate additional algebraically independent invariants, one in each order $>5$.
For example, we have the following invariant of order 6:
 \[
\frac{\nabla(I_5)}{I_5} = \frac{9 y_3^2 y_6-45 y_3 y_4 y_5 +40 y_4^3}{y_3^4}.
 \]
The rational absolute differential invariants separate $\g$-orbits in general position in $J^k$.
In this sense, generic scalar ODE of order $k$ is given by a function of absolute invariants:
 \begin{equation}
f(I_5, \nabla(I_5),\dots,\nabla^{k-5}(I_5))=0. \label{eq:genericcontact}
 \end{equation}
The function $f$ in \eqref{eq:genericcontact} is either rational or analytic, depending on the setup.

The $\mathfrak g$-action is transitive on $J^2$. On $J^3$ there is one 4-dimensional orbit $\Sing^3=\{R_3=0\}$,
and its complement is a 5-dimensional orbit. For $k\geq 4$ the orbits in $J^k$ are 6-dimensional outside the set
$\Sing^k=\pi_{k,3}^{-1}(\Sing^3)$.
The only invariant ODE lying inside $\Sing^k$ is the one given by $R_3=0$
together with its total derivatives. Outside $\Sing^k$, invariant ODEs are given by \eqref{eq:genericcontact}.

 \begin{cor} \label{cor:contact}
Up to a contact transformation every scalar ODE of order greater than 2 with essentially contact symmetry algebra is either
$R_3=0$ or is given by formula \eqref{eq:genericcontact}.
 \end{cor}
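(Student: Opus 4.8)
The plan is to assemble the corollary from the preceding Proposition and the orbit analysis established just above. Let $\E = \{F = 0\} \subset J^k$ be a scalar ODE of order $k > 2$ whose full symmetry algebra is essentially contact, that is, irreducible. By the preceding Proposition this algebra contains, after a suitable contact transformation, the subalgebra $\g$ of \eqref{L1}. Performing this transformation once and for all, I may assume $\g$ lies in the symmetry algebra of $\E$; since each symmetry preserves $\E$, the hypersurface $\E$ is in particular $\g$-invariant, i.e.\ a union of $\g$-orbits. The whole problem thus reduces to listing the $\g$-invariant order-$k$ ODEs, and here I would feed in the two structural inputs proved above: the Sophus Lie theorem and the description of the field of rational absolute invariants as the differential field generated by $I_5 = R_5^3/R_3^8$ and $\nabla = \frac{R_5}{R_3^3} D_x$.

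By the Sophus Lie theorem an invariant $\E$ is either cut out by a fundamental relative invariant $R_j$ or expressed through absolute invariants as $F = f(I_1, \dots, I_r)$. In the second case the field-generation theorem immediately turns this into $f(I_5, \nabla(I_5), \dots, \nabla^{k-5}(I_5)) = 0$, which is \eqref{eq:genericcontact}: the $k-4$ invariants $I_5, \nabla(I_5), \dots, \nabla^{k-5}(I_5)$, of orders $5, \dots, k$, match the codimension $k-4$ of the generic $6$-dimensional orbits in $J^k$ and separate them. It therefore remains to sort out the relative-invariant candidates $R_3 = 0, R_5 = 0, R_6 = 0, \dots$. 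Here I would use the orbit stratification: $R_3 = 0$ defines the singular stratum $\Sing^k = \pi_{k,3}^{-1}(\{R_3 = 0\})$ and is, as recorded above, the unique invariant ODE contained in $\Sing^k$. Every higher $R_j$ ($j \geq 5$), on the other hand, defines a hypersurface whose generic part lies in $J^k \setminus \Sing^k$, where it coincides with the vanishing of an absolute invariant — for instance $\{R_5 = 0\} \cap \{R_3 \neq 0\} = \{I_5 = 0\}$ — and is thus already an instance of \eqref{eq:genericcontact}. This leaves exactly the two announced forms, $R_3 = 0$ and \eqref{eq:genericcontact}.

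The main obstacle is the completeness of the second step: one must be certain that no invariant equation escapes the family \eqref{eq:genericcontact} through a higher relative invariant, and that the listed invariants genuinely generate and separate the orbits in the open stratum rather than merely forming a transcendence basis. The decisive inputs are the dimension count (generic codimension $k-4$ matched by the $k-4$ independent invariants) together with the finite-generation statement, which guarantees that each higher $R_j$ resolves into a rational expression in $I_5$ and its $\nabla$-iterates off $\Sing^k$. A minor point worth recording is that $R_3 = 0$ is genuinely of order $3$ (so that $F_{y_k} \equiv 0$ for $k > 3$, and the normality assumption on order-$k$ ODEs fails), which is precisely why it is listed separately rather than folded into \eqref{eq:genericcontact}; one should also check that the contact transformation of the first step preserves the rationality of all invariants, as guaranteed by the rationality remark in \S\ref{rode}.
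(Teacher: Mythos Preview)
Your argument is correct and follows essentially the same route as the paper: reduce via the Proposition to $\g$-invariance, then invoke the orbit dichotomy (equivalently, the Sophus Lie theorem) to split into the singular stratum $\Sing^k$, where only $R_3=0$ survives, and its complement, where the field-generation theorem yields \eqref{eq:genericcontact}. Your additional remark that each higher $R_j$ (for $j\ge5$) is absorbed into the absolute-invariant description on $\{R_3\neq0\}$ is a useful clarification that the paper leaves implicit.
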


In particular, there is only one ODE of order 3 with essentially contact symmetry algebra, namely $y_3=0$,
while there are no such ODEs of order 4. A connected component of a fifth-order ODE $f(I_5)=0$
is given by $I_5=c$ for some $c\in\C$, with  Zariski closure $R_5^3=c R_3^8$, and the constant $c$ can be
normalized to either $0$ or $1$ by rescaling $y$. Therefore, up to contact equivalence,
there are only three (connected) ODEs of order $3, 4, 5$ with essentially contact symmetry algebra:
 \begin{equation}\label{ODe1}
y_3=0, \qquad 3 y_3 y_5-5 y_4^2=0, \qquad (3 y_3 y_5-5 y_4^2)^3=y_3^8.
 \end{equation}
Their full symmetry algebras are $\mathfrak{sp}(4)$,
$\mathfrak{gl}(2)\ltimes\mathfrak{heis}(3)=\mathfrak{p}_1\subset\mathfrak{sp}(4)$ and
$\mathfrak{sl}(2)\ltimes\mathfrak{heis}(3)\subset\mathfrak{p}_1$, respectively.

 \begin{remark}
This discussion also shows that the description of ODEs with essentially contact symmetry algebra in Corollary \ref{cor:contact} is not minimal: it contains several contact-equivalent ODEs.
 \end{remark}


To get algebraic invariant equations in simpler terms, we now describe relative differential
invariants of Lie algebra \eqref{L1}.

 \begin{theorem}\label{TT1}
The graded algebra $\mathcal{R}=\oplus_{w>0}\mathcal{R}^w$ of relative differential invariants
wrt $\mathfrak g$ is generated by $R_3, R_5$ and the relative invariant derivation
$\nabla_w=y_3 D_x-\tfrac{w}{3} y_4:\mathcal{R}^w\to\mathcal{R}^{w+4}$ in the following sense:
any $R\in\mathcal{R}$ is a polynomial combination of $R_3$, $R_5$ and their invariant derivations,
possibly divided by a power of $R_3$.
 \end{theorem}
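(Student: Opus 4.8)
My plan is to recast the statement as a ring inclusion and reduce everything to the already-established description of the absolute invariants. Write $\Delta=\tfrac{R_5}{R_3^3}D_x$ for the absolute invariant derivation furnished by the previous theorem, and reserve $\nabla$ (acting as $\nabla_w$ on $\mathcal R^w$) for the relative derivation of the present one. Set $\mathcal B=\C[R_3^{\pm1},R_5,\nabla R_5,\nabla^2R_5,\dots]$. Since $\nabla R_3=y_3y_4-y_4y_3=0$ and $\nabla R_5=\tfrac13R_6$, all the $R_n$ lie in $\mathcal B$, and the theorem is equivalent to the inclusion $\mathcal R\subseteq\mathcal B$: collecting the $\mathcal B$-expression of a relative invariant over a common denominator presents it as a polynomial in $R_3,R_5,\nabla R_5,\dots$ divided by a power of $R_3$. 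A preliminary normalization (invariance under $\p_x,\p_y$ and the lowest Heisenberg fields, together with transitivity of $\g$ on $J^2$) reduces us to $\mathcal R\subseteq\C[y_3,y_4,\dots]$ with each $\mathcal R^w$ finite dimensional. I would first record that $\nabla$ maps $\mathcal R^w$ into $\mathcal R^{w+4}$: it obeys the graded Leibniz rule, and its relative invariance follows either by a direct check on the generators via $[X^{(\infty)},D_x]=-D_x(\xi_X)D_x$, or from the identity $\nabla_w(S)=\tfrac{R_3^{\,m+4}}{R_5}\,\Delta(S/R_3^{m})$ for $w=3m$, the case $3\nmid w$ reducing to it by applying $\nabla$ to $S^3$.

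The heart of the argument is a reduction to absolute invariants. Given $R\in\mathcal R^w$ of order $k$, the quotient $R^3/R_3^{\,w}$ is a weight-zero rational differential invariant of order $k$, hence by the previous theorem equals $\phi(I_5,\Delta I_5,\dots,\Delta^{k-5}I_5)$ for some rational $\phi$. I claim $\phi$ is a polynomial. Indeed $R^3/R_3^{\,w}$ has poles only along $\Sing=\{R_3=0\}$, whereas if $\phi=P/Q$ in lowest terms with $\deg Q>0$ then, since $(I_5,\dots,\Delta^{k-5}I_5)$ is a transcendence basis of the order-$\le k$ absolute invariants and is already dominant onto $\C^{k-4}$ on $\{y_3\neq0\}$, the hypersurface $\{Q=0\}$ pulls back to a pole locus of $R^3/R_3^{\,w}$ meeting $\{y_3\neq0\}$, contradicting that the only poles lie on $\Sing$. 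Next I would check that $\mathcal B$ is $\nabla$-stable (it suffices to test the generators), and since in weight zero $\Delta=\tfrac{R_5}{R_3^4}\nabla$, the subring $\mathcal B^0$ is $\Delta$-stable; as $I_5=R_5^3/R_3^8\in\mathcal B^0$ this yields $\Delta^jI_5\in\mathcal B$ for all $j$. Substituting then gives $R^3=R_3^{\,w}\,\phi(I_5,\dots)\in\mathcal B$.

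It remains to upgrade $R^3\in\mathcal B$ to $R\in\mathcal B$. The generators $R_3,R_5,\nabla R_5,\dots$ are algebraically independent, because $\nabla^{n-5}R_5$ has order exactly $n$ and the Jacobian with respect to $y_3,y_5,y_6,\dots$ is triangular and nonsingular on $\{y_3\neq0\}$; hence $\mathcal B$ is a localization of a polynomial ring, a UFD, and in particular integrally closed. On the other hand $\mathcal R$ is a graded domain whose fraction field $K$ has degree-zero part the field of rational absolute invariants, and each nonzero graded component one-dimensional over it (two relative invariants of equal weight differ by an absolute one); the weights include $3$ and $8$, so the grading group is all of $\mathbb Z$ and $K=\operatorname{Frac}(\mathcal B)$. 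Thus $R\in K=\operatorname{Frac}(\mathcal B)$ while $R^3\in\mathcal B$, so $R$ is integral over the integrally closed ring $\mathcal B$ and therefore lies in $\mathcal B$, as required.

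The hard part is controlling the denominators, that is, showing that neither $R_5$ nor any higher $\nabla^jR_5$ can occur in a denominator. Naive weight-matching is of no help here, since $\gcd(3,8)=1$ forces one either to extract a cube root or to divide by $R_5$ when building an invariant of a prescribed weight from $R_3$ and $R_5$. The two devices that resolve this are precisely the pole/orbit argument, which forces $\phi$ to be a polynomial so that no spurious denominators are introduced, and the integral-closedness of $\mathcal B$, which legitimizes the cube root; together they confine every denominator to a power of $R_3$.
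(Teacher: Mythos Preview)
Your proof is correct and considerably more detailed than the paper's, which is essentially a two-line sketch: the paper computes the Lie determinant to be $36y_3^3$, observes that for $k\ge4$ the prolonged action is free on a Zariski-open set so that absolute and relative invariants are in bijection (one generator per order), and then invokes the ``stabilization of singularities'' result from \cite{KL} to conclude that the only denominator occurring in the generators of absolute invariants is a power of $R_3$. Your argument replaces this appeal to external theory by two purely algebraic devices: a dominance/pole argument showing that the expression $\phi$ of $R^3/R_3^{\,w}$ in the generators $I_5,\Delta I_5,\dots$ must be a polynomial (since any nontrivial denominator would produce poles off $\{y_3=0\}$), and then the integral closedness of the UFD $\mathcal B=\C[R_3^{\pm1},R_5,\nabla R_5,\dots]$ to pass from $R^3\in\mathcal B$ to $R\in\mathcal B$. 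The paper's route is shorter but leans on a general structural theorem; yours is self-contained and makes explicit exactly why neither $R_5$ nor any higher $\nabla^jR_5$ can appear in a denominator, which the paper's proof leaves implicit in the citation.
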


The term ``graded algebra'' above means that $\mathcal{R}^w$ are vector spaces and
$\mathcal{R}^{w_1} \cdot\mathcal{R}^{w_2} \subset\mathcal{R}^{w_1+w_2}$,
but only homogeneous elements of $\mathcal{R}$ are relative invariants.
The algebra $\mathcal{R}$ is filtered by the jet-order, and for the filtrand $\mathcal{R}_n$
of order $\leq n$ invariants we have: $\mathcal{R}_n=\oplus_{w>0}\mathcal{R}_n^w$.

 \begin{proof}
The Lie determinant of $\mathfrak g$, obtained from the $6\times6$ matrix having
the coefficients of the 4th prolongation of a basis in $\g$ as entries,
is equal to $36y_3^3$. Starting from order 4 the action is
free in a Zariski-open set and there is a bijection between absolute and
relative invariants, one generator in each order.

Any absolute differential invariant is a rational function of the basic one and its invariant derivatives,
but due to stabilization of singularities (see \cite{KL}) the generators for absolute differential invariants can be chosen to have a power of $R_3$ as denominator.
 \end{proof}

The relative invariant derivation can be expressed through the absolute invariant derivation
 \[
\nabla_w(R)=\frac{R_3^{4+w/3}}{R_5}\,\nabla\left(\frac{R}{R_3^{w/3}}\right)
 \]
and the above invariants are generated from $R_3,R_5$:
 $$
\nabla_3 R_3=0,\quad \nabla_8 R_5=\tfrac13R_6,\quad
\nabla_{12} R_6=R_7-5R_5^2,\quad \nabla_{16} R_7=R_8-\tfrac73R_5R_6.
 $$

 \begin{remark}
We can relate these relative invariants to the differential operators from Introduction:
(i) $R_5=L_4(y_1)$;
(ii) $R_6=L_5(y_1)$;
(iii) $L_7(y)=\frac19(10R_7-49R_5^2)$.
 \end{remark}

If we require the equation $\E$ to be algebraic, then $f$ in \eqref{eq:genericcontact} is rational, and
the Zariski closure of the ODE is given by a polynomial equation. Hence we conclude:

 \begin{cor}
The irreducible algebraic ODEs with essential contact symmetry are given by the  homogeneous elements of $\mathcal{R}$.
 \end{cor}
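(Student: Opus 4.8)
The plan is to pass from the invariant hypersurface to its defining polynomial and to show that this polynomial is, up to a nonzero scalar, a homogeneous element of $\mathcal{R}$. By the Proposition above, a scalar ODE $\E=\{F=0\}\subset J^k$ (with $k>2$) whose full contact symmetry algebra is essentially contact has, after a local contact transformation, symmetry algebra containing the subalgebra $\mathfrak{g}$ from \eqref{L1}; working in such coordinates, $\E$ is $\mathfrak{g}$-invariant. When $\E$ is algebraic and irreducible we may take $F\in\mathcal{O}(J^k)$ to be an irreducible polynomial in the jet coordinates cutting out $\E$, and the problem reduces to proving $F\in\mathcal{R}^w$ for some weight $w$.

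First I would show that an irreducible polynomial with $\mathfrak{g}$-invariant zero locus is a relative differential invariant. Invariance of $\{F=0\}$ under the flow of each $X\in\mathfrak{g}$ forces $X^{(k)}(F)$ to vanish on $\{F=0\}$; since $F$ is irreducible, the Nullstellensatz gives $F\mid X^{(k)}(F)$, so $X^{(k)}(F)=\lambda(X)\,F$ for a cofactor $\lambda(X)$. Because the chosen basis of $\mathfrak{g}$ is homogeneous for the weight grading and the space of weights is one-dimensional, generated by the cocycle $\lambda(X)=-L_X(\alpha)/\alpha$ with $\alpha=dx$, the cofactor is forced to be the scalar weight, so $F$ is a genuine relative invariant. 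Evaluating the identity on the grading element, whose prolongation is $X_5^{(\infty)}=-x\p_x+\sum_n n\,y_n\p_{y_n}$ with $\lambda(X_5)=w$, gives $X_5^{(k)}(F)=w\,F$, which is precisely the statement that $F$ is homogeneous of weight $w$.

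It remains to locate $F$ inside the generated algebra and to treat the singular locus. By Theorem \ref{TT1}, every relative invariant is, up to a power of $R_3$, a polynomial in $R_3,R_5$ and their $\nabla_w$-derivatives. Since $F$ is irreducible, either $F=R_3$ -- the unique invariant equation supported on $\Sing^k=\{R_3=0\}$, as recorded just before Corollary \ref{cor:contact} -- or $F$ is coprime to $R_3$ and is then a homogeneous polynomial element of $\mathcal{R}$. In both cases $F$ is a homogeneous element of $\mathcal{R}$, giving one inclusion. For the converse, any homogeneous $R\in\mathcal{R}^w$ is by definition a relative invariant, so $\{R=0\}$ is $\mathfrak{g}$-invariant and algebraic; by the divisibility argument its irreducible factors are again relative invariants, each cutting out an irreducible algebraic ODE with essentially contact symmetry. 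This establishes the stated characterization.

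The step I expect to be the main obstacle is upgrading the divisibility $F\mid X^{(k)}(F)$ to the relative-invariant identity $X^{(k)}(F)=\lambda(X)\,F$ with $\lambda$ in the one-dimensional weight space, rather than a jet-dependent cofactor. The resolution is a weight count: $X^{(k)}$ shifts the weight grading by the fixed weight of the homogeneous generator $X$, so the cofactor $X^{(k)}(F)/F$ is weight-homogeneous of that fixed weight, and comparison with the generating cocycle pins it down as a scalar. The same bookkeeping confines any extraneous invariant factor of $F$ to $\{R_3=0\}$, where irreducibility leaves only $F=R_3$, completing the singular case.
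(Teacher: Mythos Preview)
Your approach is different from the paper's, which derives this corollary in one line from the preceding classification: by Corollary~\ref{cor:contact} every such ODE is either $R_3=0$ or of the form $f(I_5,\nabla I_5,\dots)=0$, and when $\E$ is algebraic the function $f$ must be rational; clearing denominators (which are powers of $R_3$ and $R_5$) yields a polynomial relation among the $R_n$, i.e.\ a homogeneous element of $\mathcal{R}$. Your Nullstellensatz route is more self-contained---it does not invoke the description via absolute invariants---and is a perfectly reasonable alternative.

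There is, however, a genuine gap in your argument at exactly the point you flag. You want to conclude that the cofactor $\mu(X)=X^{(k)}(F)/F$ is scalar-valued, and you propose to do this by a ``weight count'': since $X^{(k)}$ shifts the weight grading, the cofactor is weight-homogeneous and hence pinned down by the one-dimensional weight lattice. But this reasoning presupposes that $F$ itself is weight-homogeneous, which is precisely what you are trying to prove; for a general polynomial $F=\sum_w F_w$ the quotient $X^{(k)}(F)/F$ need not be weight-homogeneous at all. Appealing to the one-dimensionality of the weight space does not help either: that statement concerns scalar-valued cocycles, whereas a priori $\mu$ takes values in functions on $J^k$, and weight-$0$ polynomials such as $y$ or $xy_1$ are not constants.

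The repair is easy and purely elementary. Observe that $X_5^{(k)}=-x\partial_x+\sum_n n\,y_n\partial_{y_n}$ preserves the \emph{total} polynomial degree in $(x,y,y_1,\dots,y_k)$, so $\deg X_5^{(k)}(F)\le\deg F$ and hence $\mu(X_5)$ is a constant $w$. This already gives $X_5^{(k)}(F)=wF$, i.e.\ $F$ is weight-homogeneous, which by definition places $F$ in $\mathcal{R}^w$. (Similar degree considerations applied to $\partial_x,\partial_y$ and their companions show $F$ is independent of $x,y,y_1,y_2$, so in fact all cofactors are scalar, but you do not need this for the corollary.) With this fix in place your argument goes through; the discussion of the singular locus and of Theorem~\ref{TT1} in your third paragraph is then superfluous, since once $F\in\mathcal{R}^w$ the conclusion is immediate.
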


We can effectively describe elements of $\mathcal{R}$.
For instance, let us derive all invariant ODEs of order $n=5$. Relative invariants of weight $w=3s+8t$
have the form $\sum_{-t/3< i\leq s/8}c_i R_3^{s-8i}R_5^{3i+t}$. This
factorizes into a product of terms of type $R_3$, $R_5$ and
$R_5^3-cR_3^8$ (and $c\neq0$ normalizes to $c=1$).
This yields the already observed invariant ODEs \eqref{ODe1}.

For higher order ODEs the situation is more complicated.
The above normalization is due to the PID property for polynomials in one
variable. This property fails for polynomials with more variables, and
for any $n>5$ a generic polynomial in $\mathcal{R}^w_n$ is irreducible.
In particular, there are infinitely many non-equivalent ODEs of orders
$n>5$ with essentially contact symmetry.

For instance, let us construct relative invariants using homogeneous combinations of $R_3$, $R_5$  and $R_6$:
 \[
\sum_{3r+8s+12t=w} C_{rst} R_3^{r} R_5^s R_6^t.
 \]
These combinations can factorize with a power of $R_3$ as one of the factors.
For example, we have
 \[
64 R_5^3+45 R_6^2= 9R_3 R_6',
 \]
where the second factor is a relative invariant of weight $w=21$
 \[
R_6'=45 y_3^3 y_6^2-450 y_3^2 y_4 y_5 y_6+192 y_3^2 y_5^3+400 y_3 y_4^3 y_6+165 y_3 y_4^2 y_5^2-400 y_4^4 y_5
 \]
and it is the numerator of the absolute invariant
 \[
64 I_5+5 \left(\frac{\nabla(I_5)}{I_5} \right)^2.
 \]
The invariant $R_6'$ is not generated algebraically by $R_3, R_5$ and $R_6$, but it appears via localization
(division by $R_3$). This is precisely what we observed in Theorem \ref{TT1}.


\subsection{ODEs with essentially point symmetries} \label{sect:point}

In this section we consider ODEs whose Lie algebra of symmetries is essentially point (and not essentially fiber-preserving). From the discussion in \S\ref{rode} we obtain the following description for ODEs of order at least 2.
Again, the restriction on the order is to assure finite-dimensionality of the symmetry algebra.

 \begin{prop}
Assume that the symmetry algebra of a scalar ODE of order $k\ge2$ is point and primitive.
Then it contains a Lie subalgebra $\g$ that, up to a local point transformation, is given by formula \eqref{L2}.
 \end{prop}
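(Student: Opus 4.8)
The plan is to show that any primitive, point Lie algebra of symmetries must contain a subalgebra conjugate to the realization \eqref{L2} of $\mathfrak{saff}(2)=\mathfrak{sl}(2)\ltimes\C^2$. The key input is Lie's classification of finite-dimensional primitive Lie algebras of point vector fields on $\C^2$, already invoked in the excerpt: up to local point transformation there are exactly three such algebras, namely the realizations of $\mathfrak{saff}(2)$, $\mathfrak{aff}(2)=\mathfrak{p}_1(A_2)$, and $\mathfrak{sl}(3)=A_2$, and these form the inclusion chain $\mathfrak{saff}(2)\hookrightarrow\mathfrak{aff}(2)\hookrightarrow\mathfrak{sl}(3)$ depicted in Figure \ref{fig:2Dinclusions}. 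So the first step is simply to reduce the problem to this trichotomy.

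Next I would argue that the symmetry algebra $\g$ of the ODE is a \emph{finite-dimensional} primitive point Lie algebra of vector fields on $\C^2$, so that Lie's list applies. Finiteness follows from the hypothesis $k\ge 2$ together with the order restriction discussed in the excerpt (and justified in Appendix \ref{ApA}); primitivity is exactly the stated hypothesis, meaning $\g$ preserves no $1$-dimensional foliation on $J^0=\C^2$. Being a point symmetry algebra, $\g$ consists of honest vector fields on $\C^2$ (prolonged to $J^k$), so it is literally one of the three algebras on Lie's list, placed at an appropriate point in the inclusion chain.

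The crucial observation is then that the smallest algebra on the list, the realization \eqref{L2} of $\mathfrak{saff}(2)$, is contained in each of the other two. Since every finite-dimensional primitive point Lie algebra on $\C^2$ is conjugate to one of the three, and since all three contain the $5$-dimensional subalgebra \eqref{L2} (this is precisely the content of the left-hand and right-hand inclusions $\mathfrak{saff}(2)\hookrightarrow\mathfrak{aff}(2)$ and $\mathfrak{saff}(2)\hookrightarrow\mathfrak{sl}(3)$ in Figure \ref{fig:2Dinclusions}), the conjugating point transformation that brings $\g$ to its normal form on Lie's list simultaneously carries \eqref{L2} into $\g$. Hence $\g$ contains a subalgebra that, up to a local point transformation, equals \eqref{L2}.

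The main obstacle, and the only genuinely nontrivial point, is verifying that the realizations on Lie's list really do contain the realization \eqref{L2} as written — i.e.\ that the abstract inclusions of Figure \ref{fig:2Dinclusions} are realized by the \emph{specific} vector-field normal forms, in compatible coordinates. This is a direct but coordinate-sensitive check: one exhibits \eqref{L2} explicitly inside Lie's normal forms for $\mathfrak{aff}(2)$ and $\mathfrak{sl}(3)$ (standard realizations reviewed in Appendix \ref{ApB}), confirming that the five generators $\p_x,\p_y,x\p_y,-x\p_x+y\p_y,y\p_x$ all appear. As emphasized in Remark \ref{rk:realization}, it is the realization and not merely the abstract structure that matters here, so this compatibility of realizations is exactly where care is needed; everything else is an immediate appeal to Lie's classification.
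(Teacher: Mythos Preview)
Your proposal is correct and follows essentially the same route as the paper: the statement is derived directly from Lie's classification of finite-dimensional primitive point Lie algebras on $\C^2$ (three items, all containing \eqref{L2}), together with the finite-dimensionality of the symmetry algebra guaranteed by $k\ge2$ via Appendix~\ref{ApA}. One small correction: the explicit $\C^2$ realizations of $\mathfrak{aff}(2)$ and $\mathfrak{sl}(3)$ you need to check the inclusion are in \cite[p.~124]{L2} or \cite[Table~2]{O}, not in Appendix~\ref{ApB}, which treats vector fields on $\C^3$.
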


Generators for the field of rational absolute differential invariants with respect to $\mathfrak g$ can be given in terms of the relative differential invariants (see also \cite[Table~5 \#2.1]{O}). The simplest of those are:
 \begin{align*}
R_2 &= y_2,  &\qquad R_4 &= 3y_2y_4-5y_3^2, \\
R_5 &= 9y_2^2y_5-45y_2y_3y_4+40y_3^3,  &\qquad
R_6 &= 9y_2^3y_6-63y_2^2y_3y_5+105y_2y_3^2y_4-35y_3^4,\\
R_7 &= 9y_2^4y_7-84y_2^3y_3y_6+210y_2^2y_3^2y_5- &&\hspace{-50pt}
105y_2^2y_3y_4^2+210y_2y_3^3y_4-280y_3^5.
 \end{align*}

The space of weights is again one-dimensional but now it is generated
by the cocycle $\lambda(X)=\frac12L_X(\beta)/\beta$, where $\beta=dx\wedge dy$.
Note that $\lambda(X_4)=1$ for the 4th basis element $X_4=-x\p_x+\sum (n+1)y_n\p_{y_n}$ of $\g$,
and so a relative invariant has weight $w\lambda$ (or simply $w\in\mathbb{N}$) if
the number of differentiations by $x$ plus the number of $y$ in each its monomial is $w$. The space of
relative invariants with this weight $w$ will be denoted by $\mathcal{R}^w$.
The weights of the above relative invariants $R_n$ are $3,8,12,16,20$, respectively (index $n$ is the order).

 \begin{theorem}\label{TT2}
The field of rational absolute differential invariants is generated by the differential invariant
$I_4 = \frac{R_4^3}{R_2^8}$ and the invariant derivation $\nabla=\frac{R_4}{R_2^3} D_x$.
 \end{theorem}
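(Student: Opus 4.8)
The plan is to follow exactly the strategy used for Theorem~\ref{TT1}, now with the $5$-dimensional algebra $\mathfrak g=\mathfrak{saff}(2)$ of \eqref{L2} in place of \eqref{L1}. Since $\mathfrak g$ contains $\mathfrak{sl}(2)$ it is not of the exceptional type $\mathfrak{sol}(2)\ltimes\mathbb C^k$, so no pseudo-stabilization occurs and freeness of the prolonged action is reached on $J^s$ with $s=\dim\mathfrak g-2=3$. First I would compute the Lie determinant on $J^3$: forming the $5\times5$ matrix whose rows are the coefficients of $X_1^{(3)},\dots,X_5^{(3)}$ in the frame $\partial_x,\partial_{y_0},\partial_{y_1},\partial_{y_2},\partial_{y_3}$ and expanding (the columns $\partial_{y_2},\partial_{y_3}$ are supported only on the rows coming from $X_4$ and $X_5$, so a Laplace expansion along these two columns is immediate, the complementary $3\times3$ block being unitriangular), one obtains that the determinant equals $-9\,y_2^3$, i.e.\ a nonzero constant times $R_2^3$.

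From this the rest is formal. The only irreducible factor of the Lie determinant is $R_2=y_2$, so the prolonged action is free on the Zariski-open set $\{R_2\neq0\}$ and on its preimages in all higher jets. Since $\mathfrak g$ is transitive on $J^2$ and acts freely (hence transitively on an open set) on $J^3$, there are no absolute invariants of order $\le3$. For $k\ge4$ we have $\dim J^k=k+2>5$, the generic orbits are $5$-dimensional, and there is a bijection between absolute and relative invariants giving exactly one new functionally independent generator in each order $\ge4$. The lowest of these is the order-$4$ invariant $I_4=R_4^3/R_2^8$, of weight $3\cdot8-8\cdot3=0$ and hence genuinely absolute. Matching weights also shows that $\nabla=\frac{R_4}{R_2^3}D_x$ has weight $0$ (the factor $R_4/R_2^3$ has weight $8-9=-1$, while $D_x$ raises weight by $1$) and carries invariants to invariants; applied repeatedly to $I_4$ it produces one independent invariant in each successive order, so $\{\nabla^j(I_4)\}_{j\ge0}$ is a transcendence basis for the field of rational absolute invariants.

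The substantive part, exactly as in Theorem~\ref{TT1}, is to upgrade this transcendence basis to a set of generators, and two points need care. First I must control the denominators: by the stabilization-of-singularities argument of \cite{KL} every rational absolute invariant can be written with only a power of $R_2$ in the denominator, which is precisely what localizing at the single factor $R_2$ of the Lie determinant provides. Second I must verify that the differential field generated by $I_4$ and $\nabla$ has no proper algebraic extension inside the field of rational invariants; as before, freeness of the action on $\{R_2\neq0\}$ guarantees that the rational invariants separate generic orbits, so the generated field is already algebraically closed in the invariant field and therefore equals it. I expect this last verification---ruling out hidden algebraic invariants not rationally generated by $I_4$ and its $\nabla$-derivatives---to be the main obstacle, just as in the contact case.
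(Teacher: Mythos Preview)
Your approach is correct and mirrors the paper's methodology; the paper does not give an explicit proof of this theorem but relies on the general framework of \S\ref{rode}, the reference to \cite[Table~5 \#2.1]{O}, and the detailed argument already given for Theorem~\ref{TT1}. Your Lie determinant computation $-9y_2^3$ is right, and the dimension count giving one new invariant per order starting at $4$ is the heart of the matter.

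Two points need correction. First, a minor slip: $\mathfrak g$ is \emph{not} transitive on $J^2$; there are two orbits, $\{R_2=0\}$ and its open complement. What you actually need is that the open orbit is dense (so no nonconstant rational invariant of order $\le2$) and that on $J^3$ the open set $\{R_2\ne0\}$ is a single free orbit (so no invariant of order $3$). Second, and more substantively, your closing argument for ``no algebraic extension'' is a non sequitur: freeness together with Rosenlicht tells you the \emph{full} field of rational invariants separates generic orbits and has the right transcendence degree, but it says nothing about whether your specific subfield generated by $I_4$ and $\nabla$ is all of it rather than a proper subfield over which the full field is algebraic. The paper's way around this (made explicit in the proof of Theorem~\ref{th:InvarMinkowski}) is to note that $\nabla^j(I_4)$ is \emph{linear} on fibers of $J^{4+j}\to J^{3+j}$ for $j\ge1$, which forces the extension to be trivial above order $4$ and reduces the verification to a direct check at order $4$.
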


The $\mathfrak g$-action is transitive on $J^1$. On $J^2$ there is one 3-dimensional orbit $\Sing^2=\{R_2=0\}$,
and its complement is a 4-dimensional orbit. For any $k\ge3$ the orbits in $J^k$ are 5-dimensional outside the
set $\Sing^k=\pi_{k,2}^{-1}(\Sing^2)$. The only invariant ODE lying inside $\Sing^k$ is the one given by the equation $R_2=0$ and its total derivatives. Outside $\Sing^k$, invariant ODEs are given by
 \begin{equation}\label{eq:genericpoint}
f(I_4,\nabla(I_4),\dots, \nabla^{k-4}(I_4))=0.
 \end{equation}
Again, the function $f$ is either rational or analytic, depending on the setup.

 \begin{cor}
Up to a point transformation every ODE of order greater than 1 with essentially point symmetry algebra is either $R_2=0$ or is given by
\eqref{eq:genericpoint}.
 \end{cor}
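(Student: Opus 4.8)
The plan is to run the argument in complete parallel to Corollary \ref{cor:contact}, now for the $\mathfrak g$-action with $\mathfrak g$ given by \eqref{L2}. First I would invoke the Proposition above: since the symmetry algebra of the ODE is point and primitive, after a point transformation it contains the subalgebra $\mathfrak g=\mathfrak{saff}(2)$ of \eqref{L2}, so it suffices to classify the $\mathfrak g$-invariant ODEs of order $k\ge 2$. Any such equation $\mathcal E=\{F=0\}\subset J^k$ is a union of $\mathfrak g$-orbits, as noted in \S\ref{rode}, so the whole problem reduces to understanding the orbit stratification of $J^k$ and deciding which unions of orbits cut out a hypersurface.

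Next I would establish the orbit dimensions. The key facts are that $\mathfrak g$ acts transitively on $J^1$; that in $J^2$ the locus $\Sing^2=\{R_2=0\}$ is a single $3$-dimensional orbit with $4$-dimensional open complement; and that for $k\ge 3$ every orbit outside $\Sing^k=\pi_{k,2}^{-1}(\Sing^2)$ is $5$-dimensional, while $\Sing^k$ remains a proper invariant subvariety. This separation of $J^k$ into the open stratum $J^k\setminus\Sing^k$ and the invariant singular locus $\{R_2=0\}$ is precisely what produces the two alternatives in the statement.

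For the generic alternative I would argue that on the open stratum the rational absolute differential invariants separate orbits in general position. This follows from the transitivity and algebraicity hypotheses verified in \S\ref{rode} together with Rosenlicht's theorem, exactly as recorded there. By Theorem \ref{TT2} the field of these invariants is generated by $I_4$ and the invariant derivation $\nabla$, so the functions $I_4,\nabla(I_4),\dots,\nabla^{k-4}(I_4)$ constitute a complete set of functionally independent invariants of order $\le k$ on $J^k\setminus\Sing^k$. Hence the restriction of $F$ to the open stratum is a function of them, which is exactly \eqref{eq:genericpoint}, the function being rational or analytic according to the chosen setup. In the remaining alternative, where $\mathcal E$ lies entirely inside $\Sing^k$, the only invariant ODE supported there is $\{R_2=0\}$, and since this is itself an irreducible invariant hypersurface one concludes $\mathcal E=\{R_2=0\}$.

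The step I expect to require the most care is the orbit-dimension count together with the uniqueness claim on the singular stratum: one must verify that $\{R_2=0\}$ is the only proper invariant subvariety inside $\Sing^k$ of the correct codimension, ruling out smaller invariant loci that might combine with generic orbits to yield an unexpected invariant hypersurface straddling the two strata. Once this is settled, the remainder is a direct transcription of the contact argument, exploiting the structural parallel recorded in Figure \ref{fig:2Dinclusions} between $\mathfrak{saff}(2)\subset\mathfrak{sl}(3)$ here and $\mathfrak{sl}(2)\ltimes\mathfrak{heis}(3)\subset\mathfrak{sp}(4)$ in \S\ref{sect:contact}.
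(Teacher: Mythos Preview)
Your proposal is correct and follows essentially the same approach as the paper: invoke the Proposition to reduce to $\mathfrak g$-invariance for \eqref{L2}, establish the orbit stratification with $\Sing^k=\pi_{k,2}^{-1}(\{R_2=0\})$, apply Theorem~\ref{TT2} (via the Rosenlicht/Lie argument from \S\ref{rode}) on the open stratum to obtain \eqref{eq:genericpoint}, and check that the only invariant ODE supported in $\Sing^k$ is $R_2=0$. The paper presents exactly this chain of reasoning in the paragraph preceding the corollary, and your identification of the singular-stratum uniqueness as the point requiring care matches the paper's emphasis.
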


Similar to \S\ref{sect:contact} we conclude that the only 2nd order ODE with essentially point symmetry algebra
is trivializable, namely $y_2=0$, while there are no such ODE of order 3. Furthermore, all connected ODEs of order  $2,3,4$ with essentially point symmetry algebra are equivalent to one of the following:
 \begin{equation}\label{ODe2}
y_2=0, \qquad 3 y_2 y_4 - 5 y_3^2 =0, \qquad (3 y_2 y_4-5 y_3^2)^3=y_2^8.
 \end{equation}
Their full symmetry algebras are $\mathfrak{sl}(3)$, $\mathfrak{aff}(2)$ and
$\mathfrak{saff}(2)$, respectively.



To get the algebraic invariant equations in simpler terms, we now describe relative differential
invariants of Lie algebra \eqref{L2}.

 \begin{theorem}
The algebra $\mathcal{R}=\oplus_{k>0}\mathcal{R}^k$ of relative differential invariants
wrt $\g$ is generated by $R_2$, $R_4$ and the relative invariant derivation
$\nabla_w=y_2 D_x-\tfrac{w}{3} y_3:\mathcal{R}^w\to\mathcal{R}^{w+4}$ in the following sense:
any $R\in\mathcal{R}$ is a polynomial combination of $R_2$, $R_4$ and their invariant derivations,
possibly divided by a power of $R_2$.
 \end{theorem}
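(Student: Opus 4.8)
The plan is to follow the template of the proof of Theorem~\ref{TT1}, since the point algebra \eqref{L2} is the exact mirror of the contact algebra \eqref{L1} one dimension lower. The backbone is a single Lie determinant computation that simultaneously identifies the fundamental relative invariant $R_2$, pins down the order at which the prolonged action becomes free, and produces a bijection between absolute and relative invariants. First I would prolong the basis $\{\p_x,\p_y,x\p_y,-x\p_x+y\p_y,y\p_x\}$ to $J^3$; the only nontrivial prolongations are $X_4^{(3)}=-x\p_x+\sum_{n=0}^{3}(n+1)y_n\p_{y_n}$ and $X_5^{(3)}=y\p_x-y_1^2\p_{y_1}-3y_1y_2\p_{y_2}-(3y_2^2+4y_1y_3)\p_{y_3}$. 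As $\dim\g=5$, freeness is expected on $J^3$, and the Lie determinant is the $5\times5$ determinant of the coefficient matrix in the coframe $\{dx,dy,dy_1,dy_2,dy_3\}$. The first three rows are nearly triangular, so the determinant collapses to the minor $\det\!\begin{pmatrix} 3y_2 & 4y_3\\ -3y_1y_2 & -3y_2^2-4y_1y_3\end{pmatrix}=-9y_2^3=-9R_2^3$.

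Since the Lie determinant vanishes precisely on $\{R_2=0\}$, the prolonged action is free on the Zariski-open set $\{R_2\neq0\}$ for every $k\ge3$, and there absolute and relative invariants are in bijection, one generator per order: multiplying a relative invariant of weight $w$ by a suitable power of $R_2$ gives an absolute invariant, and vice versa. In particular, $\nabla_w=y_2D_x-\tfrac{w}{3}y_3$ is manifestly a polynomial operator raising weight by $4$; that it genuinely sends $\mathcal{R}^w$ into $\mathcal{R}^{w+4}$ I would deduce from its relation to the absolute derivation $\nabla=\tfrac{R_4}{R_2^3}D_x$ of Theorem~\ref{TT2},
\[
\nabla_w(R)=\frac{R_2^{4+w/3}}{R_4}\,\nabla\!\left(\frac{R}{R_2^{w/3}}\right),
\]
which follows by a direct computation using $D_x(R_2)=y_3$ (the fractional powers of $R_2$ cancel, and are read formally through $I_4=R_4^3/R_2^8$).

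Finally, by Theorem~\ref{TT2} every rational absolute invariant is a rational function of $I_4$ and its iterated $\nabla$-images. Transporting this back across the bijection, any relative invariant $R$ of weight $w$ equals $R_2^{w/3}$ times such a rational function; rewriting $I_4$ and $\nabla$ through $R_2,R_4$ and the $\nabla_w$ and clearing common factors then presents $R$ as a polynomial in $R_2,R_4$ and their relative derivatives divided by a power of $R_2$. The hard part will be the last clause: ensuring that no denominator beyond a power of $R_2$ is ever needed. This is exactly where the stabilization of singularities of \cite{KL} is invoked --- the only singular orbit in the prolongation tower lies over $\Sing^2=\{R_2=0\}$, so the generators of the field of absolute invariants may be taken with denominators that are pure powers of $R_2$. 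It follows that the localization $\mathcal{R}[R_2^{-1}]$ is generated by $R_2$, $R_4$ and the derivations $\nabla_w$, which is the claim.
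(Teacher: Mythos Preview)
Your proposal is correct and follows essentially the same approach as the paper, which explicitly states that the proof is similar to that of Theorem~\ref{TT1} and omits the details. Your Lie determinant computation $-9R_2^3$, the consequent freeness on $\{R_2\neq0\}$ from order $3$ onward, the bijection between absolute and relative invariants, and the appeal to stabilization of singularities from \cite{KL} to control denominators are precisely the ingredients the paper's template proof of Theorem~\ref{TT1} uses, transported one jet-order down.
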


The proof is similar to that of Theorem~\ref{TT1} and is therefore omitted.

The relative invariant derivation can be related to the absolute invariant derivation
 \[
\nabla_w(R)=\frac{R_2^{4+w/3}}{R_4}\,\nabla\left(\frac{R}{R_2^{w/3}}\right)
 \]
and the above relative invariants are generated from $R_2,R_4$ in the following way:
 $$
\nabla_3 R_2=0,\quad \nabla_8 R_4=\tfrac13R_5,\quad
\nabla_{12} R_5=R_6-5R_4^2,\quad \nabla_{16} R_6=R_7-\tfrac73R_4R_5.
 $$

 \begin{cor}
Algebraic ODEs with essential point symmetry are given by  homogeneous elements of $\mathcal{R}$.
 \end{cor}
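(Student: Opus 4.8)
The plan is to establish the two directions of the asserted correspondence between algebraic $\g$-invariant ODEs and homogeneous elements of $\mathcal{R}$, where $\g$ is the algebra \eqref{L2}; recall that a scalar ODE of order $\ge2$ has essentially point symmetry only if its symmetry algebra contains $\g$, i.e.\ only if the ODE is $\g$-invariant, so it suffices to describe algebraic $\g$-invariant ODEs. The converse direction is immediate: if $R\in\mathcal{R}^w$ is homogeneous then, being a relative invariant, its zero locus is $\g$-invariant, and since $R$ is polynomial the equation $\{R=0\}$ is algebraic (and inherits the primitive point symmetry $\g$). For the forward direction, let $\E=\{F=0\}\subset J^k$ be an algebraic $\g$-invariant ODE and replace $F$ by a square-free defining polynomial for the Zariski closure of $\E$. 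I would first dispose of the singular stratum: by the orbit analysis preceding \eqref{eq:genericpoint}, either $\E\subset\Sing^k$, in which case $\E$ is $\{R_2=0\}$ and $R_2\in\mathcal{R}^3$ is the desired homogeneous element, or $\E$ meets the open $5$-dimensional stratum.

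In the latter case the heart of the argument is to show that each irreducible factor of $F$ is a relative invariant. Since $\g$ generates a connected pseudogroup, each irreducible component of the invariant hypersurface $\{F=0\}$ is itself invariant (a connected group cannot permute the finitely many components), so every prolonged field $X^{(k)}$ ($X\in\g$) is tangent to it. Writing $F=\prod_i F_i$ with $F_i$ irreducible, tangency gives $X^{(k)}(F_i)|_{\{F_i=0\}}=0$, and the Nullstellensatz (using that $(F_i)$ is prime, hence radical) yields $X^{(k)}(F_i)=\mu_i(X)\,F_i$ for a polynomial factor $\mu_i(X)$; thus each $F_i$ is a relative invariant. Because the space of weights is one-dimensional, generated by the cocycle with $\lambda(X_4)=1$, evaluating the relative-invariance relation on $X_4^{(\infty)}=-x\p_x+\sum(n+1)y_n\p_{y_n}$ decomposes $F_i$ into $X_4$-eigenspaces and forces it to be weighted-homogeneous of a single weight $w_i$, so that $F_i\in\mathcal{R}^{w_i}$. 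Here the structure theorem for $\mathcal{R}$ (the theorem preceding this corollary) guarantees that such a polynomial relative invariant is genuinely a homogeneous element of $\mathcal{R}$, in particular independent of $x,y,y_1$. Multiplying, $F=\prod_i F_i\in\mathcal{R}^{\sum_i w_i}$ is homogeneous, as claimed.

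I expect the main obstacle to be precisely the passage from ``$\g$-invariant zero locus'' to ``relative invariant'': one must justify the tangency-plus-Nullstellensatz step carefully in the jet setting (where $X^{(k)}$ raises polynomial degree and the jet variables carry mixed weights) and confirm that the resulting factor $\mu_i$ lands in the one-dimensional weight space, so that $F_i$ is forced to be genuinely weighted-homogeneous rather than merely a relative invariant of variable weight. An alternative route, closer to the discussion around \eqref{eq:genericpoint}, bypasses the Nullstellensatz: on the open stratum $\E$ is $\{f(I_4,\nabla(I_4),\dots)=0\}$ with $f$ rational by algebraicity; since $I_4=R_4^3/R_2^8$ and, by the recursion $\nabla_8R_4=\tfrac13R_5,\ \nabla_{12}R_5=R_6-5R_4^2,\dots$, each $\nabla^j(I_4)$ equals a homogeneous relative invariant divided by a power of $R_2$, every absolute invariant has weight $0$. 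Hence $f$ evaluates to a ratio $A/B$ of homogeneous relative invariants of equal weight, and the Zariski closure of $\{f=0\}$ is contained in $\{A=0\}$ with $A\in\mathcal{R}$ homogeneous. Either route reduces the corollary to the already-established structure of $\mathcal{R}$.
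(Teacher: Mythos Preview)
Your proposal is correct, and your second (``alternative'') route is precisely the paper's implicit argument: in the analogous contact case the paper simply observes that if $\E$ is algebraic then the function $f$ in \eqref{eq:genericpoint} is rational, so clearing denominators (which are powers of $R_2$) produces a polynomial equation that is a homogeneous element of $\mathcal{R}$; the point case is handled identically and the corollary is stated without further proof.

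Your first route, via irreducible decomposition and the Nullstellensatz, is more detailed than anything the paper provides and is a genuinely independent argument. It has the advantage of not relying on the explicit description of absolute invariants through $I_4$ and $\nabla$, working instead directly from invariance of the zero locus; conversely, the paper's route is shorter because the structure of $\mathcal{R}$ and the form $I_4=R_4^3/R_2^8$ have already been established, so one need only observe that rational expressions in weight-zero quantities clear to homogeneous polynomials. One small caution on your first route: the step ``evaluating on $X_4^{(\infty)}$ forces $F_i$ to be weighted-homogeneous'' tacitly uses that $\mu_i(X_4)$ is a \emph{constant} (not merely a polynomial on $J^k$); this follows from degree comparison once $F_i$ is irreducible, but deserves an explicit word. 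Also note that the contact-case corollary in the paper includes the qualifier ``irreducible'', which your factorization argument handles cleanly and which the paper's version in the point case silently drops.
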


Such ODEs can be effectively described as above.
Again, up to order 4 there are finitely many non-equivalent ODEs with essentially point symmetry algebra:
actually only $R_2=0$, $R_4=0$ and $R_4^3-R_2^8=0$, corresponding to equations \eqref{ODe2}.
Starting from order 5 there are infinitely many non-equivalent ODEs
with essentially point symmetry.


 \begin{remark}\rm
The similarity between computations in this and the previous section
has a conceptual explanation.
The contact vector fields in \eqref{L1} preserve the distribution $\langle\p_y\rangle$ transversal to the contact distribution.
Hence the projection along it is a homomorphism from Lie algebra \eqref{L1} to \eqref{L2} with one-dimensional kernel:
wipe out every occurrence of $\p_y$ in the first Lie algebra and then change $y_1$ to $y$. This
decreases the order by 1 and is compatible with prolongations, see \cite[Sect.\ 6.2-6.3]{DK}.
Thus the algebra $\mathcal{R}$ of Theorem \ref{TT1} will pass to that of Theorem \ref{TT2}.
 \end{remark}

\section{Systems of ODEs whose symmetries are not fiber preserving}\label{sect:Sytems}

In this section we turn to systems of ordinary differential equations on two functions of one variable.
We use the notation $J^i=J^i(\C,\C^2)$ for the space of jets of a pair of functions on $\C$.
Coordinates $(x,y,z)$ on $J^0=\C\times\C^2$ induce coordinates $\bigl(x,\{y_i\}_{i=0}^k,\{z_j\}_{j=0}^k\bigr)$ on $J^k$.

By a (determined) {\it system of ODEs} of orders $(k,l)$, $k\ge l$, we mean a system given by two functions
$F\in \mathcal{O}(J^k), G\in \mathcal{O}(J^l)$
with the property
 \[
\left|\begin{matrix} F_{y_k} & F_{z_k} \\ G_{y_l} & G_{z_l} \end{matrix}\right| \neq 0.
 \]
Such a pair defines a $(k+l+1)$-dimensional submanifold
 \begin{equation}
\E = \{F=0,G=0,D_x(G)=0,\ldots,D_x^{k-l} (G)=0\} \subset J^k. \label{eq:GeneralSystem}
 \end{equation}


A symmetry of $\E$ is a vector field on $J^k$ tangent to $\E$, which preserves the Cartan distribution.
The latter is spanned by $\p_{y_{k}}$, $\p_{z_{k}}$ and $D_x^{(k)}=\p_x+\sum_{i=1}^{k} \left( y_{i}\p_{z_{i-1}}+y_{i}\p_{z_{i-1}} \right)$. By the Lie-B\"acklund theorem all vector fields preserving the Cartan distribution on $J^k$
are prolongations of point fields on $J^0(\C,\C^2)$, though this is not true for mixed jets\footnote{In the space of jets
of mixed order $J^{k,l}$, $k>l$, with coordinates $\bigl(x,\{y_i\}_{i=0}^k,\{z_j\}_{j=0}^l\bigr)$ an analog of the
Lie-B\"acklund theorem \cite{AK} allows prolongations of parameter-dependent contact vector fields,
but for the sake of simplicity we do not consider those in this paper.}.

A vector field $X$ on $J^0$ is a point symmetry of $\E$ if $X^{(k)}(F)|_{\E}=0$ and $X^{(l)}(G)|_{\E}=0$.
The latter condition implies $X^{(l+i)}(D_x^iG)|_{\E}=0$ for $i=1,\dots,k-l$. The point symmetries make up a Lie algebra.

If $k, l \geq 2$ this Lie algebra is always finite-dimensional, while if $l=1$ additional conditions must be satisfied for it to be finite-dimensional (see Appendix \ref{ApA} for details). Since we are relying on the classification of finite-dimensional Lie algebras of vector fields, our description of invariant ODE systems is complete only under these conditions.

We split the finite-dimensional Lie algebras of point vector fields into two classes: those that preserve a 2-dimensional foliation in $\mathbb C^3$ and those that don't. The significance of the first class is that its members are conjugate by point transformations to Lie algebras of vector fields that preserve the fibers of $J^0$. The second class can be further split into two classes:

\smallskip
\noindent$\bullet$ \textbf{The primitive Lie algebras of vector fields on $\C^3$}
preserve no foliation and were classified by Lie in \cite[Chapt. 7]{L2}.
There are only 8 primitive Lie algebras, and they all have one of the following three as a Lie subalgebra:
  \begin{align}
\circ\quad & \langle \partial_x, \partial_y, \partial_z, x \partial_y+y \partial_x, x \partial_z+z \partial_x, y \partial_z - z \partial_y \rangle,\label{L3}\\
\circ\quad  & \langle \partial_y, \partial_z, 2 \partial_x+y \partial_y+z\partial_z, y\partial_y-z\partial_z, 2 y \partial_x+y^2 \partial_y-e^x \partial_z,2 z \partial_x-e^x \partial_y+z^2 \partial_z\rangle,\label{L4}\\
\circ\quad & \langle \partial_x, \partial_y-z \partial_x, \partial_z+y \partial_x, y\partial_z, z\partial_y, y \partial_y-z\partial_z, 2x \partial_x+y \partial_y+z \partial_z, \notag\\
& \qquad  xy \partial_x+y^2\partial_y+(yz+x) \partial_z, x z \partial_x+ (yz-x)\partial_y+z^2\partial_z,x ( x\partial_x+y \partial_y+z\partial_z) \rangle.\label{L5}
  \end{align}
Figure \ref{fig:Primitive} shows a diagram of inclusions between the eight primitive Lie algebras.

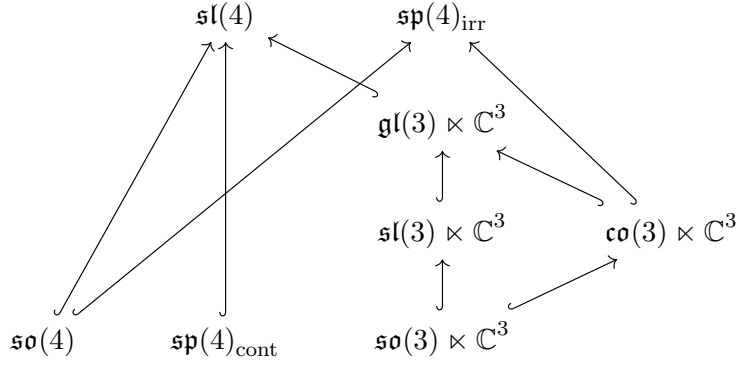
\begin{figure}
\begin{tikzcd}
& &\mathfrak{sl}(4) & \mathfrak{sp}(4)_{\text{irr}} \\
& & &\mathfrak{gl}(3)\ltimes \mathbb C^3\arrow[hook]{ul} \\
& & &\mathfrak{sl}(3)\ltimes \mathbb C^3 \arrow[hook]{u} & \mathfrak{co}(3)\ltimes \mathbb C^3 \arrow[hook]{uul}\arrow[hook]{ul} \\
&\mathfrak{so}(4)\arrow[hook]{uuurr} \arrow[hook]{ruuu} &\mathfrak{sp}(4)_{\text{cont}} \arrow[hook]{uuu} & \mathfrak{so}(3)\ltimes \mathbb C^3 \arrow[hook]{u}\arrow[hook]{ur}
\end{tikzcd}
\caption{Diagram of inclusions of the primitive Lie algebras of vector fields on $\mathbb C^3$. The maximal elements are collected on the top, while the minimal elements are collected on the bottom. Of these eight Lie algebras, only 2 share the same Lie algebra structure. To distinguish them, we denote them here by $\mathfrak{sp}(4)_{\text{irr}}$ (preserves no distribution) and $\mathfrak{sp}(4)_{\text{cont}}$ (preserves the contact distribution).}
\label{fig:Primitive}
\end{figure}

The Lie algebra \eqref{L3} preserves the Minkowski metric $-dx^2+dy^2+dz^2$ while \eqref{L4} preserves
the metric $dx^2+4 e^{-x} dy dz$. Both of these metrics have constant curvature and the algebras are
isomorphic to $\mathfrak{so}(3)\ltimes\C^3$ and $\mathfrak{so}(4)$, respectively.
The Lie algebra \eqref{L5} is the projectivization of the linear $\mathfrak{sp}(4)$-action on $\C^4(t,x,y,z)$
with the symplectic form $dt \wedge dx + dy \wedge dz$. It can also be described as the
the Lie algebra of symmetries of the scalar ODE $u'''=0$. However, this is not directly obvious from the coordinate expression since the contact distribution preserved by \eqref{L5} is given by the 1-form $dx-z dy+y dz$ (i.e. not the standard coordinates on $J^1$). In \S\ref{sect:twistor} we explain how this action is related to a different $\mathfrak{sp}(4)$-action on $\mathbb C^3$ through a twistor correspondence.

\smallskip
\noindent$\bullet$ \textbf{The Lie algebras preserving a 1-dimensional foliation, but no 2-dimensional foliation}
are contained among the Lie algebras listed by Lie in \cite[Chapt. 8, §41-§44]{L2}. They all contain a Lie subalgebra which is locally equivalent to one of the following three Lie algebras of vector fields:
  \begin{align}
\circ\quad & \langle \partial_x, \partial_y+x \partial_z, x \partial_y + \tfrac{1}{2} x^2 \partial_z, x \partial_x-y \partial_y, y \partial_x+\tfrac{1}{2} y^2 \partial_z, \partial_z \rangle,\label{L6} \\
\circ\quad &\langle \partial_x,\partial_y, x \partial_y+\partial_z, x \partial_x -y\partial_y-2 z \partial_z,y\partial_x-z^2 \partial_z, x \partial_x+y\partial_y, \notag\\
& \qquad x^2 \partial_x+xy\partial_y+(y-xz)\partial_z, xy\partial_x+y^2 \partial_y+z(y-xz) \partial_z \rangle, \label{L7}\\
\circ\quad & \langle \partial_x ,\partial_y ,x \partial_y ,x \partial_x -y \partial_y ,y \partial_x ,x \partial_x +y \partial_y +\partial_z,\notag\\
& \qquad
x^2 \partial_x +x y \partial_y +\tfrac{3}{2} x \partial_z,x y \partial_x +y^2 \partial_y +\tfrac{3}{2} y \partial_z \rangle.\label{L8}
  \end{align}
  In Appendix \ref{ApB} we will explain why only these three Lie algebras, which we  refer to as Lie{\hskip1pt}16, Lie{\hskip1pt}27 and Lie{\hskip1pt}29, remained from Lie's list of 21 items.

The Lie algebra \eqref{L6} is 6-dimensional and projects to the 5-dimensional Lie algebra of area-preserving vector fields on $\C^2$;
abstractly it is isomorphic to $\mathfrak{sl}(2)\ltimes\mathfrak{heis}(3)$.
The Lie algebras \eqref{L7} and \eqref{L8} are 8-dimensional and project to the $\mathfrak{sl}(3)$-action on $\C^2$.
Considering all three cases as fiber-preserving transformations on the bundle $\C^2(x,y)\times\C(z)\to\C^2(x,y)$,
we observe that the second Lie algebra preserves a projective structure on the fibers,
while the first and last of those preserve a metric structure on the fibers, see \cite{S}.

Our goal is to describe the systems of ODEs that have finite-dimensional symmetry algebras that do not preserve
a 2-dimensional foliation. These ODEs are all invariant under one of the six Lie algebras listed above (in suitable local coordinates). We analyse these cases, one by one, in sections \ref{sect:Minkowski} to \ref{sect:Lie29}. The results are summarized in Tables \ref{tab:primitive} and \ref{tab:imprimitive} in the introduction.

\begin{figure}[h]
\begin{tikzcd}
& & \mathfrak{gl}(3) \ltimes \bigoplus_{k=0}^h S^k \mathbb C_2 \\
&\mathfrak{sl}(3)_1 \ltimes \bigoplus_{k=0}^h S^k \mathbb C_2 \arrow[hook]{ur}  & & ((\mathfrak{gl}(2) \ltimes \mathbb C^2) \oplus \mathbb C)\ltimes \bigoplus_{k=0}^h S^k \mathbb C_2 \arrow[hook]{ul}    \\
& &(\mathfrak{gl}(2) \ltimes \mathbb C^2) \underset{a}{\ltimes}\bigoplus_{k=0}^h S^k \mathbb C_2 \arrow[hook]{ur}\arrow[hook]{ul}[swap]{a=\frac{2h}{3}} &((\mathfrak{sl}(2) \ltimes \mathbb C^2) \oplus \mathbb C)\ltimes \bigoplus_{k=0}^h S^k \mathbb C_2 \arrow[hook]{u}        \\
&\mathfrak{gl}(2) \ltimes \mathfrak{heis}(3) \arrow[hook]{uu}[swap]{h=3}\arrow[hook]{ur}{a=2}  & (\mathfrak{sl}(2) \ltimes \mathbb C^2) \ltimes \bigoplus_{k=0}^h S^k \mathbb C_2\arrow[hook]{u} \arrow[hook]{ur} &\mathfrak{gl}(3)    \\
&\mathfrak{sl}(2) \ltimes \mathfrak{heis}(3)\arrow[hook]{u}\arrow[hook]{ur} &\mathfrak{sl}(3)_2 & \mathfrak{sl}(3)_1 \arrow[hook]{u}
\end{tikzcd}
\caption{Diagram of inclusions for the Lie algebras listed in Proposition \ref{prop:imprimitive}. All inclusions hold for any fixed $h \geq 1$, except for the one that is marked. Different subscripts of $\mathfrak{sl}(3)$ refer to different realizations of the Lie algebra, subscript $a$ under semidirect product encodes the action of $\mathfrak{z}(\mathfrak{gl}(2))$, while $\C_2$ means $(\C^2)^*$.}
\label{fig:imprimitiveBig}
\end{figure}
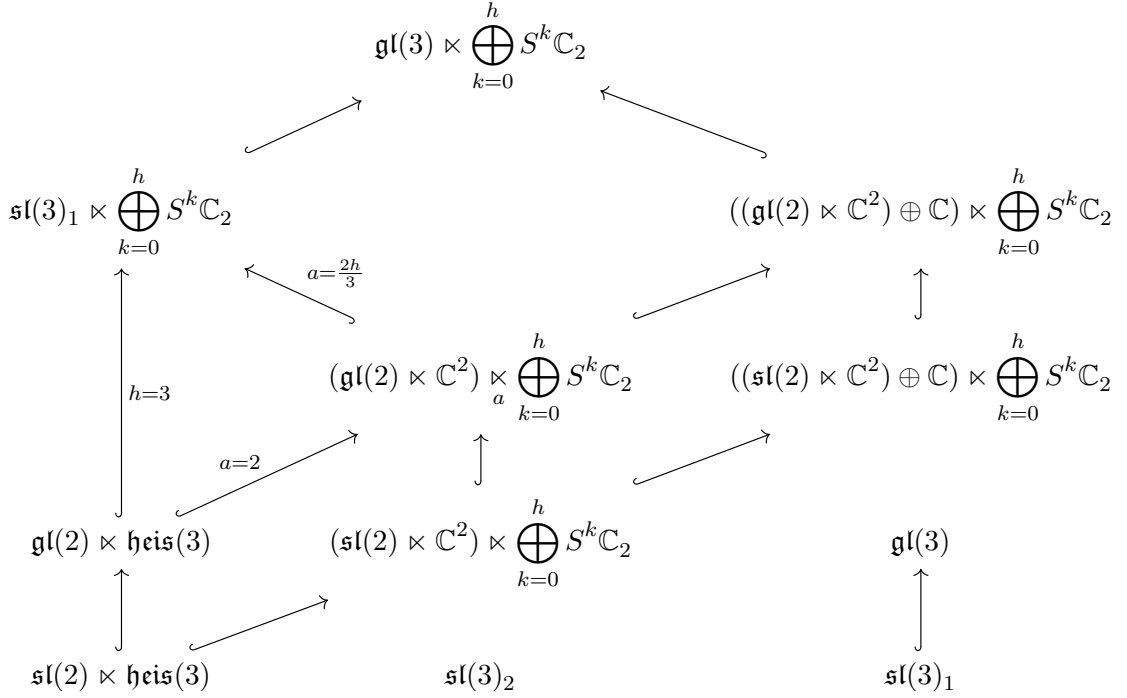

\subsection{A strategy for listing all invariant systems} \label{sect:algorithm}

The analysis of \S\ref{sect:Scalar} for scalar ODEs was relatively simple due to the fact that any scalar ODE
of order $k$ is a submanifold in $J^k$ of codimension 1. Therefore it is given by a function of rational absolute
differential invariants or by a scalar (polynomial) relative differential invariant.
While generic systems of ODEs can be given by two functions of absolute differential invariants, finding the remaining (singular) invariant determined systems requires significantly more effort.

Let $\mathfrak g$ be a Lie algebra of point symmetries of an ODE system of the form \eqref{eq:GeneralSystem}.
If $F$ and $G$ are of the same order $k=l$, then it is possible that the system cannot be defined
as the locus of two scalar relative or absolute differential invariants.
Indeed, there exist linear systems\footnote{For instance, such is the Liouville invariant of
scalar ODEs with cubic dependence on the first derivative (projective connections)
that is responsible for their trivialization by a point transformation, see \cite{Kr}.}
$\{F=0,G=0\}\subset J^k$ of codimension 2 for which
 \[
X^{(k)}(F)= \lambda_1(X) F+\lambda_2(X) G, \quad X^{(k)}(G)= \mu_1(X) F+\mu_2(X) G,
\qquad X\in\g,
 \]
and $\begin{pmatrix}\lambda_1 & \lambda_2\\ \mu_1 & \mu_2\end{pmatrix}\in\g^*\otimes
\mathfrak{gl}(2,\mathcal{P}(J^\infty))$ is a genuine matrix-valued cocycle.

If $F$ and $G$ have different orders $k>l$, then $G$ is either a function of absolute invariants, or a relative invariant.
This is because the vanishing of $X^{(k)}(G)= X^{(l)}(G)$ on $\E$ is independent of the vanishing of $F$.
In that case, $F$ is a conditional relative invariant or a function of conditional absolute differential invariants, defined as follows (see related definitions in \cite[p.~453]{B} and \cite[Def.~4]{Y}).

 \begin{definition}
A function $f$ on $J^k$ is a conditional absolute/relative differential invariant of order $k$ (with respect to the invariant differential equation given by the function $G$ of order $l \leq k$)  if $f$ is an absolute/relative invariant of the $\mathfrak g$-action on $\{G=0, ..., D_x^{k-l} G=0\} \subset J^k$.
 \end{definition}

In some cases the conditional invariants can be interpreted as restrictions of invariants on $J^k$
while in other cases they can not. The following two Lie algebra actions on the plane $\mathbb C^2(x,y)$ illustrate
this phenomenon:
 \begin{itemize}
\item The Lie algebra $\langle x\p_x\rangle$ has invariant $I=y$. The singular set where the orbit dimension drops
is given by $\Sigma=\{x=0\}$, and (the restriction of) $I$ separates orbits on $\Sigma$. The (Zariski) closure of each
1-dimensional orbit contains a 0-dimensional orbit.
\item The Lie algebra $\langle y\p_x \rangle$ has the same absolute invariant $I$.
However, on the singular set $\Sigma=\{y=0\}$, where the orbit dimension drops, the orbits are separated by
the conditional invariant $J=x$. All 1-dimensional orbits are closed.
 \end{itemize}

Our procedure for finding invariant systems of ODEs consists of the following steps:
\begin{enumerate}
  \item {\it Find generators $I_1,\dots, I_r$ and $\nabla$ for the field of rational differential invariants.} The field separates orbits in $J^i \setminus \Sing^i$, where $\Sing^i \subset J^i$ is a $\mathfrak g$-invariant  Zariski-closed subset which by \cite{KL} stabilizes at some order $m$: for $i>m$ we have $\Sing^i= \pi_{i,m}^{-1}(\Sing^m)$. For generic  invariant systems of type \eqref{eq:GeneralSystem}, the functions $F$ and $G$ can be expressed through rational absolute invariants.

  Let $\mathcal E \subset J^k$ be a system of type \eqref{eq:GeneralSystem}. If $\mathcal E \cap \Sing^k \neq \emptyset$, then we can split the system into two components: $\mathcal E \cap \Sing^k$ and $\mathcal E \setminus (\mathcal E \cap \Sing^k)$. Both components are invariant. The first component is not in general a determined system, but it may contain invariant determined systems of lower order.
  \item {\it Determine the singular sets $\Sing^i$.} 
 These sets contain the orbits in $J^i$ where the orbit dimension drops, but they may be larger.  This is due  to the fact that the closures of two (or more) orbits may intersect in the same orbit of lower dimension. In such a situation, the rational absolute differential invariants clearly take the same value on these orbits, and these orbits can therefore not be separated by the absolute invariants.

Let $X_1,\dots, X_q$ be a basis of vector fields in the Lie algebra $\mathfrak g$. Let   $r$ be the dimension of a generic $\mathfrak g$-orbit in $J^i$, meaning that the dimension of the subspace $\langle X_1^{(i)}|_\theta ,\dots , X_q^{(i)}|_\theta  \rangle \subset T_\theta J^i$ is $r$ for a point $\theta \in J^i$ in general position. The set where the orbit dimension drops is given by the simultaneous vanishing of all determinants of $r \times r$-minors of the $q\times (\dim J^i)$-matrix defined by these $q$ prolonged vector fields. Note that even though this set in principle is easy to find, it can be quite complicated. In the following, we will skip most of the details regarding such computations and instead summarize the results. Computations were often done with the help Maple and, in particular, the PolynomialIdeals package  in addition to the DifferentialGeometry package.

In all the cases we consider, the set $\Sing^i$ can, for sufficiently large $i$, be chosen such that it has codimension 1. In this case, it can be given by a product of scalar relative invariants.  In particular, we have  $\Sing^i =\{R_1\cdots R_q=0\}$ for $i \geq m$ for some relative differential invariants $R_1, \dots, R_q$.
  \item  {\it For each of the relative invariants $R_1,\dots, R_q$ defining $\Sing^m$, consider the invariant underdetermined ODE $\Sigma_j^i \subset J^i$ given by $R_j=0$ and its total derivatives. Repeat steps (1) and (2) on each of these, i.e. find conditional absolute and relative invariants.} Any connected component of a determined system inside $\Sing^i$ must also be contained inside $\Sigma_j^i$ for some~$j$.
  \item The remaining determined systems, where $F$ and $G$ are necessarily of the same order $k=l$, are submanifolds of $\Sing^k$ of codimension 2 in $J^k$. In the cases we consider, they can be quickly singled out by analyzing the orbits on $J^i$ in detail for small~$i$.
\end{enumerate}

\subsection{Poincaré transformations on 3-dimensional Minkowski space} \label{sect:Minkowski}

Consider Lie algebra \eqref{L3} of vector fields preserving the flat metric, and denote it by $\mathfrak g$.
Real signature $(2,1)$ is equivalent to the Euclidean signature $(3,0)$ over $\C$; the former choice makes the zero locus of the first relative invariant $R_1$ real.
This and other lower order relative differential invariants are
 \begin{align*}
R_1 &=y_1^2+z_1^2-1, \\
R_2 &= (y_1^2-1)z_2^2-2y_1 z_1 y_2 z_2+(z_1^2-1)y_2^2, \\
R_{3a} &= z_2 y_3-y_2 z_3, \\
R_{3b} &= R_1 D_x(R_2)-3 D_x(R_1) R_2.
 \end{align*}
The weight $\lambda$ of any relative invariant satisfies\footnote{Here and below we use the basis $X_i$ as in the defining formula of $\g$, and indicate only the nontrivial $\lambda(X_i)$.}
$\lambda(X_4)=-w y_1$ and $\lambda(X_5)=-w z_1$ for some $w$, and is proportional the divergence of the prolonged vector fields with respect to the volume form $dx \wedge dy \wedge dz \wedge dy_1 \wedge dz_1$ on $J^1$.  In particular, the relative invariant $R_1$ has weight $w=2$, while $R_2$  and $R_{3a}$ have weights $w=6$, and $R_{3b}$ has weight $w=9$.

 \begin{theorem} \label{th:InvarMinkowski}
The field of absolute differential invariants is generated by the differential invariants
$I_2= \frac{R_2}{R_1^3}, I_{3a}=\frac{R_{3a}}{R_1^3}$ and the invariant derivation
$\nabla=\frac{R_{3b}}{R_1^5} D_x$.
 \end{theorem}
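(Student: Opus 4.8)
The plan is to run the Lie--Tresse counting argument, parallel to the proof of Theorem~\ref{TT1} but adapted to the two-function setting. First I would confirm that the three listed objects are what they are claimed to be. Since $R_2$ and $R_{3a}$ have weight $w=6$ and $R_1$ has weight $w=2$, the ratios $I_2=R_2/R_1^3$ and $I_{3a}=R_{3a}/R_1^3$ have weight zero and hence satisfy $X^{(k)}(I_2)=X^{(k)}(I_{3a})=0$ for all $X\in\g$. For the derivation I would use the identity
\[
[X^{(\infty)},fD_x]=\bigl(X^{(\infty)}(f)-f\,D_x(\xi_X)\bigr)D_x,
\]
where $\xi_X$ is the $\partial_x$-component of $X$, so that $fD_x$ is invariant exactly when $f$ is a relative invariant whose weight cocycle equals $\lambda_f(X)=D_x(\xi_X)$. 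Taking $f=R_{3b}/R_1^5$, of weight $9-5\cdot2=-1$, gives $\lambda_f(X_4)=y_1$, $\lambda_f(X_5)=z_1$ and $\lambda_f=0$ on the remaining generators; since $D_x(\xi_{X_4})=D_x(y)=y_1$, $D_x(\xi_{X_5})=D_x(z)=z_1$, and $\xi_X$ is constant or zero on $X_1,X_2,X_3,X_6$, the derivation $\nabla$ is invariant.

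Next I would establish the orbit dimensions. The action is transitive on $J^0$ with stabilizer $\mathfrak{so}(3)$; the tangent direction of a curve reduces this to a one-parameter rotation on $J^1$ (so the action is transitive on the five-dimensional $J^1$), and the second-order data generically breaks the last rotation, making the action locally free on a Zariski-open subset of $J^k$ for every $k\ge2$. Equivalently, the common zero locus of the $6\times6$ minors of the $6\times\dim J^k$ matrix of prolonged generators---the analogue of the Lie determinant here---is a proper closed set. Hence generic orbits have dimension $6$, and since $\dim J^k=2k+3$ the field of rational invariants of order $\le k$ has transcendence degree $2k-3$. On the other hand the generators produce precisely $I_2,\nabla I_2,\dots,\nabla^{k-2}I_2$ together with $I_{3a},\dots,\nabla^{k-3}I_{3a}$, that is $(k-1)+(k-2)=2k-3$ invariants of order $\le k$, matching the count exactly.

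I would then prove algebraic independence by induction on the order, tracking the leading symbol. At order $3$ the two top-order invariants are $\nabla I_2=R_{3b}^2/R_1^9$, quadratic in $(y_3,z_3)$, and $I_{3a}=(z_2y_3-y_2z_3)/R_1^3$, linear in $(y_3,z_3)$, so their Jacobian in $(y_3,z_3)$ is generically nonzero and both are independent of the order-two invariant $I_2$. For the step, since $\nabla=fD_x$ introduces the new variables $(y_k,z_k)$ only through $D_x$ (linearly, via $y_{k-1}\mapsto y_k$, $z_{k-1}\mapsto z_k$), the $(y_k,z_k)$-gradients of $\nabla^{k-2}I_2$ and $\nabla^{k-3}I_{3a}$ equal $f$ times the Jacobian of the previous top-order pair $(\nabla^{k-3}I_2,\nabla^{k-4}I_{3a})$ in $(y_{k-1},z_{k-1})$; nondegeneracy of the latter (the inductive hypothesis) yields nondegeneracy of the former, so the two new invariants are independent of each other and of all lower-order ones.

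Finally, a transcendence basis generates the whole field only if there is no intermediate algebraic extension. For this I would appeal to the standing hypotheses of \S\ref{rode}: the pseudogroup is transitive on $J^0$ and the point-stabilizer integrates to a connected algebraic group, so by Rosenlicht's theorem the rational invariants separate generic orbits; it then remains to check that our generators already separate generic orbits, i.e.\ that the rational map they define is birational onto its image, which follows by reconstructing a generic jet modulo $\g$ from $I_2,I_{3a}$ and finitely many of their $\nabla$-derivatives. I expect this last point---upgrading the transcendence basis to a generating set by excluding a nontrivial algebraic extension---to be the main obstacle, the weight checks and the orbit-dimension count being routine.
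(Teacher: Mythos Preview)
Your approach is essentially the same as the paper's: verify invariance via the weight calculus, count that the generators produce a transcendence basis of the right size $2k-3$ by tracking how $\nabla$ introduces the new top-order variables, and then argue there is no proper algebraic extension inside the field of rational invariants. The paper carries out exactly these three steps.

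The one place where the paper is more concrete than you is the final step. You gesture at Rosenlicht and birationality and flag this as the main obstacle. The paper instead observes that for $k\ge3$ the two new top-order invariants $\nabla^{k-2}I_2$ and $\nabla^{k-3}I_{3a}$ are \emph{linear} when restricted to the fibers of $J^{k+1}\to J^k$; two independent linear functions on $\C^2$ already generate the function field of the fiber, so no algebraic extension is possible above order~3, and order~3 is then handled by a direct computation. You already have this linearity in hand---it is exactly your inductive claim that the $(y_k,z_k)$-gradients come from the previous Jacobian via $D_x$---so you can recycle your independence argument to close the extension gap without invoking birationality abstractly. One caution: at order~3 your $\nabla I_2=R_{3b}^2/R_1^9$ is quadratic, not linear, in $(y_3,z_3)$, so (as the paper does) you must treat that base case separately by an explicit check rather than by the linearity argument.
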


 \begin{proof}
It is straight-forward to verify that the indicated set is invariant,
and since the invariants are rational it is suffices to do this on infinitesimal level, that is
$X^{(3)}(I_2)=X^{(3)}(I_{3a})=0$, $[X^{(\infty)},\nabla]=0$ $\forall$ $X\in\g$.
We also compute that this set generates $2k-3$ algebraically independent rational invariants in each order $k\geq 2$.
To show that there exists no invariant field extension of rational differential invariants note that the invariants
$\nabla^{r}(I_2), \nabla^{r-1}(I_{3a})$ are linear when restricted to fibers of $J^{k+1}\to J^k$ for $k\geq 3$.
Thus it is enough to demonstrate the claim for the field of differential invariants of order 3, where it is a direct computation.
 \end{proof}

 \begin{remark}\label{rmk:derivation}
Solving the system $[X^{(\infty)},\nabla]=0$ for $\nabla=f D_x$ gives a simpler solution  $f= R_1^{-1/2}$, however such $\nabla$ is not invariant because the transformation $(x,y,z)\mapsto(-x,-y,z)$, which lies in the flow of $x\partial_y+y\partial_x$, maps $R_1^{-1/2} D_x$ to $-R_1^{-1/2} D_x$.

The generators in Theorem \ref{th:InvarMinkowski} are well-known. The invariant $I_2$ is the square of the curvature, $I_{3a}/I_2$ is proportional to the torsion, but differentiation by the natural parameter is not an invariant derivation, as we have just noted.
 \end{remark}

Now, let us make a detailed analysis of orbits on $J^k$ for small $k$. The action is transitive on $J^0$.
On $J^1$ there are two orbits: $\Sing^1=\{R_1=0\}$ and its complement.
On $J^2$ generic orbits have dimension 6 (codimension 1).
There are two 5-dimensional orbits, whose Zariski closures are $\{y_2=0,z_2=0\}$ and $\{R_1=0,D_x(R_1)=0\}$, respectively. The intersection of these two sets is the unique 4-dimensional orbit $\{R_1=0,D_x(R_1)=0,z_1 y_2-y_1 z_2=0\}$. All three orbits of dimension less than 6 lie inside the subset $\{R_2=0\} \subset J^2$, so we set $\Sing^2=\{R_2=0\}$. For $k >2$ all orbits lying outside $\Sing^k=\pi_{k,2}^{-1}(\Sing^2)$ are 6-dimensional. Any system of ODEs that is not strictly contained in $\Sing^{\infty}$ is given by  functions of rational absolute differential invariants.

 \begin{lemma}
Let $\E\subset J^k$ be a $\mathfrak g$-invariant determined ODE system of type \eqref{eq:GeneralSystem},
given by functions $F \in \mathcal{O}(J^k),G \in \mathcal{O}(J^l)$ of orders $k$ and $l\leq k$. Then, if $F$ and $G$ are not functions of rational absolute differential invariants, there are three possibilities:
\begin{itemize}
\item  $G=R_1$, $l=1$ and $k\geq 2$,
\item  $G=R_2$, $l=2$ and $k \geq 3$,
\item $F=y_2, G=z_2$ and $k=l=2$.
\end{itemize}
 \end{lemma}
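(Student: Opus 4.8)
The plan is to classify all $\mathfrak g$-invariant determined systems $\E = \{F=0, G=0, \dots\}$ that are \emph{not} expressible through rational absolute invariants, by exploiting the orbit structure on $J^k$ established just before the lemma. The crucial dichotomy comes from the strategy of \S\ref{sect:algorithm}: since the rational absolute invariants $I_2, I_{3a}, \nabla^r(I_2), \nabla^{r-1}(I_{3a})$ separate all $\mathfrak g$-orbits in $J^k \setminus \Sing^k$, any determined system whose defining functions are \emph{not} functions of absolute invariants must have at least one component contained inside the singular set $\Sing^k = \pi_{k,2}^{-1}(\Sing^2)$, where $\Sing^2 = \{R_2 = 0\}$. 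So the whole problem reduces to understanding invariant determined systems that live (at least partly) inside $\{R_2 = 0\}$.

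First I would split according to the orders $(k,l)$ of the defining functions $F, G$. When $k > l$, the strategy dictates that $G$ alone cuts out an invariant \emph{underdetermined} equation in $J^l$, so $G$ must itself be either a function of absolute invariants or a relative invariant; since we assume $G$ is not a function of absolute invariants, $G$ must be a relative invariant whose zero locus is a genuine singular stratum. I would then examine which relative invariants can serve this role by revisiting the explicit orbit decomposition: on $J^1$ the only singular orbit is $\Sing^1 = \{R_1 = 0\}$, giving the first case $G = R_1$, $l = 1$; and the singular locus $\Sing^2 = \{R_2 = 0\}$ on $J^2$ that is \emph{not} already captured by $R_1 = 0$ gives the second case $G = R_2$, $l = 2$. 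Here one must verify that no relative invariant of higher order produces a new singular underdetermined equation not already accounted for, which follows from the stabilization $\Sing^i = \pi_{i,2}^{-1}(\Sing^2)$ for $i \geq 2$; any higher-order relative invariant vanishing on a singular stratum factors through $R_1$ and $R_2$ up to the invariant-derivation formalism of Theorem~\ref{th:InvarMinkowski}.

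The remaining case is $k = l$, where $F$ and $G$ have the same order and the system is a codimension-2 submanifold of $\Sing^k$. Following step (4) of the strategy, I would pin this down by analyzing the low-order orbits directly: the genuinely lower-dimensional orbits on $J^2$ were computed to be the $5$-dimensional orbits with Zariski closures $\{y_2 = 0, z_2 = 0\}$ and $\{R_1 = 0, D_x(R_1) = 0\}$, together with the $4$-dimensional orbit in their intersection. Of these, the only one that is a \emph{determined} system of codimension $2$ defined by two functions of the same order (rather than by a relative invariant and its total derivative) is $\{y_2 = 0, z_2 = 0\}$, yielding the third case with $F = y_2$, $G = z_2$, $k = l = 2$. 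The other singular stratum $\{R_1 = 0, D_x(R_1) = 0\}$ is exactly the prolongation of the underdetermined equation $R_1 = 0$, so it is subsumed under the first case rather than giving a new determined system.

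The main obstacle I anticipate is the bookkeeping in the $k = l$ case: one must rule out the existence of \emph{other} codimension-2 invariant determined systems sitting inside $\Sing^k$ for higher $k$ that are not prolongations of the underdetermined equations $R_1 = 0$ or $R_2 = 0$. This requires confirming that, inside $\Sing^k = \pi_{k,2}^{-1}(\Sing^2)$, every invariant determined system is either the prolongation of one of the lower-order underdetermined equations (the first two cases) or projects down to the single exceptional $J^2$-orbit $\{y_2 = 0, z_2 = 0\}$. I expect this to follow from the fact that outside $\{y_2 = 0, z_2 = 0\}$ every fiber of $\pi_{k,2}$ over $\Sing^2$ carries a free $\mathfrak g$-action whose orbits are separated by the \emph{conditional} invariants on $\Sigma\colon R_1 = 0$ and $\Pi\colon R_2 = 0$ (the quantities $J_4$, $Q_2$, $K_3$ recorded in Table~\ref{tab:primitive}), so that any invariant determined system strictly inside $\Sing^k$ but not a prolongation of $R_1=0$ or $R_2=0$ would have to be built from those conditional invariants and hence would appear in the detailed per-stratum analysis carried out in the subsequent theorems rather than as a new entry here.
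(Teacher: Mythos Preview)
Your proposal is correct and follows essentially the same approach as the paper. The paper's own justification is extremely brief---it states the lemma immediately after the orbit analysis on $J^0$, $J^1$, $J^2$ and the stabilization $\Sing^k=\pi_{k,2}^{-1}(\Sing^2)$, then adds only a one-sentence remark that the third case is forced because a codimension-2 invariant subset of $J^2$ consists of orbits of dimension $\leq 5$, and the sole such determined system identified in the preceding orbit analysis is $\{y_2=0,z_2=0\}$; your proposal spells out the same reasoning (the $k>l$ versus $k=l$ split, the identification of $G$ with $R_1$ or $R_2$ via stabilization of the singular locus, and the low-order orbit inspection for the equal-order case) in considerably more detail than the paper bothers to.
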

Note that the third case is determined by looking for a subset $\{F=0,G=0\}$ which lies strictly inside $\{R_2=0\} \subset J^2$. Since the equation $\{F=0,G=0\} \subset J^2$ has codimension 2, it consists of orbits of dimension 5 or less. From the above analysis, we know there is only one determined system of this type: $\{y_2=0, z_2=0\}$.

The cases when $G=R_1$ and $G=R_2$ must been considered separately. Since the generators we found for absolute invariants vanish or diverge under these conditions, we must compute conditional differential invariants. Notice that we have $R_2=(y_2^2+z_2^2) R_1- (D_x(R_1)/2)^2$. This implies that the equation $R_2=0$ is a differential consequence of $R_1=0$ and, in particular,  does not give a determined system when added to the equation $R_1=0$.

Consider the subset
 \[
\Sigma^k=\{R_1=0,D_x(R_1)=0,\dots,D_x^{k-1}(R_1)=0\} \subset \pi_{k,1}^{-1}(\Sing^1) \subset J^k.
 \]
Since $R_1$ is a relative invariant, this set is preserved by $\mathfrak g$, and we consider the $\mathfrak g$-action on $\Sigma^k$. The action is transitive on $\Sigma^1$. On $\Sigma^2$, there is one singular orbit given by the vanishing of the conditional relative invariant $Q_2=z_1 y_2-y_1 z_2$. (On $\Sigma^3$ the vanishing of $Q_2$ is equivalent to the vanishing of $R_{3a}$.) This gives the  determined system $\{R_1=0,D_x(R_1)=0,Q_2=0\}$ which is exactly the 4-dimensional orbit in $J^2$ described above.  The $\mathfrak g$-action is transitive on $\Sigma^3\setminus \{Q_2=0\}$. It follows that orbits are 6-dimensional on $\Sigma^k \setminus \{Q_2=0\}$ for $k\geq 3$. The manifold $\Sigma^4 \subset J^4$ is 7-dimensional, so there is one algebraically independent conditional absolute differential invariant of order 4. Then there is one additional independent invariant of each higher order.
The first takes the form
 \[
J_4 = \frac{4 Q_2 D_x^2(Q_2)-7D_x(Q_2)+4 Q_2^4}{Q_2^3}.
 \]
We also have the conditional invariant derivation
 \[
\nabla_{\Sigma} = \frac{D_x(J_4)}{Q_2} D_x
 \]
 which, together with $J_4$, generates all conditional absolute differential invariants on $\Sigma^k$.

Next, we consider the subset
 \[
\Pi^k=\{R_2=0, D_x(R_2)=0,\dots, D_x^{k-2}(R_2)=0\} \subset J^k.
 \]
There is one 6-dimensional orbit on $\Pi^2$. The orbit dimension drops on the union of $\{y_2=0,z_2=0\}$ and
$\{R_1=0,D_x(R_1)=0\}$. The first of these contains a determined invariant system. On $\Pi^3$, we have
the following conditional absolute invariant and invariant derivation:
 \[
K_3=\frac{(R_1 y_3 -\frac{3}{2} D_x(R_1)) y_2^2}{(y_1^2 z_2-y_1 y_2 z_1-z_2)^3},\qquad  \nabla_{\Pi} = \frac{R_1 y_3 -\frac{3}{2} D_x(R_1)}{y_2 R_1^2} D_x.
 \]
They generate all conditional absolute invariants on $\Pi^k$. 

 \begin{remark} \label{remark:connected}
 The algebraic subsets $\Pi^k$ are reducible for $k \geq 3$. For example, $\Pi^3$ has 3 irreducible 7-dimensional components. Two of these components are $\{y_2=0,z_2=0\}$ and $\{R_1=0,D_x(R_1)=0, y_1^2 z_2-y_1 y_2 z_1-z_2=0\}$. The first of these is a determined ODE system, while all ODE systems lying inside the latter set was considered in the paragraph concerning $\Sigma^k$. Thus we are mainly concerned with the remaining irreducible component, and it is here that $K_3$ separates generic orbits.
 \end{remark}

 By collecting the results of the above computations, we obtain the following statement which gives a description of the invariant ODE systems.

 \begin{theorem} \label{th:invarEqMinkowski}
Let $\E\subset J^k$ be a $\mathfrak g$-invariant determined ODE system of type \eqref{eq:GeneralSystem},
given by functions $F \in \mathcal{O}(J^k),G \in \mathcal{O}(J^l)$ of orders $k$ and $l\leq k$. Then,
either $F$ and $G$ can be expressed through rational absolute differential invariants or
the system takes one of the following forms:
\begin{itemize}
 \item $G=R_1$ and either $F=Q_2$ or $F$ is a function of the conditional absolute invariants, which are generated by $\nabla_\Sigma$ and $J_4$.
 \item $G=R_2$ and $F$ is a function of the conditional absolute invariants, which are generated by $\nabla_{\Pi}$ and $K_3$.
 \item $G=y_2$ and $F=z_2$.
\end{itemize}
 \end{theorem}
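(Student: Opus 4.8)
The plan is to assemble the orbit analysis carried out above into a single classification, taking the preceding lemma as the point of departure. That lemma shows that once $F$ and $G$ fail to be \emph{simultaneously} expressible through rational absolute differential invariants, the lower-order constraint is forced to be one of $G=R_1$, $G=R_2$, or the pair collapses to the second-order system $F=y_2$, $G=z_2$. The generic alternative is immediate from Theorem \ref{th:InvarMinkowski}: the rational absolute invariants separate the $\mathfrak g$-orbits on $J^k\setminus\Sing^k$, so any invariant determined system meeting this open set is cut out by their level sets, and both $F$ and $G$ are functions of these invariants. It therefore remains to treat the two singular constraints and to confirm the exceptional system.

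First I would handle $G=R_1$. Here the whole system lies on the invariant underdetermined equation $\Sigma^k=\{R_1=0,\dots,D_x^{k-1}(R_1)=0\}$, and the $\mathfrak g$-action restricted to $\Sigma^k$ has been shown to carry the conditional relative invariant $Q_2=z_1y_2-y_1z_2$ as its sole singular-orbit factor, together with conditional absolute invariants generated by $J_4$ and $\nabla_\Sigma$. Applying Lie's theorem (in the version recorded above) to this restricted action, the remaining component of the determined system is cut out either by the fundamental conditional relative invariant, giving $F=Q_2$, or by a function of the conditional absolute invariants $J_4,\nabla_\Sigma(J_4),\dots$; this is exactly the first case of the statement.

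Next I would treat $G=R_2$, where the system lies on $\Pi^k$. By Remark \ref{remark:connected} this locus is reducible, and its components $\{y_2=0,z_2=0\}$ and $\{R_1=0,D_x(R_1)=0,\dots\}$ have already been accounted for---the former as the exceptional system, the latter inside the $\Sigma^k$ analysis---so the new information is confined to the remaining irreducible component. On that component the singular orbits lie in the peeled-off pieces, so no extra conditional relative invariant appears, while $K_3$ and $\nabla_\Pi$ generate all conditional absolute invariants; Lie's theorem then forces $F$ to be a function of $K_3,\nabla_\Pi(K_3),\dots$, which is the second case. The third case, $F=z_2$, $G=y_2$, is the codimension-two system already singled out directly after the lemma by intersecting $\{F=0,G=0\}$ with $\Sing^2$ and matching it with the unique low-dimensional orbit.

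The main obstacle is the bookkeeping on the singular loci. One must check that the restricted actions on $\Sigma^k$ and $\Pi^k$ still meet the hypotheses of Lie's theorem (transitivity off the singular orbit and algebraicity of the relevant stabilizers) and that the listed conditional invariants genuinely generate, so that no invariant equation is overlooked. The reducibility of $\Pi^k$ is the delicate point: one has to certify that the components removed really coincide with systems handled elsewhere and contribute no determined system of their own, which is what lets us conclude that $K_3$ and $\nabla_\Pi$ suffice on the part of $\Pi^k$ that actually matters.
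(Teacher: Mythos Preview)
Your proposal is correct and follows essentially the same approach as the paper: the theorem is presented there precisely as a summary obtained ``by collecting the results of the above computations,'' and you have spelled out that collection---the lemma forces the three singular alternatives, Theorem~\ref{th:InvarMinkowski} handles the generic case, and the conditional-invariant analyses on $\Sigma^k$ and $\Pi^k$ (together with the reducibility noted in Remark~\ref{remark:connected}) dispose of the remaining cases.
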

Some of these differential equations have well-known geometric interpretations. The equation $R_1=0$ gives null curves, while the system $y_2=0,z_2=0$ gives geodesics in Minkowski space. The intersection of these, which can also be given by $R_1=0, Q_2=0$, has null geodesics as solutions. Finally, $R_2=0$ is the condition for vanishing of
curvature $\kappa$ of the curve ($R_2$ is the numerator of $\kappa^2$), but in non-Euclidean signature this does not
mean the curve is a straight line.

\subsection{Isometry algebra of a metric of constant nonzero curvature} \label{sect:so4}

Consider Lie algebra \eqref{L4} of Killing vectors for a space form of nonzero curvature.
Investigation of orbits for this Lie algebra, which we denote by $\mathfrak g$, is completely analogous to that of \eqref{L3}, therefore we omit the proofs.

We will express the absolute differential invariants through the following relative invariants:
\begin{align*}
R_1 &= e^x+4 y_1 z_1, \\
R_2 &= ( z_1-z_2) (y_1-y_2) R_1^2-(3 (y_1  z_1-y_1 z_2-y_2  z_1)^2+y_1^2  z_1^2-2 y_1^2 z_2^2-2  z_1^2 y_2^2) R_1 \\&\quad +4 y_1  z_1 (y_1  z_1-y_1 z_2-y_2  z_1)^2, \\
R_{3a} &= \left((z_1-z_2) y_3-(y_1-y_2) z_3+y_1 z_2-y_2 z_1\right) e^{2x} +\left((2 y_1 y_3-3 y_2^2) z_1^2-(2 z_1 z_3-3 z_2^2) y_1^2\right) e^x, \\
R_{3b} &= e^{x/2} (R_1 D_x(R_2)-3 D_x(R_1) R_2).
\end{align*}
The weight $\lambda$ of each of these relative invariants satisfies
\[\lambda(X_3)=w,\qquad  \lambda(X_5)=w(y_0-2y_1),\qquad  \lambda(X_6)=w(z_0-2 z_1) ,\]
for some integer $w$. For $R_1,R_2,R_{3a}, R_{3b}$, we have $w=2,w=6,w=6, w=9$, respectively.

\begin{theorem} \label{th:InvarSpaceform}
The field of absolute differential invariants is generated by the differential invariants $I_2 = \frac{R_2}{R_1^3}$, $I_{3a} = \frac{R_{3a}}{R_1^3}$ and the invariant derivation $\nabla= \frac{e^{x/2} R_{3b}}{R_1^5} D_x$.
\end{theorem}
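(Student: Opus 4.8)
The strategy is to follow, step by step, the proof of Theorem~\ref{th:InvarMinkowski}, since the $\mathfrak{so}(4)$-action \eqref{L4} shares the qualitative orbit structure of the Minkowski action \eqref{L3}: both Lie algebras are $6$-dimensional, both act transitively on $J^0$, and in both cases the generic orbit in $J^k$ has dimension $6$. First I would verify invariance at the infinitesimal level, checking $X^{(3)}(I_2)=X^{(3)}(I_{3a})=0$ and $[X^{(\infty)},\nabla]=0$ for each of the six basis fields $X\in\g$ listed in \eqref{L4}. Because $I_2,I_{3a}$ are rational in the jet variables $y_i,z_i$ (with the single transcendental $e^x$ entering through $R_1$ and $R_{3a}$), the simultaneous vanishing of all these Lie derivatives is equivalent to genuine invariance; likewise $[X^{(\infty)},\nabla]=0$ forces $\nabla$ to commute with the flows and hence to carry invariants to invariants.

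Next I would carry out the dimension count. Since $\dim J^k=2k+3$ and the generic orbit is $6$-dimensional, the generic orbit has codimension $2k-3$, so the field of absolute invariants of order $\leq k$ has transcendence degree $2k-3$ over $\C$ for $k\geq 2$. I would then confirm that the proposed generators already realize this: the iterated derivatives $\nabla^{r}(I_2)$ and $\nabla^{r-1}(I_{3a})$ furnish, together with $I_2$ and $I_{3a}$, exactly $2k-3$ algebraically independent invariants in each order $k\geq 2$. As in the Minkowski case, this is underpinned by the coarse orbit picture: transitivity on $J^0$, the two orbits $\{R_1=0\}$ and its complement on $J^1$, the generically codimension-one orbits on $J^2$ with $\Sing^2=\{R_2=0\}$, and $6$-dimensional orbits away from $\Sing^k=\pi_{k,2}^{-1}(\Sing^2)$ for $k>2$.

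Finally I would rule out any algebraic extension of the field generated by $I_2,I_{3a},\nabla$ inside the full field of rational invariants. The decisive observation, borrowed from Theorem~\ref{th:InvarMinkowski}, is that for $k\geq 3$ the invariants $\nabla^{r}(I_2)$ and $\nabla^{r-1}(I_{3a})$ are linear when restricted to the fibers of $J^{k+1}\to J^{k}$. This linearity means each new order contributes its two fresh invariants rationally, with no radical or other algebraic relation forced, so the stated generators produce the whole field in the differential-field sense and it suffices to establish the claim at order $3$, where it reduces to a finite direct computation.

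The main obstacle I anticipate is the bookkeeping with the exponential factors. In contrast to \eqref{L3}, the weight cocycle here is inhomogeneous, $\lambda(X_3)=w$, $\lambda(X_5)=w(y_0-2y_1)$, $\lambda(X_6)=w(z_0-2z_1)$, and the relative invariants carry explicit powers of $e^{x}$: indeed $R_1=e^x+4y_1z_1$, the quantity $R_{3a}$ contains $e^{2x}$ and $e^{x}$, and $R_{3b}$ is prefixed by $e^{x/2}$. Consequently the verification $[X^{(\infty)},\nabla]=0$ is delicate: the explicit factor $e^{x/2}$ in $\nabla=\frac{e^{x/2}R_{3b}}{R_1^5}D_x$, on top of the one already present in $R_{3b}$, is precisely what compensates the action of $X_3=2\p_x+y\p_y+z\p_z$ on the exponentials, and one must track these factors carefully to confirm both the commutation identity and the linearity-in-top-jet property that closes the argument.
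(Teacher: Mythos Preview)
Your proposal is correct and matches the paper's approach exactly: the paper explicitly omits the proof of this theorem, stating that the investigation of orbits is ``completely analogous to that of \eqref{L3}'' and that the proofs are therefore omitted, so your plan to replicate the argument of Theorem~\ref{th:InvarMinkowski} step by step is precisely what is intended. Your attention to the exponential bookkeeping is apt, and the paper's footnote (observing that the stabilizer of a point in $J^0$ is $\mathfrak{so}(3)$ in both cases) gives a conceptual reason why the orbit analysis transfers verbatim.
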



The description of orbits on $J^k$ for small $k$ follows that of the previous Lie algebra, step by step.\footnote{This can be explained by the fact that both Lie algebras preserve a metric of constant curvature. Due to transitivity of $\mathfrak g$ on $J^0$, the differential invariants of order $k$ are in one-to-one correspondence with the invariants of the prolonged action on $J^k$ of the stabilizer of a
point in $J^0$, which is $\mathfrak{so}(3)$ in both cases.} The action is transitive on $J^0$. On $J^1$, there are two orbits: $\Sing^1=\{R_1=0\}$ and its complement.  On $J^2$ there is one 4-dimensional orbit, namely $\{R_1=0,D_x(R_1)=0,z_1 y_2-y_1 z_2=0\}$, and two 5-dimensional orbits. The Zariski closures of the latter ones are
\[\{y_2-y_1 (1+2 e^{-x} y_1 z_1 )=0,z_2-z_1 (1+2 e^{-x} y_1 z_1 )=0\} \text{  and  } \{R_1=0,D_x(R_1)=0\}.\] The intersection of these two sets is the unique 4-dimensional orbit. The orbits in general position are 6-dimensional. All three orbits of dimension less than 6 lie inside the subset $\{R_2=0\} \subset J^2$. Set $\Sing^2=\{R_1 R_2=0\}$. For  $k >2$, all orbits lying outside $\Sing^k=\pi_{k,2}^{-1}(\Sing^2)$ are 6-dimensional. 

Consider the $\mathfrak g$-invariant subset
\[ \Sigma^k = \{R_1=0,D_x(R_1)=0,\dots,D_x^{k-1}(R_1)=0\} \subset J^k.\]
There is a conditional relative invariant $Q_2= z_1 y_2-y_1 z_2$. 
The vanishing of $Q_2$ on $\Sigma^2$ is equivalent to the vanishing of $R_{3a}|_{\Sigma^2}$ and gives exactly the set on $\Sigma^2$ where the orbit dimension drops (down to 4). The $\mathfrak g$-action is transitive on $\Sigma^3\setminus \{Q_2\}$. Thus orbits are 6-dimensional on $\Sigma^k \setminus\{Q_2\}$ for $k \geq 3$.

The conditional absolute invariants on $\Sigma^k$ are generated by
\[J_4= \frac{e^x \left(4 Q_2 D_x^2(Q_2)-7 D_x(Q_2)^2+2Q_2 D_x(Q_2) \right)}{Q_2^3} -16 e^{-x} Q_2 \]
and the invariant derivation
\[\nabla_\Sigma 
=\frac{D_x(J_4)}{Q_2} e^x D_x.\]

Consider now the $\mathfrak g$-invariant underdetermined ODE
\[ \Pi^k = \{R_2=0, D_x(R_2)=0, \dots , D_x^{k-2} (R_2)=0\} \subset J^k .\]
The orbit dimension on $\Pi^2$ drops on the union of the subset \[
\{y_2-y_1 (1+2 e^{-x} y_1 z_1 )=0,z_2-z_1 (1+2 e^{-x} y_1 z_1 )=0\} \cup \{R_1=0,D_x(R_1)=0\},\]
 and the complement of this subset is a 6-dimensional orbit. The first of these components is a determined system, while the second component is the intersection of $\Pi^2$ with $\Sigma^2$.

We have a third-order conditional absolute differential invariant
\begin{gather*}
K_3= \frac{1}{(4 y_1 z_1+e^{x}) (2 y_1^2 z_1+e^{x} (y_1-y_2))^2} \Big( (y_1-y_2) (y_1-3 y_2+2 y_3)  e^{3x} +\big((8 z_1-4 z_2) y_1^3 \\
+(4(3 z_2-5 z_1) y_2+4 y_3 (z_1+z_2)) y_1^2-4(3 y_2^2 z_2+ y_2 y_3 z_1) y_1+12 y_2^3 z_1\big)  e^{2x}+12 y_1^3 z_1^2 (y_1-2 y_2)  e^{x} \Big)
\end{gather*}
and the conditional invariant derivation $\nabla_{\Pi}=\frac{e^x}{R_1} D_x(K_3) D_x$. These generate the conditional differential invariants. Notice that $\Pi^k$ is in general reducible. But similar to what was explained in  Remark \ref{remark:connected}, most of the ODE systems contained in the
irreducible components have already been considered in the discussion concerning $\Sigma^k$, and the remaining ones are given by the conditional differential invariants.

%

 \begin{theorem} \label{th:invarEqSpaceform}
Let $\E\subset J^k$ be a $\mathfrak g$-invariant determined ODE system of type \eqref{eq:GeneralSystem},
given by functions $F \in \mathcal{O}(J^k),G \in \mathcal{O}(J^l)$ of orders $k$ and $l\leq k$. Then,
either $F$ and $G$ can be expressed through rational absolute differential invariants or
the system takes one of the following forms:
\begin{itemize}
 \item $G=R_1$ and either $F=Q_2$ or $F$ is a function of the conditional absolute invariants which are generated by $\nabla_\Sigma$ and $J_4$.
 \item $G=R_2$ and $F$ is a function of the conditional absolute invariants which are generated by $\nabla_{\Pi}$ and $K_3$.
 \item $G=y_2-y_1 (1+2 e^{-x} y_1 z_1)$ and $F=z_2-z_1 (1+2 e^{-x} y_1 z_1)$.
\end{itemize}
 \end{theorem}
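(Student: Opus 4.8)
The plan is to follow the four-step strategy of \S\ref{sect:algorithm}, reusing the orbit description established in the paragraphs immediately preceding the theorem and mirroring the (completely analogous) treatment of the Minkowski case in Theorem \ref{th:invarEqMinkowski}. The starting point is that, by Theorem \ref{th:InvarSpaceform}, the rational absolute invariants $I_2, I_{3a}$ together with the derivation $\nabla$ separate the generic $6$-dimensional orbits lying outside $\Sing^k = \pi_{k,2}^{-1}(\{R_1 R_2 = 0\})$. Hence, if a $\mathfrak g$-invariant determined system $\E$ is not strictly contained in $\Sing^\infty$, its defining functions $F$ and $G$ can be written through $I_2, I_{3a}$ and their $\nabla$-derivatives; this yields the generic alternative. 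It then remains to classify the determined systems lying inside $\Sing^\infty$.

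Since $\Sing^m = \{R_1 R_2 = 0\}$ stabilizes by \cite{KL}, every invariant determined system contained in $\Sing^\infty$ must lie inside $\Sigma^k = \{R_1 = 0, \dots, D_x^{k-1}(R_1)=0\}$ or $\Pi^k = \{R_2 = 0, \dots, D_x^{k-2}(R_2)=0\}$, so I would analyze the prolonged $\mathfrak g$-action restricted to each. On $\Sigma^k$ the action is transitive on $\Sigma^1$, and the orbit dimension drops only along $\{Q_2 = 0\}$, which on $\Sigma^2$ produces the single $4$-dimensional orbit $\{R_1 = 0, D_x(R_1)=0, Q_2 = 0\}$; elsewhere on $\Sigma^k$ (for $k \ge 3$) the orbits are $6$-dimensional and separated by the conditional absolute invariants generated by $J_4$ and $\nabla_\Sigma$. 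Consequently a determined system with $G = R_1$ has either $F = Q_2$ or $F$ a function of these conditional absolute invariants. The same analysis on $\Pi^k$ yields the conditional absolute invariants generated by $K_3$ and $\nabla_\Pi$, producing the case $G = R_2$.

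The third case, where $F$ and $G$ are both of order $2$, is obtained as in step (4) of the strategy: one searches for a codimension-$2$ determined system $\{F = 0, G = 0\} \subset J^2$ lying strictly inside $\{R_1 R_2 = 0\}$. From the orbit list on $J^2$ this is precisely the $5$-dimensional orbit whose Zariski closure is $\{y_2 - y_1(1+2e^{-x}y_1 z_1) = 0,\ z_2 - z_1(1+2e^{-x}y_1 z_1) = 0\}$, giving the last alternative.

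I expect the main obstacle to be establishing \emph{completeness}, namely verifying that no further determined systems are hidden inside the singular strata. As in Remark \ref{remark:connected}, the sets $\Pi^k$ are reducible for $k \ge 3$, so one must decompose each into irreducible components and check that every component either reduces to a system already accounted for on $\Sigma^k$ or is captured by the conditional invariants $K_3, \nabla_\Pi$. The clean way to control this—and the reason the argument runs in perfect parallel to the Minkowski case—is that $\mathfrak g$ acts transitively on $J^0$ with isotropy $\mathfrak{so}(3)$, exactly as for \eqref{L3}; hence the fiberwise orbit structure of $J^k \to J^0$ is isomorphic to that of the Minkowski action, and the stratification, the conditional invariants, and their completeness transfer verbatim.
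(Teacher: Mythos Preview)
Your proposal is correct and follows essentially the same approach as the paper: the paper states explicitly that the orbit analysis is ``completely analogous'' to the Minkowski case and therefore omits the proofs, collecting the results into the theorem just as you outline, and it justifies the parallel by the same observation you make about the common $\mathfrak{so}(3)$ isotropy on $J^0$ (see the footnote preceding the orbit description). Your handling of the reducibility of $\Pi^k$ and the codimension-$2$ search on $J^2$ also matches the paper's treatment.
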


Note that the last system $\{G=0,F=0\}$ is point-equivalent to $\{y_2=0,z_2=0\}$. As in \S\ref{sect:Minkowski} solutions of this system are the geodesics of the metric preserved by $\mathfrak g$. Again, $R_1=0$ gives exactly the null curves. The solutions of the determined system $R_1=0,Q_2=0$ are null geodesics. And $R_2=0$ expresses zero square curvature of a curve in a space form.

\subsection{A 3D action of $\mathfrak{sp}(4)$}  \label{sect:sp4}

 Let now $\mathfrak g$ denote Lie algebra \eqref{L5}. The first relative invariants $\mathfrak g$ are the following:
\begin{align*}
R_1&= y z_1 - z y_1 +1 \\
R_3 &=z_2 y_3- y_2 z_3\\
R_4 &= 3 R_1^2 D_x(R_3)^2+4 R_1 R_3 (4 R_1 ( y_3 z_4-  z_3 y_4) +3 D_x(R_1) D_x(R_3)) \\&-4 R_3^2 \left(16 R_1 (y z_3-z y_3)-15 D_x(R_1)^2+24 R_1 ( y_1 z_2- z_1 y_2 )\right)\\
R_{5a} &= 2 R_1 R_3 (z_2 y_5-y_2 z_5)-3 R_1 D_x(R_3)^2 +4 R_3^2 (y z_3-z  y_3+z_1 y_2-y_1 z_2) \\&+2 R_3 \left(3 R_1 (z_3 y_4-y_3 z_4)-D_x(R_1) D_x(R_3)\right) \\
R_{5b} &= R_1 R_3 D_x(R_4)-\left(R_3 D_x(R_1)+\tfrac{5}{2} R_1 D_x(R_3)\right)R_4.
\end{align*}
The weight lattice in this case is two-dimensional, and given by
 \begin{gather*}
\lambda(X_2)=w_1 z_1, \qquad \lambda(X_3)=-w_1 y_1, \qquad \lambda(X_7)=-w_2, \\
\lambda(X_8)=(w_1-w_2) y-w_1 xy_1, \qquad \lambda(X_9)=(w_1-w_2) z-w_1 x z_1, \qquad \lambda(X_{10})=-w_2 x.
 \end{gather*}
For invariants $R_1, R_3, R_4, R_{5a}, R_{5b}$ the weights $(w_1,w_2)$ are $(1,0), (6,8), (16,20), (15,20), (24,30)$.

 \begin{theorem} \label{th:Invarsp4}
The field of absolute differential invariants is generated by $I_4=\frac{R_4^2}{R_1^2 R_3^{5}},I_{5a}=\frac{R_{4} R_{5a}}{R_1 R_3^{5}}$ and the invariant derivation $\nabla=\frac{R_{5b}}{R_1 R_3^4} D_x$.
 \end{theorem}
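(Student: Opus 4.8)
The plan is to follow the strategy used for Theorem~\ref{th:InvarMinkowski}, adapted to the two-dimensional weight lattice of the present action. First I would verify that $I_4$, $I_{5a}$ are genuine absolute invariants and that $\nabla$ is a genuine invariant derivation. Since all these quantities are rational it suffices to check this infinitesimally, i.e. $X^{(4)}(I_4)=X^{(5)}(I_{5a})=0$ and $[X^{(\infty)},\nabla]=0$ for every $X\in\mathfrak g$. In fact this is immediate from the recorded weights: a weight-zero monomial in the $R_i$ is automatically an absolute invariant, and the exponents in $I_4=R_4^2R_1^{-2}R_3^{-5}$ and $I_{5a}=R_4R_{5a}R_1^{-1}R_3^{-5}$ annihilate the total weight, since $2(16,20)-2(1,0)-5(6,8)=(0,0)$ and $(16,20)+(15,20)-(1,0)-5(6,8)=(0,0)$. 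The coefficient $R_{5b}R_1^{-1}R_3^{-4}$ of $\nabla$ has weight $(-1,-2)$, which is precisely the weight cancelling that of $D_x$: on the scaling generator $X_7=2x\p_x+y\p_y+z\p_z$ one has $[X_7^{(\infty)},D_x]=-2D_x$, so $[X_7^{(\infty)},fD_x]=(X_7^{(\infty)}(f)-2f)D_x$, which vanishes exactly when $\lambda_f(X_7)=2$, i.e. $w_2=-2$.

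Second, I would pin down the generic orbit dimension. The orbit dimension is non-decreasing under the projections $J^{k+1}\to J^k$ and is bounded by $\dim\mathfrak g=10$, so it suffices to exhibit a point of $J^4$ at which the ten prolonged fields $X^{(4)}$ span a $10$-dimensional subspace; this is a single nonvanishing $10\times10$ minor of the $10\times11$ coefficient matrix. Consequently the generic orbit is $10$-dimensional on every $J^k$ with $k\ge4$, and the number of algebraically independent rational invariants of order $\le k$ equals $\dim J^k-10=2k-7$. The generated family realizes this count exactly: $I_4$ accounts for the unique invariant of order $4$, and for each $k\ge5$ the two invariants $\nabla^{\,k-4}I_4$ and $\nabla^{\,k-5}I_{5a}$ are new, giving $1+2(k-4)=2k-7$ invariants of order $\le k$. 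Their algebraic independence follows by inspecting the Jacobian with respect to the top jet variables.

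Third, to upgrade this transcendence basis to a generating set, i.e. to rule out a proper algebraic extension inside the field of rational invariants, I would use the same fiber-linearity argument as in the Minkowski case. For $k\ge4$ the invariants $\nabla^{\,k-3}I_4$ and $\nabla^{\,k-4}I_{5a}$ of order $k+1$ are affine-linear in the top jets $(y_{k+1},z_{k+1})$ along the fibers of $J^{k+1}\to J^k$, and two affinely independent such functions generate the full field of rational functions on the two-dimensional fiber. Hence no algebraic extension can arise above order $5$, and the claim reduces to a direct check that the field of rational invariants of order $\le5$ is already generated by $I_4$, $I_{5a}$ and $\nabla(I_4)$, which is a finite computation.

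The main obstacle I anticipate is the bookkeeping forced by the rank-two weight lattice: unlike the rank-one situations of \S\ref{sect:contact}, \S\ref{sect:point} and \S\ref{sect:Minkowski}, here two relative invariants $R_1,R_3$ of low order (orders $1$ and $3$) govern the singular loci, and one must cancel both weight components simultaneously to form absolute invariants, which delays the first invariants to orders $4$ and $5$. Moreover, because $\dim J^k=2k+3$ is always odd whereas $\dim\mathfrak g=10$ is even, the clean Lie-determinant computation available in the scalar case does not transfer verbatim; establishing local freeness on $J^4$ and correctly identifying the top-jet behaviour of $\nabla^{\,j}I_4$ and $\nabla^{\,j}I_{5a}$ is the technical crux of both the counting and the fiber-linearity steps.
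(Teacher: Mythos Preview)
Your approach is essentially the one the paper uses (it does not spell out a proof here, but the template is that of Theorem~\ref{th:InvarMinkowski}): verify invariance infinitesimally via weight cancellation, count orbit codimensions to get a transcendence basis, then rule out algebraic extensions by fiber-linearity above a finite base order. Two small corrections are in order.

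First, checking the coefficient of $\nabla$ only against $X_7$ does not suffice; invariance of $fD_x$ requires $X^{(\infty)}(f)=D_x(a_X)\,f$ for every basis vector $X=a_X\partial_x+\cdots$, and you should record that the weight formula with $(w_1,w_2)=(-1,-2)$ indeed reproduces $D_x(a_X)$ for all ten $X_i$ (it does: for instance $D_x(xy)=y+xy_1=\lambda(X_8)$ and $D_x(x^2)=2x=\lambda(X_{10})$).

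Second, your fiber-linearity claim fails at $k=4$: from $D_x(\log I_4)=2R_{5b}/(R_1R_3R_4)$ one gets $\nabla(I_4)=2R_{5b}^2 R_4/(R_1^4R_3^{10})$, which is quadratic in $(y_5,z_5)$ since $R_{5b}$ is linear in them. So the linearity argument only kicks in for $k\ge5$, where the order-$5$ coefficient of $\nabla$ is constant along the fiber $J^{k+1}\to J^k$. You already planned to absorb order $\le5$ into a direct base-case check, so this does not damage the proof; just change ``for $k\ge4$'' to ``for $k\ge5$'' and keep the finite verification at orders $4$ and $5$.
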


 \begin{remark}
Although the derivation $R_1^{1/2} R_3^{-1/4} D_x$ commutes with the prolonged action of $\mathfrak g$, it is not invariant since
it changes sign under the transformation $(x,y,z)\mapsto(-x,-y,z)$. The multivalued functions $\frac{R_4}{R_1 R_3^{5/2}}$ and $\frac{R_{5a}}{R_3^{5/2}}$ are infinitesimally invariant, but they change sign under the transformation $(x,y,z) \mapsto (ix,iy,z)$. Both of these transformations belong to the flow of $x \partial_x+y\partial_y$, which is contained in our Lie algebra of vector fields.
 \end{remark}

We analyse the orbits on $J^k$ for low $k$. On $J^1$, the orbits are $\Sing^1=\{R_1=0\}$ and its complement.
On $J^2$, there is one 7-dimensional orbit. The set on which the orbit-dimension drops is $\Sing^2 = \{R_1=0\} \cup \{y_2=0,z_2=0\}$. The set on which the orbit-dimension is $\leq 5$ is $\{R_1=0,D_x(R_1)=0\} \cup \{y_2=0,z_2=0\}$.  The two irreducible components of $\Sing^2$ intersect in the unique 4-dimensional orbit $\{R_1=0, D_x(R_1)=0, z_1 y_2-y_1 z_2=0\}$.
Thus we have two determined systems on $J^2$, similar to the previous cases.

On $J^3$ there is one 9-dimensional orbit. The set on which the orbit-dimension drops is $\Sing^3=\{R_1 R_3=0\}$.
The radical polynomial ideal of the set of orbits with $\dim\leq7$ contains the element $z_2 R_1$, hence
there are no determined systems of codimension 2 in $J^3$.

The space $J^4$ is 11-dimensional, and generic orbits are 10-dimensional. The set of points where the orbit dimension is less than or equal to 9 is not easy to describe precisely. However, it can be checked that its radical ideal contains the polynomial $R_1 R_3$. Thus we set $\Sing^4= \pi_{4,3}^{-1}(\Sing^3)$, and in general $\Sing^{k}=\pi^{-1}_{k,3}(\Sing^3)$ for $k \geq 4$. All orbits outside $\Sing^k$ are 10-dimensional for $k \geq 4$.

Let $\Sigma^k$ denote the underdetermined system given by $R_1=0$:
\[ \Sigma^k= \{R_1=0,D_x(R_1)=0,\dots,D_x^{k-1}(R_1)=0\} \subset \pi_{k,1}^{-1}(\Sing^1) \subset J^k.\]
The $\mathfrak g$-action is transitive on $\Sigma^1$. On $\Sigma^2$ there is one 5-dimensional orbit and one 4-dimensional orbit. The latter is given by the vanishing of $Q_2=z_1 y_2 - y_1 z_2$. (On $\Sigma^3$, the vanishing of $Q_2$ is equivalent to that of $R_{3}|_{\Sigma^3}$.) The set $\Sigma^3$ consists of one 6-dimensional orbit, one 5-dimensional orbit, whose Zariski closure is given by $Q_2=0$, and one 4-dimensional orbit given by $Q_2=0, D_x(Q_2)=0$.

On $\Sigma^4$ there is no invariant submanifold of codimension 1 given by additional constraints of order 4. The same goes for $\Sigma^5$. On $\Sigma^6$, there is one 9-dimensional orbit. The orbit dimension drops exactly on the set $Q_2 Q_6=0$, where $Q_6$ is given by
 \begin{align*}
 Q_6 &=10 Q_2^3 D_x^4(Q_2)-70 Q_2^2 D_x(Q_2) D_x^3(Q_2) +\left(16 Q_2^4+280 Q_2 D_x(Q_2)^2\right) D_x^2(Q_2) \\& -49 Q_2^2 D_x^2(Q_2)^2-175 D_x(Q_2)^4-20 Q_2^3 D_x(Q_2)^2+9 Q_2^6.
 \end{align*}
%

 On $\Sigma^7$ the complement of (the Zariski closure of) $\{Q_2 Q_6=0\}$ is a 10-dimensional orbit, and on $\Sigma^k$ for $k\geq 8$ the complement of $\{Q_2 Q_6=0\}$ consists of 10-dimensional orbits. The conditional absolute invariants are generated by
 \[J_8=\frac{Q_8^2}{Q_6^5}, \qquad \nabla_{\Sigma}= \frac{Q_2^2 \left( 2 Q_6 D_x( Q_8)-5 Q_8 D_x(Q_6)\right)}{Q_6^4} D_x,\]
 where $Q_8$ is given by
\begin{align*}
Q_8 &= 40 Q_2^2 Q_6 D_x^2(Q_6)-45 Q_2^2 D_x(Q_6)^2+40 Q_2 D_x(Q_2) Q_6 D_x(Q_6) \\
&-\left(32 Q_2^3+224 Q_2 D_x^2(Q_2)-160 D_x(Q_2)^2\right) Q_6^2.
\end{align*}

The absolute invariant
\[\nabla_{\Sigma}(J_8) = \frac{Q_2^2 Q_8 \left(2 Q_6 D_x(Q_8)-5Q_8D_x(Q_6)\right)^2}{Q_6^{10}}\]
is of degree 2 when restricted to a fiber of $J^9 \to J^8$. Due to the factor $Q_8$, there is no algebraic extension of the field
$\langle J_8,\nabla_{\Sigma}(J_8)\rangle$ of rational conditional invariants that contains an invariant of lower degree on this fiber. On the other hand, $\nabla_\Sigma(\nabla_\Sigma(J_8))$ is linear on the fibers of $J^{10}\to J^9$. Thus, there is no nontrivial extension of the field generated by $J_8$ and $\nabla_\Sigma$ inside the field of rational conditional absolute invariants.

Now consider the $\mathfrak g$-action on
 \[
\Pi^k= \{R_3=0, D_x(R_3)=0,\dots, D_x^{k-3}(R_3)=0\} \subset \pi_{k,3}^{-1}(\Sing^3) \subset J^k.
 \]
There is one 8-dimensional orbit on the complement of $\{y_2=0,z_2=0\} \cup \{R_1=0,R_3=0\}$ in $\Pi^3$.
The set $\Pi^4$ consists of two irreducible 9-dimensional components:
\[ \{y_2=0, z_2=0\}, \qquad \{R_3=0, D_x(R_3)=0, z_3 y_4-y_3 z_4=0\}.\]
The first of these was discussed above. In order to avoid this component (and higher-order analogues), we use coordinates $x,y,y_1,\dots, y_k,z, z_1, z_2$ on $\Pi^k$ and replace in our formulas all occurrences of $z_i$ for $i \geq 3$ using $z_3=y_3 z_2/y_2$ and its derivatives.

There is a conditional relative invariant of order 4:
 \begin{align*}
 P_4  &= (3 z_2 z_4-4 z_3^2) R_1^2-6 z_2 \left(z_3 D_x(R_1)+3 z_2 (y_1 z_2-y_2 z_1)\right) R_1+9 z_2^2 D_x(R_1)^2.
 \end{align*}

Orbits are 9-dimensional on the complement of $P_4=0$ in $\Pi^4\setminus\{y_2=0\}$. Also on $\Pi^5\setminus\{y_2=0\}$, the orbits are 9-dimensional on the complement of $P_4$. The two conditional absolute invariants on $\Pi^k$ are generated by the conditional absolute invariants
\begin{align*}
 K_5 &= \frac{z_2^6 R_1^6 (9 y_2^2 y_5-45 y_2 y_3 y_4+40 y_3^3)^2}{y_2^6 P_4^3}, \\
 K_6 &= \frac{z_2^4 R_1^3 }{y_2^4 P_4^2} \Big((9 y_2^3 y_6-72 y_2^2 y_3 y_5-45 y_2^2 y_4^2+300 y_2 y_3^2 y_4-200 y_3^4) R_1 \\
 &+(27 y_2^3 y_5-135 y_2^2 y_3 y_4+120 y_2 y_3^3) D_x(R_1)  \Big),
 \end{align*}
 and the conditional invariant derivation
 \[\nabla_\Pi= D_x(K_5)^{-1} D_x.\]

 \begin{theorem}\label{tm24}
Let $\E\subset J^k$ be a $\mathfrak g$-invariant determined ODE system as in \eqref{eq:GeneralSystem},
given by functions $F \in \mathcal{O}(J^k),G \in \mathcal{O}(J^l)$ of orders $k$ and $l\leq k$. Then,
either $F$ and $G$ can be expressed through rational absolute differential invariants or
the system takes one of the following forms:
 \begin{itemize}
 \item $G=R_1$ and  $F=Q_2$, $F=Q_6$ or $F$ is a function of the conditional absolute invariants which are generated by $J_8$ and $\nabla_\Sigma$.
 \item $G=R_3$ and  $F=P_4$ or $F$ is a function of the conditional absolute invariants which are generated by $K_5$, $K_6$ and $\nabla_{\Pi}$.
 \item $G=y_2$ and $F=z_2$.
 \end{itemize}
 \end{theorem}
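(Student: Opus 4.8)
The plan is to follow the four-step procedure of \S\ref{sect:algorithm} together with the Sophus Lie theorem and Theorem \ref{th:Invarsp4}, organizing the argument as a stratification of $J^\infty$ by orbit dimension. First I would dispose of the generic alternative: by Theorem \ref{th:Invarsp4} the rational absolute invariants $I_4, I_{5a}$ together with $\nabla$ separate $\mathfrak g$-orbits in general position, so whenever $\mathcal E$ is not entirely contained in $\Sing^\infty$ its defining functions $F$ and $G$ are expressible through rational absolute differential invariants. This accounts for the first clause of the statement and reduces everything to the case $\mathcal E \subset \Sing^k$.

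Next I would exploit the structure $\Sing^k = \pi_{k,3}^{-1}(\Sing^3)$ with $\Sing^3 = \{R_1 R_3 = 0\}$, established in the orbit analysis preceding the theorem. A determined system inside $\Sing^k$ must lie in the prolongation of $\{R_1 = 0\}$ or of $\{R_3 = 0\}$; hence either $R_1=0$ holds on $\mathcal E$ (the system lies in $\Sigma^k$ and we take $G = R_1$), or $R_3=0$ holds (the system lies in $\Pi^k$ and we take $G = R_3$), or $\mathcal E$ is a codimension-$2$ locus supported on a lower stratum of $\Sing^2$ with $k=l$. For this last possibility, the small-order analysis isolates $\{y_2 = 0, z_2 = 0\}$ on $J^2$, while on $J^3$ the radical ideal of the locus where $\dim \leq 7$ already contains $z_2 R_1$, ruling out further codimension-$2$ determined systems there; this yields the third clause.

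For the two underdetermined strata I would invoke the conditional-invariant computations carried out above. On $\Sigma^k$ the orbit dimension drops precisely on $\{Q_2 Q_6 = 0\}$, so the determined systems with $G = R_1$ are $F = Q_2$, $F = Q_6$, or $F$ a function of the conditional absolute invariants generated by $J_8$ and $\nabla_\Sigma$. On $\Pi^k$ one must first separate off the reducible component $\{y_2 = 0, z_2 = 0\}$ (already counted), which I would do via the substitution $z_3 = y_3 z_2 / y_2$ and its total derivatives; on the remaining component the orbit dimension drops exactly on $\{P_4 = 0\}$, giving $F = P_4$ or $F$ a function of the conditional absolute invariants generated by $K_5, K_6$ and $\nabla_\Pi$. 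Assembling the three cases produces the stated trichotomy.

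The hard part is twofold. First, one must prove \emph{completeness} of each list of conditional generators, i.e.\ that $J_8, \nabla_\Sigma$ (respectively $K_5, K_6, \nabla_\Pi$) generate the whole field of rational conditional absolute invariants with no algebraic field extension; for $\Sigma^k$ this is handled by noting that $\nabla_\Sigma(J_8)$ has degree $2$ on the fibers of $J^9 \to J^8$ but, because of the factor $Q_8$, admits no lower-degree invariant refinement, while $\nabla_\Sigma(\nabla_\Sigma(J_8))$ becomes linear on the fibers of $J^{10} \to J^9$, and an analogous fiber-degree argument is required on $\Pi^k$. Second, one must verify that the singular sets $\Sing^i, \Sigma^i, \Pi^i$ have been correctly identified: they may strictly exceed the orbit-dimension-drop loci when closures of distinct top-dimensional orbits meet in a common lower orbit, so the identification rests on the explicit (Maple-assisted) minor and radical-ideal computations summarized earlier. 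These completeness and singular-locus verifications, rather than the mere listing of invariants, constitute the principal technical burden.
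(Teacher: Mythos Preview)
Your proposal is correct and follows essentially the same approach as the paper: the theorem is obtained by assembling the orbit analysis, the identification of $\Sing^k=\pi_{k,3}^{-1}\{R_1R_3=0\}$, and the conditional-invariant computations on $\Sigma^k$ and $\Pi^k$ carried out in \S\ref{sect:sp4}, exactly as you outline. Your identification of the two genuine technical burdens (completeness of the conditional generators and correctness of the singular-locus determinations) matches the paper's own emphasis on these Maple-assisted verifications.
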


The equation $R_1=0$ describes Legendrian curves, while $R_3=0$ describes curves lying in a plane;
and the system $\{y_2=0,z_2=0\}$ gives indeed all straight lines in $\C^3$.

\subsection{Foliation-preserving algebra Lie{\hskip1pt}16} \label{sect:Lie16}

For Lie algebra $\mathfrak g$ defined by \eqref{L6}, the functions
 \[
R_1=z_1-y, \qquad R_2=y_2, \qquad R_{3a}=(z_2-y_1) y_3-y_2 z_3
 \]
are relative differential invariants. The weight $\lambda$ of any of these is of the form
 \[
\lambda(X_4)=-w, \qquad \lambda(X_5)=-w y_1,
 \]
with $w$ equal to $1$, $3$ and $6$, respectively.
By computations similar to above we get:

 \begin{theorem} \label{th:Invar16}
The field of rational absolute differential invariants is generated by
$I_2=R_2/R_1^3$, $I_{3a}=R_{3a}/R_1^6$ and $\nabla=R_1^{-1} D_x$.
 \end{theorem}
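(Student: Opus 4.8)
The plan is to follow the same three-step scheme used in the proof of Theorem~\ref{th:InvarMinkowski}. First I would reduce the invariance of the proposed generators to an infinitesimal verification. Writing $\mu$ for the cocycle with $\mu(X_4)=-1$, $\mu(X_5)=-y_1$ and $\mu(X_i)=0$ otherwise, the stated weight data $\lambda(X_4)=-w$, $\lambda(X_5)=-wy_1$ say exactly that $R_1,R_2,R_{3a}$ are relative invariants of weights $w=1,3,6$ with respect to $\mu$, i.e. $X^{(\infty)}(R_1)=\mu(X)R_1$, $X^{(\infty)}(R_2)=3\mu(X)R_2$ and $X^{(\infty)}(R_{3a})=6\mu(X)R_{3a}$ for every $X\in\mathfrak g$. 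Consequently $I_2=R_2/R_1^3$ and $I_{3a}=R_{3a}/R_1^6$ have weight $0$ and are genuine rational absolute invariants. For the derivation I would use that for a point field $X=\xi\partial_x+\eta\partial_y+\zeta\partial_z$ one has $[X^{(\infty)},D_x]=-D_x(\xi)\,D_x$, and since $D_x(\xi)=-\mu(X)$ on the generators $X_4,X_5$ (and vanishes on the rest), a short computation gives
\[
[X^{(\infty)},\nabla]=\bigl(-R_1^{-2}X^{(\infty)}(R_1)+R_1^{-1}\mu(X)\bigr)D_x=0 ,
\]
so $\nabla=R_1^{-1}D_x$ commutes with the prolonged action. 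As all three objects are rational, these infinitesimal identities establish genuine invariance.

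The second step is the transcendence count. The space $J^k$ has dimension $2k+3$, and I would check that the six prolonged generators are generically independent for $k\ge2$, so that the generic $\mathfrak g$-orbit is $6$-dimensional and the field of rational invariants of order $\le k$ has transcendence degree $2k-3$ (at order $1$ the action is generically transitive on $J^1$, consistent with $R_1$ being only relative, not absolute). The functions $\nabla^{j}(I_2)$ for $0\le j\le k-2$ together with $\nabla^{j}(I_{3a})$ for $0\le j\le k-3$ provide exactly $2k-3$ invariants of order $\le k$; their algebraic independence is confirmed by exhibiting a nonvanishing Jacobian with respect to a suitable set of jet coordinates, a direct computation in which transitivity of $\mathfrak g$ on $J^0$ makes $I_2$ the single order-$2$ invariant and organizes the count.

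The final and decisive step is to rule out algebraic field extensions inside the field of rational invariants. Here I would use that $\nabla=R_1^{-1}D_x$ raises the jet order by one and is affine-linear in the top jet variables: for an invariant $f$ of order $m$ one has $\nabla(f)=R_1^{-1}(f_{y_m}y_{m+1}+f_{z_m}z_{m+1})+(\text{lower order})$. Hence $\nabla^{k-2}(I_2)$ and $\nabla^{k-3}(I_{3a})$ are of degree one in the fiber coordinates of $J^{k}\to J^{k-1}$ for $k\ge3$, so each jet-order increment adjoins only purely transcendental generators and introduces no algebraic extension. This reduces the whole statement to order $3$, where $\dim J^3=9$, the generic orbit is $6$-dimensional, and the three invariants $I_2,\nabla(I_2),I_{3a}$ must be shown to generate the full field of rational invariants with no proper algebraic extension; this is a finite, explicit computation analogous to the order-$3$ check in Theorem~\ref{th:InvarMinkowski}.

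The main obstacle is this last no-extension argument: the linearity-on-fibers reduction must be seen to hold uniformly from order $3$ onwards (the affine-linear leading coefficient of $\nabla$ must be nonvanishing off the singular locus $\{R_1=0\}$), and the base case at order $3$ requires verifying that $I_2,\nabla(I_2),I_{3a}$ actually separate generic orbits rather than merely being algebraically independent. Everything else, namely the weight-based invariance identities and the transcendence count, is routine given the relative invariants and their weights already exhibited.
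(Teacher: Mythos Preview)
Your proposal is correct and follows essentially the same three-step scheme as the paper's proof of Theorem~\ref{th:InvarMinkowski}, to which the paper explicitly refers for this result: infinitesimal verification of invariance via the weight cocycle, a transcendence count yielding $2k-3$ independent invariants of order $\le k$, and the no-algebraic-extension argument based on linearity of the top-order invariants on fibers of $J^k\to J^{k-1}$, reducing to a finite check at low order.
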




The $\mathfrak g$-action is transitive on $J^0$. On $J^1$ there is one open 5-dimensional orbit and one 4-dimensional orbit
given by $\Sing^1= \{R_1=0\}$. On $J^2$ orbits in general position are 6-dimensional. The orbit-dimension drops on the set
$\Sing^2= \{R_1R_2=0\}$. Its irreducible components intersect in the 4-dimensional orbit $\{R_1=0, D_x(R_1)=0,R_2=0\}$.
It is clear that $\g$-orbits on $J^k$ are 6-dimensional outside the subset $\Sing^k=\pi_{k,2}^{-1}(\Sing^2)$.

Let us restrict to the underdetermined ODE
 \[
\Sigma^k = \{R_1=0, D_x(R_1)=0, \dots, D_x^{k-1}(R_1)=0\} \subset J^k .
 \]
The action is transitive on $\Sigma^1$. On $\Sigma^2$ there is one open 5-dimensional orbit and one 4-dimensional orbit,
separated by the vanishing of $R_2$. This gives the first invariant determined system: $\{R_1=0,D_x(R_1)=0, R_2=0\}$.
On $\Sigma^3$ there is one 6-dimensional orbit, one 5-dimensional orbit (whose Zariski closure is given by $R_2=0$) and one 4-dimensional orbit, given by $R_2=0$ and $D_x(R_2)=0$. On $\Sigma^4$ generic orbits are 6-dimensional, and the orbit dimension drops on the set given by $R_2=0$.
The conditional absolute invariants on $\Sigma^k$ are generated by
 \[
J_4=\frac{(3 y_2 y_4-5 y_3^2)^3}{y_2^{8}}, 
\qquad \nabla_{\Sigma} = \frac{3 y_2 y_4-5y_3^2}{y_2^3} D_x.
 \]

Notice that $J_4$ is equal to the restriction of
${(3I_2 (\nabla^2(I_2)-3I_{3a})-5\nabla(I_2)^2-9I_2^3)^3}/{I_2^8}$ to $\Sigma^4$.
Thus this can be viewed as an actual absolute invariant, not only conditional. One can also rewrite  $\nabla_{\Sigma}$ in this way,
but then one needs to use $R_1$ in addition. 

Now we restrict to
 \[
\Pi^k = \{R_2=0, D_x(R_2)=0, \dots, D_x^{k-2} (R_2)=0 \} \subset J^k.
 \]
Generic orbits in $\Pi^k$ are 5-dimensional for every $k$, and the dimension drops only when both $R_1=0$ and $D_x(R_1)=0$. The conditional absolute invariants on $\Pi^k$ are generated by $\nabla$ and
\[K_2=\frac{D_x(R_1)}{R_1^2}.\]

 \begin{theorem} \label{th:invarEq16}
Let $\E\subset J^k$ be a $\mathfrak g$-invariant determined ODE system as in \eqref{eq:GeneralSystem},
given by functions $F \in \mathcal{O}(J^k),G \in \mathcal{O}(J^l)$ of orders $k$ and $l\leq k$. Then,
either $F$ and $G$ can be expressed through rational absolute differential invariants or
the system takes one of the following forms:
 \begin{itemize}
\item $G=R_1$ and either $F=R_2$ or $F$ is a function of the conditional absolute invariants generated by $J_4$ and $\nabla_{\Sigma}$. 
\item $G=R_2$ and $F$ is a function of the conditional absolute invariants which are generated by $K_2$ and  $\nabla$.
 \end{itemize}
 \end{theorem}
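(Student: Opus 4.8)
The plan is to run the four-step procedure of \S\ref{sect:algorithm} on the algebra $\mathfrak g$ of \eqref{L6}, feeding it the orbit data and invariants already obtained in this subsection. The statement is exactly the output of that procedure, and its two alternatives mirror the first branch point of the strategy: a determined system $\E$ of type \eqref{eq:GeneralSystem} either meets the generic stratum $J^k\setminus\Sing^k$ in a dense set or it lies entirely in $\Sing^k$. In the former case the $6$-dimensional orbits in general position are separated by $I_2,I_{3a}$ and their $\nabla$-derivatives from Theorem \ref{th:Invar16}, so $F$ and $G$ are functions of rational absolute invariants; this is the generic alternative, and what remains is to classify the systems sitting inside the singular set.

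First I would record that $\Sing^k$ stabilizes as $\Sing^k=\pi_{k,2}^{-1}(\{R_1R_2=0\})$, so it is the union of the two invariant Zariski-closed branches $\{R_1=0\}$ and $\{R_2=0\}$, cut out by the relative invariants $R_1$ (order $1$) and $R_2$ (order $2$). A non-generic determined system $\E$ is contained in $\Sing^k$, hence each of its irreducible components lies in one of the two branches; on the branch $\{R_i=0\}$ the system is obtained by imposing a further invariant constraint $F=0$ on the underdetermined equation $R_i=0$, which in the language of \eqref{eq:GeneralSystem} means $G=R_i$ and $F$ a conditional invariant. This yields precisely the two cases $G=R_1$ and $G=R_2$.

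Next I would treat the two branches by restricting the action and computing conditional invariants, just as in \S\ref{sect:Minkowski}. On $\Sigma^k=\{R_1=0,D_xR_1=0,\dots\}$ the action is transitive on $\Sigma^1$, the single degeneracy on $\Sigma^2$ (orbit dimension dropping to $4$) is cut out by the conditional relative invariant $R_2|_\Sigma$, and the conditional absolute invariants are generated by $J_4,\nabla_\Sigma$; this gives either $F=R_2$ (the system $\{R_1=0,D_xR_1=0,R_2=0\}$) or $F$ a function of $J_4,\nabla_\Sigma$. On $\Pi^k=\{R_2=0,D_xR_2=0,\dots\}$ the generic orbits remain $5$-dimensional for every $k$ and the dimension drops only along $\{R_1=0,D_xR_1=0\}$, which is already the system of the first branch; hence no new conditional relative invariant appears and $F$ must be a function of the conditional absolute invariants generated by $K_2,\nabla$. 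In particular, in contrast with the primitive cases of \S\ref{sect:Minkowski}, no off-branch codimension-$2$ determined system (the analogue of the matrix-cocycle pair $\{y_2=0,z_2=0\}$) arises here, precisely because inside each branch the orbit dimension drops only along the other branch or along a single conditional relative invariant, both already accounted for.

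The checks that the listed functions are invariant and that the low-order orbits have the stated defining equations are direct and parallel to \S\ref{sect:Minkowski}, so I would only indicate them. The main obstacle is the completeness claim on each branch: that $J_4,\nabla_\Sigma$ (respectively $K_2,\nabla$) generate \emph{all} conditional absolute invariants, with no proper algebraic extension inside the field of rational conditional invariants. As in the proof of Theorem \ref{th:InvarMinkowski}, I expect to dispatch this by verifying that a sufficiently high $\nabla_\Sigma$- (respectively $\nabla$-) derivative of each generator restricts to an affine-linear function on the fibers of $\Sigma^{k+1}\to\Sigma^k$ (respectively $\Pi^{k+1}\to\Pi^k$), which collapses the whole verification to a finite computation at low order.
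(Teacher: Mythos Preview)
Your proposal is correct and follows essentially the same approach as the paper: the theorem is obtained by running the strategy of \S\ref{sect:algorithm} on the orbit analysis already carried out in \S\ref{sect:Lie16}, with the two bullets arising from the two irreducible components $\{R_1=0\}$ and $\{R_2=0\}$ of $\Sing^2$, and the absence of a third ``codimension-2'' bullet (the analogue of $\{y_2=0,z_2=0\}$) correctly explained by the fact that the unique $4$-dimensional orbit in $J^2$ is already the system $\{R_1=0,D_xR_1=0,R_2=0\}$ covered by the first bullet. The completeness argument you sketch for the conditional invariants (linearity on fibers reducing to a finite low-order check) is exactly the template used in the proof of Theorem~\ref{th:InvarMinkowski}.
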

\begin{remark}\label{rk:point}
A consequence of choosing coordinates so that the 1-dimensional invariant foliation (given by $x=\text{const},y=\text{const}$) is tangent to the fibers of $J^0$ in our split coordinates (given by $x=\text{const}$) is that it can not be given by equations of the form $y=\tilde y(x), z=\tilde z(x)$. However, after applying the point transformation $(x,y,z) \mapsto (z,y,x)$, the 1-dimensional leaves can be given by $y=\text{const}$ and $z=\text{const}$ and they are thus solutions to the system $y_1=0,z_1=0$. Thus, in addition to the systems described in the theorem above, there is one additional determined system of order 1 if the 1-dimensional foliation is not tangent to the fibers $x=\text{const}$. This phenomenon occurs also for the next two Lie algebras we consider (Lie{\hskip1pt}27 preserves two 1-dimensional foliations, one of which is not tangent to $x=\text{const}$). See also the discussion in \S\ref{sl2}.
\end{remark}


\subsection{Foliation-preserving algebra Lie{\hskip1pt}27}
\label{sect:Lie27}

Consider Lie algebra \eqref{L7} and denote it by $\mathfrak g$. The following are the first relative differential invariants:
 \begin{align*}
R_1 =\ & y_1-z,\\
R_{2a} =\ & y_2, \\
R_{2b} =\ & z_1 y_2+(z-y_1) z_2-2 z_1^2, \\
R_3 =\ & R_1 \left( R_{2b} D_x(R_{2a})- R_{2a} D_x(R_{2b})\right) +3 R_{2a} R_{2b} (D_x(R_1) -R_{2a})), \\
R_{4a} = \ & R_1^2 \left(3 R_{2a} D_x^2(R_{2a})-4 D_x(R_{2a})^2 \right)-3 R_{2a}^2 \left(3 R_{2a}^2-6 R_{2a} D_x(R_1)+2 R_1D_x(R_{2a})\right),\\
R_{4b} = \ & R_1 R_{2a} D_x(R_3)- R_1^2 R_{2a} R_{2b} D_x^2(R_{2a}) -\left(\tfrac{8}{3} R_1 D_x(R_{2a})+4 D_x(R_1) R_{2a}-4 R_{2a}^2\right) R_3 \\
&+ \tfrac{4}{3} R_1^2 R_{2b} D_x(R_{2a})^2+ 2 R_1 R_{2a}^2 R_{2b} D_x(R_{2a}) -3 R_{2a}^3 R_{2b}  \left( 2 D_x(R_1)-R_{2a} \right).
 \end{align*}
 The weight $\lambda$ of these relative invariants satisfies
 \begin{gather*}
 \lambda(X_4)=3 w_1-2 w_2, \quad \lambda(X_5)=(3 w_1-2 w_2+w_3) z-w_3 y_1, \quad
  \lambda(X_6)=-w_1, \\ \lambda(X_7)=-w_2 x, \quad \lambda(X_8)=((3 w_1-2 w_2+w_3)z-w_3 y_1) x-(3 w_1-w_2) y
 \end{gather*}
 where $(w_1,w_2,w_3)$ are equal to  $(0,1,1)$, $(1,3,3)$, $(2,6,3)$, $(4,12,8)$, $(4,12,10)$ and $(6,18,13)$, respectively.

 \begin{theorem} \label{th:Invar27}
The field of rational differential invariants is generated by the invariants $I_{3}=\frac{R_3^3}{(R_{2a} R_{2b})^4}$,
$I_{4a}=\frac{R_{4a}^2}{R_3 R_{2a}^4}$, $I_{4b}=\frac{R_{4b}^2R_{2a}^8}{R_{4a}^5}$
and the invariant derivation $\nabla=\frac{R_1 R_{2a} R_{2b}}{R_3} D_x$.
 \end{theorem}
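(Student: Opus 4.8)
The plan is to mirror the proof of Theorem~\ref{th:InvarMinkowski}. Because every generator is rational in the jet variables, it suffices to check invariance infinitesimally, i.e.\ $X^{(k)}(I_3)=X^{(k)}(I_{4a})=X^{(k)}(I_{4b})=0$ and $[X^{(\infty)},\nabla]=0$ for all $X\in\mathfrak g$. The first of these reduces to weight bookkeeping: using the weights $(w_1,w_2,w_3)$ listed above, one checks that each candidate has total weight zero. For instance $I_3$ has weight $3(4,12,8)-4\bigl((1,3,3)+(2,6,3)\bigr)=(0,0,0)$, and the same cancellation occurs for $I_{4a}$ and $I_{4b}$; a weight-zero rational combination of relative invariants is automatically absolute. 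For the derivation I would verify that the coefficient $R_1R_{2a}R_{2b}/R_3$ carries the weight $(-1,-2,-1)$ that exactly compensates the transformation of $D_x$, which is equivalent to $[X^{(\infty)},\nabla]=0$.

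Next I would pin down the count. Here $\mathfrak g$ is $8$-dimensional and $\dim J^k=2k+3$, so once the prolonged action becomes free the generic orbit has dimension $8$ and there are $2k-5$ independent rational invariants of order $\le k$. On $J^3$ (dimension $9$) this is a single invariant, realized by $I_3$; on $J^4$ (dimension $11$) it is three, and I would confirm that $I_3,I_{4a},I_{4b}$ are functionally independent there. In particular the order-$4$ invariant $\nabla(I_3)$ must then be a function of $I_3,I_{4a},I_{4b}$, and each higher order contributes two further invariants obtained by applying $\nabla$.

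The decisive step is to exclude any algebraic extension of the field generated by this transcendence basis inside the full field of rational invariants. As in Theorem~\ref{th:InvarMinkowski}, I would show that for $k$ large the derived invariants $\nabla^{r}(I_3),\nabla^{r-1}(I_{4a}),\nabla^{r-1}(I_{4b})$ are \emph{linear} in the top jet coordinates $(y_{k+1},z_{k+1})$ on the fibers of $J^{k+1}\to J^k$. A linear (degree-one) dependence admits no nontrivial root-adjoining extension, so the claim collapses to the lowest order where linearity has not yet set in, namely order $4$; there it becomes a finite explicit computation on the $11$-dimensional space $J^4$.

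The hardest part will be this base case. The relative invariants are far more intricate than in the Minkowski setting---$R_3,R_{4a},R_{4b}$ already involve iterated total derivatives of the lower $R_i$---and the singular locus where the orbit dimension drops is correspondingly richer; indeed Table~\ref{tab:imprimitive} records that this case later splits across two separate foliations $\Pi_a\colon R_{2a}=0$ and $\Pi_b\colon R_{2b}=0$. Verifying directly that $I_3,I_{4a},I_{4b}$ generate every rational invariant of order $\le 4$, with no hidden algebraic extension, is where the genuine labor lies; the remaining ingredients are weight bookkeeping and the by-now-standard linearity argument.
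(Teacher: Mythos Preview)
Your proposal is correct and follows exactly the template the paper uses: the paper omits the proof of this theorem entirely, relying implicitly on the argument given for Theorem~\ref{th:InvarMinkowski}, which is precisely what you reproduce---weight bookkeeping for invariance, an orbit-dimension count giving $2k-5$ independent invariants in order $k\ge3$, and the linearity-on-fibers argument to exclude algebraic extensions, reducing to a finite check in order~$4$. One small clarification: since only two new invariants appear per order while you list three derived quantities $\nabla^{r}(I_3),\nabla^{r-1}(I_{4a}),\nabla^{r-1}(I_{4b})$, you should note that any two of these suffice as the independent linear functions on the fiber of $J^{k+1}\to J^k$; this is what you need for the no-extension step, and it is consistent with your earlier observation that $\nabla(I_3)$ is already a function of $I_3,I_{4a},I_{4b}$.
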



The action is transitive on $J^0$. The space $J^1$ consists of one 5-dimensional orbit.
The submanifold $\Sing^1=\{R_1=0\}$ is exactly the set of points where the orbit dimension drops.
On $J^2$ there is one open 7-dimensional orbit.
The orbits of lower dimension unite in the subset $\Sing^2=\{R_1 R_{2a} R_{2b}=0\}$.
It can easily be verified that the subset in $J^3$ where the orbit dimension drops is contained in $\Sing^3=\pi_{3,2}^{-1}(\Sing^2)$,
and it follows that orbits in $J^k$ for $k\geq 3$ are 8-dimensional outside the subset $\Sing^k=\pi_{k,2}^{-1}(\Sing^2)$.

Now let us restrict to the subset
 \[
\Sigma^k=\{R_1=0, D_x(R_1)=0, \dots, D_x^{k-1}(R_1)=0\} \subset J^k.
 \]
On $\Sigma^1$ there are two orbits, separated by the vanishing of the conditional relative invariant $Q_1= z_1$.
In $\Sigma^2$ the complement of $Q_1=0$ consists of a 5-dimensional orbit. Notice that $R_{2a}|_{\Sigma^2}=Q_1$.
In $\Sigma^3$ the complement of $\{Q_1=0\}$ consists of a 6-dimensional orbit. On $\Sigma^4$, we have a new conditional
relative invariant $Q_4=9 z_1^2 z_4-45 z_1 z_2 z_3+40 z_2^3$. The complement of $\{Q_1 Q_4=0\}$ is a 7-dimensional orbit.
In $\Sigma^5$, the complement of $\{Q_1 Q_4=0\}$ is an 8-dimensional orbit. In $\Sigma^k$ for $k \geq 6$, the complement of $\{Q_1 Q_4=0\}$
consists of 8-dimensional orbits. Using the notation
\[Q_6=2 Q_1 Q_4  \left(Q_1 D_x^2(Q_4) +D_x(Q_1) D_x(Q_4) \right)-\tfrac{7}{3} Q_1^2 D_x(Q_4)^2-\left(9 Q_1D_x^2(Q_1)-7 D_x(Q_1)^2\right) Q_4^2,\]
we get the following generators for the conditional absolute invariants:
 \begin{align*}
J_6 = \frac{Q_6^3}{Q_4^8}, \qquad \nabla_{\Sigma} = \frac{Q_1 Q_6}{Q_4^3} D_x.
 \end{align*}

Next, consider the subset
 \[
\Pi_a^k = \{R_{2a}=0, D_x(R_{2a})=0, \dots, D_x^{k-2}(R_{2a})=0\} \subset J^k.
 \]
On $\Pi_a^2$ there is an open 6-dimensional orbit. The orbit dimension drops on the subset $\{R_1 R_{2b}=0\}$.
On $\Pi_a^3$ the open orbit is 7-dimensional, and the orbit dimension drops on the subset of $\{R_1 R_{2b}=0\}$.
On $\Pi_a^4$ there is a conditional relative invariant:
 \begin{align*}
P_4 &= 3 R_{2b} z_4+4 R_1 z_3^2+6 z_2 \left(4 z_1 z_3-3 z_2^2\right).
 \end{align*}
The complement of $\{R_1 R_{2b} P_4=0\}$ is an 8-dimensional open orbit. Furthermore, each orbit on the complement of $\{R_1 R_{2b} P_4=0\}$ in $\Pi_a^k$ is 8-dimensional, for every $k\geq 4$. The following is a conditional relative invariant on $\Pi_a^5$:
\begin{align*}
P_5 &= \left(9 z_2^2 z_5-45 z_2 z_3 z_4+40 z_3^3\right) R_1^3+\left(36 z_2 z_1 (z_1 z_5-5 z_2 z_4+10 z_3^2)-90 z_3 (z_1^2 z_4+z_2^3)\right) R_1^2
\\&+\left(18 z_1^3 (2 z_1 z_5-25 z_2 z_4)+90 z_2^2 z_1 (14 z_1 z_3-9 z_2^2)\right) R_1-180 z_1^3 \left(z_1^2 z_4-4 z_1 z_2 z_3+3 z_2^3\right)
\end{align*}
 The conditional absolute invariants are generated by
 \begin{align*}
K_5 = \frac{P_5^2}{(R_1 P_4)^3}, \qquad  \nabla_a = \frac{R_{2b} P_5}{(R_1 P_4)^2} D_x.
 \end{align*}

Consider now the subset
 \[
\Pi_b^k = \{R_{2b}=0, D_x(R_{2b})=0, \dots, D_x^{k-2}(R_{2b})=0\} \subset J^k.
 \]
On $\Pi_b^2$ there is an open 6-dimensional orbit. The orbit dimension drops on the subset $\{R_1 R_{2a}=0\}$. All ODE systems in the subset given by $\{R_1 R_{2a}=0\}$ are already considered.

The set $\Pi_b^3$ is reducible and consists of two 7-dimensional irreducible components:
\[ \{R_1=0, z_1=0\} , \qquad \{R_{2a}=0, D_x(R_{2a}) =0, z_2 y_3-y_2 z_3+2 z_1 z_3-3 z_2^2=0\}. \]
The first component is defined by equations of lower order and hence was already treated. In order to avoid this subset (and its higher-order analogues), we use coordinates $x,y,y_1,z,z_1,\dots,z_k$ on $\Pi_b^k$ by replacing $y_i$ for $i \geq 2$ using the formula $y_2=(2 z_1^2+(y_1-z) z_2)/z_1$ and its derivatives.

On $\Pi_b^3\setminus\{z_1=0\}$ the open orbit is 7-dimensional, and the orbit dimension drops on the subset $\{R_1 R_{2a}=0\}$.
On $\Pi_b^4\setminus\{z_1=0\}$ there is a conditional relative invariant:
 \[
T_4= 3 z_1^2 (R_1 z_2+2 z_1^2) z_4-4 R_1 z_1^2 z_3^2-6 z_1 z_2 (R_1 z_2+6 z_1^2) z_3+9 z_2^3 (R_1 z_2+4 z_1^2).
 \]
The complement of $R_1 R_{2a} T_4=0$ is an 8-dimensional open orbit. Furthermore, each orbit on the complement of $R_1 R_{2a} T_4=0$ in $\Pi_b^k \setminus\{z_1=0\}$ for every $k\geq 4$ is 8-dimensional. If we let
 \begin{equation*}
 T_5 = 3 R_1\left(D_x(R_1)+z_1\right) z_1^2 D_x(T_4)-z_1^2\left(10 R_1 D_x^2(R_1)+3 D_x(R_1)^2-5z_1 D_x(R_1)-28z_1^2\right) T_4,
 \end{equation*}
 then the conditional absolute invariants are generated by
 \begin{align*}
 L_5=\frac{T_5^2}{(R_1 T_4)^3}, \qquad  \nabla_b = \frac{z_1^2 R_{2a} T_5}{(R_1 T_4)^{2}} D_x.
 \end{align*}

 \begin{theorem} \label{th:invarEq27}
Let $\E\subset J^k$ be a $\mathfrak g$-invariant determined ODE system as in \eqref{eq:GeneralSystem},
given by functions $F \in \mathcal{O}(J^k),G \in \mathcal{O}(J^l)$ of orders $k$ and $l\leq k$. Then,
either $F$ and $G$ can be expressed through rational absolute differential invariants or
the system takes one of the following forms:
 \begin{itemize}
\item $G=R_1$ and $F=Q_1$, $F=Q_4$ or $F$ is a function of the conditional absolute invariants which are generated by $J_6$ and  $\nabla_\Sigma$.
\item $G=R_{2a}$ and $F=R_{2b}$, $F=P_4$ or $F$ is a function of the conditional absolute invariants which are generated by $K_5$ and $\nabla_a$.
\item $G=R_{2b}$ and $F=R_{2a}$, $F=T_4$ or $F$ is a function of the conditional absolute invariants which are generated by $L_5$ and $\nabla_b$.
 \end{itemize}
 \end{theorem}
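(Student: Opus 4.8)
The plan is to follow, step by step, the four-part strategy of \S\ref{sect:algorithm}, and to assemble into a single classification the orbit computations carried out in the paragraphs preceding the statement. The starting point is Theorem~\ref{th:Invar27}: the field of rational absolute differential invariants of $\g$ is generated by $I_3$, $I_{4a}$, $I_{4b}$ together with the derivation $\nabla$. By the Rosenlicht-type separation recalled in \S\ref{rode}, these invariants separate $\g$-orbits in general position, so any invariant determined system $\E$ whose Zariski closure is not contained in the singular locus is cut out, on its dense part, by functions of these absolute invariants. This disposes of the first alternative, and it only remains to treat systems lying inside the singular locus.

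For those I would invoke the orbit count already performed, which gives $\Sing^2=\{R_1R_{2a}R_{2b}=0\}$ and $\Sing^k=\pi_{k,2}^{-1}(\Sing^2)$ for $k\geq 3$; in particular the singular locus is generated at order two. When the two defining functions have different orders $k>l$, the lower-order equation $\{G=0\}$ is itself $\g$-invariant independently of $F$, so $G$ is either a function of absolute invariants (the generic branch, since e.g. $R_3=0$ is equivalent to $I_3=0$ off $\Sing$, and likewise $R_{4a},R_{4b}$ correspond to $I_{4a},I_{4b}$) or $\{G=0\}$ is a component of $\Sing$; because the latter is generated by the three irreducible factors at order at most two, this forces $G\in\{R_1,R_{2a},R_{2b}\}$. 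For each such $G$ one restricts to the prolonged underdetermined equation $\Sigma^k$, $\Pi_a^k$ or $\Pi_b^k$ and reruns steps (1)--(2): on $\Sigma^k$ the singular orbits are cut out by the conditional relative invariants $Q_1$ and $Q_4$, while generic orbits are separated by the conditional absolute invariants generated by $J_6$ and $\nabla_\Sigma$; analogously $\Pi_a^k$ yields $R_{2b}$, $P_4$ and the pair $K_5,\nabla_a$, and $\Pi_b^k$ yields $R_{2a}$, $T_4$ and the pair $L_5,\nabla_b$. The one equal-order codimension-two determined system $\{R_{2a}=0,R_{2b}=0\}$ surfaces in both the $\Pi_a$ and $\Pi_b$ branches, which explains the parenthetical repeat in Table~\ref{tab:imprimitive}; the detailed inspection of small $J^i$ in step~(4) confirms that no further equal-order system (requiring a genuine matrix-valued cocycle) occurs.

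The main obstacle is the orbit-dimension bookkeeping together with the completeness claims. Pinning down the exact singular sets requires the elimination-theoretic computations with the radical ideals of the $r\times r$-minor determinants (carried out with Maple), and at each stage one must verify that the exhibited conditional invariants really generate all conditional invariants. The most delicate point is the reducibility of $\Pi_b^k$: its defining locus already contains the lower-order component $\{R_1=0,z_1=0\}$, so to isolate the genuinely new orbits I would pass to the adapted chart $x,y,y_1,z,z_1,\dots,z_k$ through the substitution $y_2=(2z_1^2+(y_1-z)z_2)/z_1$ and its derivatives, exactly as in the text, and only then compute $T_4$, $T_5$ and $L_5$. Finally, to guarantee that each field of conditional absolute invariants admits no algebraic extension inside the rational conditional invariants, I would use the degree-on-fibres argument from the proof of Theorem~\ref{th:InvarMinkowski}: check that the second $\nabla$-derivatives of $J_6$, $K_5$ and $L_5$ become linear on the fibres of the relevant tower $J^{i+1}\to J^i$, so that the displayed generators already exhaust the field.
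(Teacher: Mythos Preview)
Your proposal is correct and follows essentially the same route as the paper: the theorem is stated there as a summary of the orbit-by-orbit analysis carried out in \S\ref{sect:Lie27}, organized according to the four-step strategy of \S\ref{sect:algorithm}, and you have accurately reconstructed that analysis, including the reducibility issue for $\Pi_b^k$ and the passage to the adapted chart via $y_2=(2z_1^2+(y_1-z)z_2)/z_1$. The only minor addition on your side is the explicit fibre-linearity check for the conditional fields (modeled on the argument in \S\ref{sect:sp4}), which the paper leaves implicit for this Lie algebra; this is a reasonable and harmless strengthening.
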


\subsection{Foliation-preserving algebra Lie{\hskip1pt}29} \label{sect:Lie29}

For Lie algebra \eqref{L8}, which we in this section denote by $\mathfrak g$, the first relative differential invariants are
 \begin{align*}
 R_2 &= y_2, \\
R_3 &= (3 (2 z_1^2+3 z_2) y_3-9 y_2 z_3-2 z_1 y_2 (2 z_1^2+9 z_2)) e^{2z}, \\
R_{4a} &= (3 y_2 y_4-5 y_3^2-4 z_1 y_2 y_3 +4 y_2^2( z_1^2+3  z_2) ) e^{4z/3}.
 \end{align*}
 The weight $\lambda$ of these relative invariants satisfies
 \begin{gather*}
   \lambda(X_4)=-w, \quad \lambda(X_5)=- w y_1, \quad \lambda(X_6)=-w/3, \quad \lambda(X_7)=-w x, \quad \lambda(X_8)=-w x y_1,
 \end{gather*}
 where $w$ is equal to $3$, $6$ and $8$, respectively.

 \begin{theorem} \label{th:Invar29}
The field of absolute differential invariants is generated by the differential invariants $I_3= \frac{R_3}{R_2^2}$, $I_{4a} = \frac{R_{4a}^3}{R_2^8}$ and the invariant derivation $\nabla =e^{2z/3} \frac{R_{4a}}{R_2^3}  D_x$.
 \end{theorem}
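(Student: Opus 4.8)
The plan is to reproduce, for the present realization of $\mathfrak{sl}(3)$, the three-step scheme used in the proof of Theorem~\ref{th:InvarMinkowski}; the only genuinely new feature is the cubic dependence of $I_{4a}$ on the top jet variable.

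First I would check that the proposed data are invariant, which by rationality in the fibre variables can be done infinitesimally. Since $R_2,R_3,R_{4a}$ are relative invariants of weights $w=3,6,8$, the combinations $I_3=R_3/R_2^2$ and $I_{4a}=R_{4a}^3/R_2^8$ have weight $0$ under the listed cocycle, so it remains to verify $X^{(3)}(I_3)=X^{(4)}(I_{4a})=0$ for each basis field $X$ of \eqref{L8}. For the derivation, write $\nabla=g\,D_x$ with $g=e^{2z/3}R_{4a}/R_2^3$; invariance is equivalent to $X^{(\infty)}(g)=g\,D_x(\xi)$ for all $X\in\g$, where $\xi$ is the $\p_x$-coefficient of $X$. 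The purely relative part $R_{4a}/R_2^3$ has weight $-1$, and the factor $e^{2z/3}$ is exactly what repairs the mismatch coming from the $z$-dependent entries of $\lambda$ (e.g. $\lambda(X_6)=-w/3$ with $X_6=x\p_x+y\p_y+\p_z$): one checks directly, for instance, that the logarithmic derivative of $g$ equals $D_x(\xi)=1,1,2x$ along $X_4,X_6,X_7$ respectively.

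Second I would show that $I_3,I_{4a},\nabla$ form a transcendence basis of the field of rational invariants by a dimension count. The algebra $\g$ acts transitively on $J^0$ and its point stabilizers integrate to algebraic groups, so the criterion of \S\ref{rode} applies and generic orbits are separated by rational invariants; thus the number of independent invariants of order $\le k$ equals $\dim J^k-\dim(\text{generic orbit})$. A rank computation of the prolonged basis shows the generic orbit is already $8$-dimensional on $J^3$ (the action is locally free there, hence on every $J^k$ with $k\ge3$), so with $\dim J^k=2k+3$ there is exactly one invariant of order $3$ and two of each order $k\ge4$. These are supplied by $\nabla^{k-3}(I_3)$ and $\nabla^{k-4}(I_{4a})$, whose algebraic independence I would confirm order by order.

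Third, and here lies the main difficulty, I would rule out any algebraic extension inside the field of rational invariants. Because $g$ has order $4$, for $k\ge5$ the coefficient $g$ does not involve the top variables $(y_k,z_k)$, so applying $\nabla$ introduces them only through $D_x$: the order-$k$ invariants $\nabla^{k-3}(I_3)$ and $\nabla^{k-4}(I_{4a})$ are then affine-linear in $(y_k,z_k)$, and I would check that their linear parts are independent (the chain $\nabla^{k-4}(I_{4a})$ reaches $y_k$ but not $z_k$, since $I_{4a}$ is free of $z_4$, whereas $\nabla^{k-3}(I_3)$ reaches $z_k$ because $I_3$ involves $z_3$). Consequently these two invariants rationally parametrize each fibre $J^k\to J^{k-1}$ with $k\ge5$, and an inductive descent reduces everything to order $\le4$. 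The base case is the genuine obstacle: on the fibre $J^4\to J^3$ the invariant $\nabla(I_3)$ is affine-linear in $z_4$ and lets one eliminate $z_4$ rationally, but $I_{4a}=R_{4a}^3/R_2^8$ depends \emph{cubically} on $y_4$. The point to establish is that no rational invariant refines $I_{4a}$ along $y_4$, equivalently that the formal cube root $R_{4a}/R_2^{8/3}$ is not rational---it carries the non-integer power $y_2^{8/3}$ together with $e^{4z/3}$ and fails to be single-valued under the flows in $\g$. Granting this direct order-$4$ verification, every rational invariant of order $\le4$ is a rational function of $I_3,\nabla(I_3),I_{4a}$, so the differential field generated by $I_3,I_{4a},\nabla$ admits no proper algebraic extension within the rational invariants, which completes the proof.
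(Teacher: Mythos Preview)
Your proposal is correct and follows essentially the same three-step scheme as the paper's proof of Theorem~\ref{th:InvarMinkowski}: infinitesimal verification of invariance, a dimension count giving a transcendence basis, and exclusion of algebraic extensions via linearity on high-order fibres plus a direct computation at the bottom. The paper gives no separate proof for Theorem~\ref{th:Invar29}, so what you have written is precisely the adaptation one is meant to carry out.

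Two small points are worth sharpening. First, your linearity argument for $k\ge5$ is right, but note that $\nabla(I_3)=g\,D_x(I_3)$ already at order $4$ is quadratic in $y_4$ (both $g$ and $D_x(I_3)$ are linear in $y_4$); it is only linear in $z_4$, which is what you actually use. Second, your exclusion of an algebraic extension at order $4$ is a bit heuristic: showing that the particular expression $R_{4a}/R_2^{8/3}$ is irrational does not by itself rule out the existence of some \emph{other} rational invariant of lower $y_4$-degree. The clean way (and what the paper means by ``direct computation'') is to restrict to a fibre of $J^4\to J^0$ over a fixed point, compute the action of the stabiliser there, and verify that the invariant field of that algebraic group action on the fibre is exactly $\C(I_3,\nabla(I_3),I_{4a})$; equivalently, show that the generic level set $\{I_3=c_1,\ \nabla(I_3)=c_2,\ I_{4a}=c_3\}$ is a single stabiliser orbit. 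This replaces the cube-root heuristic by the Rosenlicht-type argument the paper invokes in \S\ref{rode}.
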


The Lie algebra $\mathfrak g$ is transitive on $J^1$. There is only one orbit. On $J^2$ there is one 7-dimensional orbit.
Its complement $\Sing^2=\{R_2=0\}$ consists of orbits of lower dimension. On $J^3$ generic orbits are 8-dimensional.
The subset on which the orbit dimension drops is $\pi_{3,2}^{-1}(\Sing^2)$.
In a similar way, the orbits on $J^k$ are 8-dimensional outside the set $\pi_{k,2}^{-1}(\Sing^2)$ for $k > 3$.

Thus, the invariant determined systems that are not given by absolute invariants lie inside the subset
 \[
\Sigma^k = \{ R_2=0, D_x(R_2)=0, \dots, D_x^{k-2}(R_2)=0 \} \subset J^k.
 \]
On $\Sigma^2$, there is one 6-dimensional orbit and one 5-dimensional orbit. The latter is given by the additional constraint $Q_2=3z_2+2 z_1^2$. Also for $k>2$ the generic orbits in $\Sigma^k$ are 6-dimensional in the complement of $\{Q_2=0\}$. The conditional absolute invariants are generated by
 \begin{align*}
J_3 &= \frac{(9z_3+36 z_1 z_2+16 z_1^3)^2}{Q_2^3}, \quad
\nabla_{\Sigma} = \frac{9z_3+36 z_1 z_2+16 z_1^3}{Q_2^2} D_x.
 \end{align*}

 \begin{theorem}\label{th:invarEq29}
Let $\E\subset J^k$ be a $\mathfrak g$-invariant determined ODE system as in \eqref{eq:GeneralSystem},
given by functions $F \in \mathcal{O}(J^k),G \in \mathcal{O}(J^l)$ of orders $k$ and $l\leq k$. Then,
either $F$ and $G$ can be expressed through rational absolute differential invariants or
the system takes one of the following forms:
\begin{itemize}
 \item $G=R_2$ and $F=Q_2$ or $F$ is a function of the conditional absolute invariants which are generated by $J_3$ and  $\nabla_\Sigma$.
\end{itemize}
 \end{theorem}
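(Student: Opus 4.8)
The plan is to follow the general strategy of \S\ref{sect:algorithm} and to assemble the orbit data already recorded in this subsection, exactly as was done for the Lie algebras of the preceding subsections. First I would invoke Theorem \ref{th:Invar29}: the field of rational absolute differential invariants is generated by $I_3$, $I_{4a}$ and $\nabla$, and these separate the $\mathfrak g$-orbits in general position, that is, outside the singular set $\Sing^k$. Hence any invariant determined system of type \eqref{eq:GeneralSystem} whose locus is not contained in $\Sing^\infty$ has both defining functions $F$ and $G$ expressible through these absolute invariants, which is the first alternative. It then remains to treat systems whose locus lies inside the singular set, and to show these are exhausted by the listed case $G=R_2$.

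The decisive structural feature here is that $\mathfrak g$ is transitive on $J^1$, so there is no order-$1$ relative invariant, and the orbit analysis above shows that $\Sing^2=\{R_2=0\}$ is an \emph{irreducible} hypersurface with $\Sing^k=\pi_{k,2}^{-1}(\Sing^2)$ for $k>2$. In contrast to the primitive Lie algebras of Table \ref{tab:primitive}, whose singular sets contain an isolated codimension-$2$ component such as $\{y_2=0,z_2=0\}$, the singular locus for Lie{\hskip1pt}29 carries no codimension-$2$ invariant subvariety that fails to factor through $R_2$. Consequently step (4) of the strategy yields nothing new, and any invariant determined system sitting inside $\Sing^k$ must have its lower-order defining equation equal to $R_2=0$; that is, $G=R_2$ with $l=2$, while $F$ constrains the invariant underdetermined equation $\Sigma^k=\{R_2=0,D_x(R_2)=0,\dots,D_x^{k-2}(R_2)=0\}$.

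I would then repeat steps (1)--(2) on $\Sigma^k$. The computation recorded above exhibits a single lower-dimensional orbit on $\Sigma^2$, cut out by the conditional relative invariant $Q_2=3z_2+2z_1^2$, which gives the determined system $\{R_2=0,Q_2=0\}$; off $\{Q_2=0\}$ the orbits in $\Sigma^k$ are $6$-dimensional and are separated by the conditional absolute invariants generated by $J_3$ and $\nabla_\Sigma$, so that the remaining $F$ is a function of these. This produces precisely the two sub-cases $F=Q_2$ and $F=f(J_3,\nabla_\Sigma J_3,\dots)$ of the theorem, and completes the list.

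The hard part will be the two completeness assertions concealed in this bookkeeping: that the singular locus within $\Sigma^k$ stabilizes to the hypersurface cut out by $Q_2$ (so that $Q_2$ is the only conditional relative invariant), and that the field generated by $J_3$ and $\nabla_\Sigma$ has no proper algebraic extension inside the field of rational conditional invariants. For the latter I would argue as in the proof of Theorem \ref{th:InvarMinkowski}: verify that the iterated derivatives $\nabla_\Sigma^r(J_3)$ become linear on the fibers of $J^{k+1}\to J^k$ once $k$ is large enough, so that no conditional invariant of lower fiber-degree can be adjoined, thereby reducing the claim to a direct finite computation in the lowest orders. Establishing the requisite orbit separation for the restricted $\mathfrak g$-action on $\Sigma^k$ is the genuine obstacle; everything else is routine tracking of the orbit dimensions already tabulated.
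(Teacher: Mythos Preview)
Your proposal is correct and follows essentially the same approach as the paper: the theorem is simply the collation of the orbit analysis carried out in this subsection according to the strategy of \S\ref{sect:algorithm}, and you have correctly identified the key simplifying feature---that $\Sing^2=\{R_2=0\}$ is irreducible with no stray codimension-$2$ invariant component---which is why step (4) is vacuous here, in contrast to the primitive cases. Your added remarks about verifying that $J_3,\nabla_\Sigma$ generate without algebraic extension are more explicit than the paper, which leaves this implicit for Lie{\hskip1pt}29, but the argument you sketch (linearity of $\nabla_\Sigma^r(J_3)$ on high fibers, reducing to a finite check) is exactly the pattern used earlier.
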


\section{Miscellaneous results}\label{Sect:Misc}

The discussion and results of this section are closely related to the results of \S\ref{sect:Scalar} and \S\ref{sect:Sytems}. In \S\ref{sl2} we discuss some subtleties related to our computations that one should be aware of when interpreting the results. In \S\ref{sect:twistor} we point out a correspondence between the two $\mathfrak{sp}(4)$ realizations on $\mathbb C^3$ that can be understood either through twistor correspondence or jet-prolongation. \S\ref{sect:nonfree} focuses on a particular underdetermined PDE from \S\ref{sect:sp4} where the $\mathfrak{sp}(4)$ action never becomes free. Lastly, in \S\ref{sect:moduli} we sketch an argument explaining that the space of ODE systems with essentially point (or essentially contact) symmetry algebras is small compared to the space of all ODE systems admitting infinitesimal symmetries.

\subsection{Variation on $\mathfrak{sl}(2)$ realizations}\label{sl2}
In this section we show by examples how different realizations of a given abstract Lie algebra can have significantly different invariant differential equations. Consider the following five realizations of $\mathfrak{sl}(2)$:
\begin{align*}
 \mathfrak g_1 &= \langle y \partial_x, x \partial_x - y\partial_y, x \partial_y \rangle,\\
 \mathfrak g_2 &= \langle \partial_x, x \partial_x-y\partial_y, x^2 \partial_x-2xy\partial_y \rangle, \\
 \mathfrak g_3 &= \langle \partial_x, x \partial_x-\partial_y,x^2\partial_x-2x\partial_y \rangle, \\
 \mathfrak g_4 &= \langle \partial_x, x\partial_x-y\partial_y, x^2 \partial_x-(2xy+1)\partial_y \rangle, \\
 \mathfrak g_5 &= \langle \partial_x + \partial_y, x\partial_x+y\partial_y,x^2\partial_x+y^2\partial_y, \rangle .
\end{align*}

The first three Lie algebras of vector fields are locally equivalent. 
The Lie algebra $\g_1$ is the standard linear $\mathfrak{sl}(2)$-representation.
It is transitive on the complement of the 0-dimensional orbit $\{(0,0)\}$ in $J^0$
and preserves the one 1-dimensional distribution $\langle x\p_x+y\p_y\rangle$.
The Lie algebra $\g_2$ has one 1-dimensional orbit, given by $y=0$, and one 2-dimensional orbit on $J^0$.
The Lie algebra $\g_3$ is transitive. Both $\g_2$ and $\g_3$ preserve one 1-dimensional distribution, given by $\langle\p_y\rangle$.

The Lie algebras $\g_4$ and $\g_5$ are also locally equivalent to each other, and not locally equivalent to the previous Lie algebras.
The first one is transitive, and the second has a singular orbit $y=x$ and is transitive in the complement.
They have two invariant 1-dimensional distributions: $\g_4$ preserves the line distributions $\langle\p_y\rangle$, $\langle\p_x+y^2\p_y\rangle$,
while $\g_5$ preserves $\langle\p_y\rangle$, $\langle\p_x\rangle$.

Table \ref{tab:sl2} gives generators for the field of rational differential invariants for each of these five Lie algebras, expressed in terms of relative differential invariants. It also shows the Lie determinant, up to a constant factor. It clearly illustrates some subtleties regarding the existence of first-order relative invariants, and the algebraic type of the absolute invariants.

\begin{quotation}
{\bf Observation:} Several properties, such as the number of first-order relative differential invariants, the algebraic type (rational/polynomial) of absolute differential invariants, and the order of the basic invariant derivations, are sensitive to the choice of global realization as a Lie algebra of vector fields, meaning that they may differ for locally equivalent realizations. Furthermore, the number of first-order invariant ODEs is reliant on the way we split the coordinates in $J^0$ into ``dependent'' and ``independent'' ones.
\end{quotation}

\begin{table}[h]
\begin{center}
\begin{tabular}{|l|l|l|l|}
\hline
\rule[-1ex]{0pt}{2.5ex} & Relative invariants & Lie determinant & Absolute invariants \\
\hline
\rule[-3ex]{0pt}{6.5ex} $\mathfrak g_1$ & \begin{tabular}{l} $R_1= x y_1-y$ \\ $R_2=y_2$ \end{tabular} & $R_1^2$ &  $\mathcal A_1 = \left\langle\frac{R_2}{R_1^3}, \frac{1}{R_1} D_x \right\rangle$ \\
\hline
\rule[-3ex]{0pt}{7.5ex}  $\mathfrak g_2$ & \begin{tabular}{l} $R_0= y$ \\ $R_2=2yy_2-3y_1^2$ \end{tabular}&  $R_0^2$ & $\mathcal A_2 = \left\langle\frac{R_2}{R_0^4}, \frac{1}{R_0} D_x \right\rangle$ \\
\hline
\rule[-1ex]{0pt}{4.5ex}  $\mathfrak g_3$ & & $ 1$ & $\mathcal A_3 = \left\langle(2 y_2-y_1^2) e^{-2y},  e^{-y} D_x\right\rangle$ \\
\hline
\rule[-3ex]{0pt}{7.5ex}  $\mathfrak g_4$ & \begin{tabular}{l} $R_1= y_1 - y^2$ \\ $R_2=y_2-6y y_1+4y^3$ \end{tabular} & $R_1$ & $\mathcal A_4 = \left\langle\frac{ R_2^2}{R_1^3}, \frac{ R_2}{R_1^2} D_x\right\rangle$  \\
\hline
\rule[-3ex]{0pt}{7.5ex}  $\mathfrak g_5$ & \begin{tabular}{l} $R_0=y-x$ \qquad \qquad  $R_1= y_1$ \\ $R_2=(y-x)y_2-2y_1(y_1+1)$ \end{tabular} & $R_0^2 R_1$ &  $\mathcal A_5 = \left\langle\frac{R_2^2}{R_1^3}, \frac{R_0 R_2}{R_1^2} D_x\right\rangle$  \\
\hline
\end{tabular}
\end{center}

\caption{\vphantom{$\dfrac{A}{A}$}%
Examples showing how the absolute and relative differential invariants can depend on the specific realization of a Lie algebra.}
\label{tab:sl2}
\end{table}


The first-order relative invariant for Lie algebra $\g_1$ corresponds to the invariant 1-dimensional distribution:
the ODE $xy_1=y$ has solutions $y=C x$, which are integral manifolds away from the singularity $(0,0)$.
For the Lie algebras $\g_2$ and $\mathfrak g_3$, the integral manifolds of the invariant distributions are given by $x=C$.
These are not described by a function $y=y(x)$, hence they are not solutions of first-order ODEs in normal form.
This is an example of how a first-order relative invariant can be removed by a point transformation, as we discussed at the end of \S\ref{sect:Lie16}.
The singular $\g_2$-orbit $y=0$ is given by the vanishing of the Lie determinant, while in the case of $\g_3$ this is moved to infinity.
The singular $\g_1$-orbit has codimension 2, and is therefore not given by a scalar condition. Actually, it is the singularity of the invariant distribution.

For Lie algebras $\g_4$, $\g_5$ we have already rectified one invariant distribution to be vertical, so it is invisible through a relative invariant.
The other distribution is specified by vanishing of the Lie determinant: $y_1=y^2$ or $y_1=0$ respectively.
The singular orbit is moved to infinity for $\g_4$ and is given by an order 0 factor of the Lie determinant in the case of $\g_5$.

\subsection{Prolongation-projection via twistor correspondence} \label{sect:twistor}

The primitive $\mathfrak{sl}(2)\ltimes\C^3$-reali\-zation preserving the Minkowski metric, which we studied in \S\ref{sect:Minkowski},
can be extended to an $\mathfrak{sp}(4)$-realization by adding the remaining symmetries of the invariant differential equation $y_1^2+z_1^2-1=0$.
In addition to \eqref{L3}, the four remaining generators of this $\mathfrak{sp}(4)$-realization are
 \begin{gather}
x \partial_x+y\partial_y+z\partial_z, \qquad (x^2+y^2+z^2)\partial_x+2xy \partial_y+2xz\partial_z, \label{L3+}\\
2xy \partial_x+(x^2+y^2-z^2)\partial_y+2yz \partial_z, \qquad  2xz\partial_x+2yz\partial_y+(x^2-y^2+z^2) \partial_z.\label{L3++}
 \end{gather}
This $\mathfrak{sp}(4)$-realization is different from the one studied in \S\ref{sect:sp4}.
Indeed, this one is irreducible, while that of \eqref{L5} preserves the contact structure $dx-z dy+y dz=0$ (see Figure \ref{fig:Primitive}).

\begin{prop}
Two non-equivalent realizations of $C_2=\mathfrak{sp}(4)$ in $\mathbb C^3$, corresponding to the flag manifolds $C_2/P_1$ and $C_2/P_2$, are equivalent after a lift of the action to $\mathbb C^4$. This lift can be interpreted both as twistor correspondence and jet-prolongation.
\end{prop}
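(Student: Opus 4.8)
The plan is to realize both $\mathbb C^3$-actions as the two ends of a single double fibration and to identify the common four-dimensional total space with a jet-prolongation. Recall that $C_2=\mathfrak{sp}(4)$ has exactly two maximal parabolics $P_1,P_2$, with $C_2/P_1\cong\mathbb P^3$ the space of lines in the symplectic $\mathbb C^4$ and $C_2/P_2\cong Q_3$ the Lagrangian Grassmannian, both three-dimensional, while the full flag variety $C_2/B$ with $B=P_1\cap P_2$ is four-dimensional and fibers over each of them with $\mathbb P^1$-fibers. First I would match each realization to a parabolic quotient using the invariant structures recorded in Figure \ref{fig:Primitive}: the realization \eqref{L5} preserves the contact distribution $dx-z\,dy+y\,dz=0$ and is the projectivization of the standard symplectic $\mathbb C^4(t,x,y,z)$, so it is the action on the contact space $C_2/P_1$; whereas the realization generated by \eqref{L3}, \eqref{L3+} and \eqref{L3++} preserves a conformal metric and no distribution, so under $\mathfrak{sp}(4)\cong\mathfrak{so}(5)$ it is the conformal action on $C_2/P_2$.

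Next I would build $C_2/B$ as a prolongation of the conformal model. Prolonging the $\mathfrak g$-action on $\mathbb C^3(x,y,z)=J^0(\mathbb C,\mathbb C^2)$ to $J^1$ and restricting to the four-dimensional null-cone bundle $\{R_1=0\}$, $R_1=y_1^2+z_1^2-1$, gives the incidence variety of a point together with a null direction; the projection $\pi_{1,0}\colon\{R_1=0\}\to\mathbb C^3$ is one twistor fibration onto the conformal model. The opposite fibration sends a pair (point, null direction) to the null geodesic it spans. By Theorem \ref{th:invarEqMinkowski} the null geodesics are precisely the solutions of the invariant system $\{R_1=0,\,Q_2=0\}$ (equivalently $y_2=z_2=0$ on $\{R_1=0\}$, since on this locus $D_x(R_1)=0$ and $Q_2=0$ have only the trivial solution), so the space of null geodesics is the quotient of $\{R_1=0\}$ by the one-dimensional geodesic flow, a three-dimensional manifold carrying the $\mathfrak g$-invariant contact structure dual to that flow. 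I would then verify that this quotient with its induced action is contact-equivalent to $J^1(\mathbb C,\mathbb C)$ equipped with \eqref{L5}. The same four-dimensional space admits the dual description $J^2(\mathbb C,\mathbb C)\cong\mathcal E=\{u_3=0\}\subset J^3(\mathbb C,\mathbb C)$: here $\pi_{2,1}$ recovers \eqref{L5} on $J^1$, while the quotient by the solution foliation $u=a+bt+ct^2$ recovers the conformal $C_2/P_2$-action, so the two jet-prolongation pictures are two incarnations of the single flag variety $C_2/B$.

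With both realizations lifted to the same $C_2/B$, the proposition follows: the two lifted actions are intertwined by the $\mathfrak g$-equivariant identification of the two prolongation descriptions, and the diagram $C_2/P_1\leftarrow C_2/B\rightarrow C_2/P_2$ is exactly the twistor (incidence) correspondence. I expect the main obstacle to be this identification in closed form --- parametrizing the space of null geodesics by integrating the geodesic flow, choosing coordinates on $C_2/B$ adapted to $B$, and exhibiting the explicit change of variables on $\mathbb C^4$ that conjugates the prolonged $\mathfrak g$-action into the $J^2$-prolongation of \eqref{L5}, so that the induced contact structure matches the non-standard form $dx-z\,dy+y\,dz$ exactly. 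The remaining points (the dimension counts, invariance of $\{R_1=0\}$, and equivariance of the two projections) are routine.
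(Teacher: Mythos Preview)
Your proposal is correct and lands on the same underlying picture as the paper: the full flag variety $C_2/B\simeq C_2/P_{1,2}$ as the four-dimensional intermediary, with the two $\mathbb{P}^1$-fibrations onto $C_2/P_1$ and $C_2/P_2$ realizing the twistor correspondence. The identification of the two realizations with the parabolic quotients via their invariant structures is exactly right.

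Where you and the paper differ is in the direction of attack and the level of explicitness. The paper works from the contact side: it passes to Darboux coordinates on $J^1(\C)$, prolongs \eqref{L5} to $J^2(\C)$, and then reads off the two fibrations of $J^2\simeq C_2/P_{1,2}$ from the invariant splitting $\Delta=\langle\D_x^{\E}\rangle\oplus\langle\partial_{y_2}\rangle$ of the rank~2 distribution on $\E=\{y_3=0\}$. Projection along $\partial_{y_2}$ is $\pi_{2,1}$, while projection along $\D_x^{\E}$ is carried out by writing down the first integrals $I_0=y-xy_1+\tfrac12x^2y_2$, $I_1=y_1-xy_2$, $I_2=y_2$ explicitly; the pushed-forward algebra is then recognized as the conformal Killing algebra of $dy^2-2\,dx\,dz$. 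So the ``main obstacle'' you anticipate---the explicit change of variables on $\C^4$---is in fact a two-line computation once one has the three first integrals of the solution foliation of $y_3=0$.

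Your construction from the conformal side, lifting to the null-direction bundle $\{R_1=0\}\subset J^1(\C,\C^2)$ and quotienting by the null-geodesic flow, is a genuine alternative route to the same $C_2/B$ (and your check that $D_x(R_1)=0$ together with $Q_2=0$ forces $y_2=z_2=0$ on $\{R_1=0\}$ is correct). The paper alludes to this direction only briefly as the lift ``to the bundle of scales for the conformal structure.'' Your version is arguably the more canonical twistor description, but the paper's $J^2$-route has the advantage that both projections are given by elementary closed formulas without integrating any flow.
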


Twistor correspondences generalizing Penrose's ideas were introduced to parabolic geometries by A. \v{C}ap in \cite{Cap}, but the relation to jet formalism discussed here is novel.

\begin{proof}
Changing the contact structure to its Darboux normal form $dy-y_1dx$, in a new space $J^1(\C)=\C^3(x,y,y_1)$, results
in an equivalent realization $\g$ of \eqref{L5}. Prolonging this Lie algebra of vector fields to $J^2(\C)=\C^4(x,y,y_1,y_2)$
we get the contact symmetry algebra of the trivial ODE $\E=\{y_3=0\}\subset J^3$. In fact, this prolonged Lie algebra $\hat\g$ preserves the
following splitting of the rank 2 distribution on $\E\simeq J^2$: $\Delta=\langle\D_x^\E=\p_x+y_1\p_y+y_2\p_{y_1}\rangle\oplus\langle\p_{y_2}\rangle$.

Projection along $\p_{y_2}$ sends us back to $J^1$, while the projection along $\D_x^\E$ transforms this $\hat\g$ into a primitive Lie algebra of
vector fields in 3D. Indeed, the first integrals are $I_0=y_0-xy_1+\tfrac12x^2y_2$, $I_1=y_1-xy_2$, $I_2=y_2$, so passing from
$(x,y,y_1,y_2)$ to new coordinates $(x,y,z,q):=(I_0,I_1,I_2,x)$ we get a Lie algebra projectable along $\p_q$. The resulting Lie algebra $\tilde\g$
on $M^3=\C^3(x,y,z)$ is the Lie algebra of conformal Killing vectors for the Lorentzian metric $g=dy^2-2dxdz$, and changing coordinates to have the
normal form $dx^2-dy^2-dz^2$ we get the equivalent Lie algebra of vector fields given by \eqref{L3}-\eqref{L3+}-\eqref{L3++}.
Conversely $\tilde\g$ lifts to the bundle of scales for the conformal structure $[g]$ to give $\hat\g$.

It turns out that we can identify (after completion) $J^1$ with $C_2/P_1$, $J^2$ with $C_2/P_{1,2}$ and $M^3$ with $C_2/P_2$,
where $C_2=Sp(4)$ and $P_\sigma$ are parabolic subgroups marked by crosses on the Dynkin diagrams below.
The three realizations of $C_2$ on generalized flag manifolds $C_2/P_\sigma$ as discussed above,
are conveniently related by the following double fibration:
 \begin{figure}[h]
 \begin{center}
 \begin{tikzpicture}
 \draw (0,0.05) -- (1,0.05);
 \draw (0,-0.05) -- (1,-0.05);
 \draw (0.6,0.15) -- (0.4,0) -- (0.6,-0.15);
 \node[draw,circle,inner sep=2pt,fill=white] at (0,0) {};
 \node[draw,circle,inner sep=2pt,fill=white] at (1,0) {};
 \node[below] at (0,-0.1) {$\times$};
 \draw (3,0.05) -- (4,0.05);
 \draw (3,-0.05) -- (4,-0.05);
 \draw (3.6,0.15) -- (3.4,0) -- (3.6,-0.15);
 \node[draw,circle,inner sep=2pt,fill=white] at (3,0) {};
 \node[draw,circle,inner sep=2pt,fill=white] at (4,0) {};
 \node[below] at (4,-0.1) {$\times$};
 \draw (1.5,1.05) -- (2.5,1.05);
 \draw (1.5,0.95) -- (2.5,0.95);
 \draw (2.1,1.15) -- (1.9,1) -- (2.1,0.85);
 \node[draw,circle,inner sep=2pt,fill=white] at (1.5,1) {};
 \node[draw,circle,inner sep=2pt,fill=white] at (2.5,1) {};
 \node[below] at (1.5,0.9) {$\times$};
 \node[below] at (2.5,0.9) {$\times$};
\path[->,>=angle 90](1.9,0.75) edge (0.65,0.25);
\path[->,>=angle 90](2.1,0.75) edge (3.35,0.25);
 \end{tikzpicture}
\end{center}
\end{figure}
The arrows are projections corresponding to the inclusions $P_1\hookleftarrow P_{12}\hookrightarrow P_2$.
This twistor correspondence represents the above jet-picture with prolongations and lifts.
\end{proof}

\subsection{An underdetermined ODE with 
non-free $\mathfrak{sp}(4)$-symmetry} \label{sect:nonfree}

In \S\ref{sect:sp4} we saw that the Lie algebra of vector fields \eqref{L5} does not become free on the underdetermined ODE given by $R_3= z_2 y_3 - y_2 z_3=0$. More precisely, we have the following statement.
\begin{theorem}
Consider the action of the Lie algebra $\g=\mathfrak{sp}(4)$ of vector fields \eqref{L5}  prolonged to $J^k(\mathbb C,\mathbb C^2)$. All orbits contained in the invariant subset
\[\Pi^k = \{R_3 = 0, D_x(R_3)=0, \dots , D_x^{k-3}(R_3)=0\} \subset J^k(\mathbb C,\mathbb C^2)\]
have dimension less than or equal to 9 for every $k \geq 3$.
\end{theorem}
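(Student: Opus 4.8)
The plan is to work not with the coordinate formulas \eqref{L5} but with the linear model behind them. As recorded after \eqref{L5} and exploited in \S\ref{sect:twistor}, the realization $\g$ is the projectivization of the linear $Sp(4,\C)$-action on $(\C^4,\omega)$ with $\omega=dt\wedge dx+dy\wedge dz$; thus $J^0(\C,\C^2)=\C^3$ is (an affine chart of) $\mathbb P(\C^4)=Sp(4)/P_1$, a point being a line in $\C^4$, and the prolonged $\g$-action is induced by $Sp(4,\C)$. By the geometric description recorded after Theorem \ref{tm24}, the equation $R_3=0$ together with its total derivatives — precisely the equations cutting out $\Pi^k$ — says that the vectors $(y_i,z_i)$, $i\ge 2$, are all proportional, i.e. that the curve $\gamma\colon x\mapsto(x,y(x),z(x))$ lies in a plane to order $k$. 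Hence a generic point $\theta\in\Pi^k$ is the $k$-jet at some $p\in\gamma$ of a planar curve, whose image in $\mathbb P(\C^4)$ lies in a projective plane $\mathbb P(W)$ for a $3$-dimensional subspace $W\subset\C^4$ determined by $\theta$ (as a check, $\dim\Pi^k=(2k+3)-(k-2)=k+5$ matches the number of free jet-parameters of such a curve). Because the orbit-dimension function is lower semicontinuous on $\Pi^k$ and so attains its maximum on a dense subset, it suffices to produce, at each such generic $\theta$, a nonzero $X\in\g$ with $X^{(k)}|_\theta=0$: this forces $\dim\g_\theta\ge 1$, hence orbit dimension $\le\dim\mathfrak{sp}(4)-1=9$.

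\textbf{The stabilizing direction.} Since $\dim W=3$ is odd, $\omega|_W$ is degenerate; as every line is isotropic one has $W^\perp\subseteq W$, so the radical $W\cap W^\perp=W^\perp=\langle v\rangle$ is a line. I would use the symplectic transvection
\[
T_t\colon u\longmapsto u+t\,\omega(v,u)\,v,\qquad t\in\C,
\]
which lies in $Sp(4,\C)$ (a direct check using $\omega(v,v)=0$). For every $w\in W$ we have $\omega(v,w)=0$, whence $T_t w=w$; thus $T_t$ fixes $W$ vectorwise and therefore fixes $\mathbb P(W)$ pointwise. As $\gamma$ lies in $\mathbb P(W)$, the transformation $T_t$ restricts to the identity on $\gamma$, so $T_t(p)=p$ and $T_t\circ\gamma=\gamma$; consequently the prolongation $T_t^{(k)}$ of the corresponding point transformation of $J^0$ fixes $\theta=j^k_p\gamma$ for all $t$. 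Differentiating at $t=0$ yields $X:=\tfrac{d}{dt}\big|_{0}T_t\in\mathfrak{sp}(4)=\langle\,\eqref{L5}\,\rangle$ with $X^{(k)}|_\theta=0$. The element $X$ is nonzero, since $T_t$ moves any $[u]$ with $\omega(v,u)\neq0$ and $\mathfrak{sp}(4)$ acts faithfully on $\mathbb P(\C^4)$.

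\textbf{Conclusion and main obstacle.} Combining the two steps, at a generic $\theta\in\Pi^k$ the isotropy subalgebra $\g_\theta$ contains the nonzero transvection generator $X$, so the generic — hence maximal — orbit dimension on $\Pi^k$ is $\le 9$; by lower semicontinuity \emph{every} orbit contained in $\Pi^k$ has dimension $\le 9$, for all $k\ge 3$. I expect the only delicate point to be the first paragraph: making rigorous that the generic stratum of $\Pi^k$ really consists of $k$-jets of honest planar curves, so that the projective model applies, and keeping track of the fact that the stabilizing direction genuinely varies, $X=X(\theta)$ through $W=W(\theta)$. This variation is exactly why no single $X\in\g$ can vanish on all of $\Pi^k$ while the isotropy is nonetheless everywhere nontrivial — the ``moving'' stabilizer responsible for the failure of freeness. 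The reducibility of $\Pi^k$ noted in \S\ref{sect:sp4} (e.g. the component $\{y_2=0,z_2=0\}$ of straight lines) causes no trouble, since lines are planar as well and the same transvection argument applies on each component.
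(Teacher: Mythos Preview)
Your proof is correct and follows essentially the same route as the paper's: both interpret $R_3=0$ as the condition that the curve in $\mathbb P^3=Sp(4)/P_1$ is planar, and both exhibit a nonzero isotropy element at each (generic) point of $\Pi^k$ coming from the symplectic annihilator of the $3$-plane $W$. Your symplectic transvection $T_t(u)=u+t\,\omega(v,u)v$ with $\langle v\rangle=W^\perp$ is exactly the $1$-dimensional piece $\g_2$ in the $|2|$-grading of $\mathfrak{sp}(4)$ associated to the line $\ell=\langle v\rangle$ that the paper invokes; the paper additionally writes out the resulting vector field $U$ explicitly in the basis $X_1,\dots,X_{10}$ and observes it depends only on the $2$-jet $\theta_2$, which in your language is the statement that $W=W(\theta_2)$. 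Your explicit appeal to lower semicontinuity to pass from generic to all points is a small clarification the paper leaves implicit.
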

\begin{proof}
The stabilizer of a point $\theta \in \Pi^k$ for $k \geq 4$ is 1-dimensional. This 1-dimensional Lie subalgebra depends on $\theta_2=\pi_{k,2}(\theta)$ and is spanned by
 \[
U=\alpha (\alpha X_1+y_2 X_2+z_2 X_3+\beta X_7)+z_2^2 X_4-y_2^2 X_5+y_2 z_2 X_6+\beta (z_2 X_8-y_2 X_9+\beta X_{10}),
 \]
where $\alpha$ and $\beta$ are given by
 \[
\alpha = x (y_1 z_2- z_1 y_2)+z y_2 -y z_2, \qquad \beta = z_1 y_2-y_1 z_2,
 \]
and $X_1,\dots, X_{10}$ are the basis elements as expressed in \eqref{L5}, in the order they appear there.

Let us give a non-computational explanation of this fact. The equation $R_3=0$ describes plane curves
$\gamma\subset\mathbb{P}^2\subset\mathbb{P}^3$ (above we used an affine chart
$\C^3(x,y,z)\subset\mathbb{P}^3$, with a plane $\mathbb{P}^2=\{ax+by+cz=d\}$
and $y=y(x)$, $z=z(x)$),
where $\mathbb{P}^3=Sp(4)/P_1$ is the homogeneous representation with the stabilizer of a plane
being the first parabolic subgroup. The action of $Sp(4)$ is the projectivization of the standard linear action
on the symplectic $\C^4$, and $P_1$ is the stabilizer of a line $\ell$ (for which the plane $\mathbb{P}^2$ is
the projectivization of the skew-orthogonal complement $\ell^\perp$).
Note that this gives $\mathfrak{p}_1=\op{Lie}(P_1)$ a filtration, with the corresponding grading of
$\g=\mathfrak{sp}(4)$ as follows:
 $$
\g=\g_{-2}\oplus\g_{-1}\oplus\underbrace{\g_0\oplus\g_1\oplus\g_2}_{\mathfrak{p}_1}.
 $$
The reductive part $\g_0=\mathfrak{gl}(2)$ acts on $\ell^\perp/\ell\simeq\C^2$ in the standard way,
and the nilradical $\g_1\oplus\g_2=\mathfrak{heis}(3)=\C^2\ltimes\C$ maps $\ell^\perp\to\ell$
so that $\g_1=(\ell^\perp/\ell)^*\simeq\C^2$ and $\g_2$ acts trivially on $\ell^\perp$.
This 1-dimensional space is generated by the vector $U$ above, which depends only on $\theta_2\equiv\ell$.
\end{proof}

Since the set $\Pi^k$ is meager in $J^k(\mathbb C,\mathbb C^2)$, these computations are consistent with Theorem 7.1 in \cite[Th. 7.1]{AO} which states that the prolonged action becomes free on a comeager subset of $J^k(\mathbb C,\mathbb C^2)$ for sufficiently large $k$. However, this example shows that the computation of conditional  absolute invariants on $\Pi^k$ will involve a Lie algebra of vector fields that does not become free after prolongation. One  consequence of this is that the method of moving frames (see \cite{FO}) can not be applied directly for finding conditional differential invariants in this case.

Notice that the dimension of $\Pi^k$ grows without bound with $k$, making this example significantly different from Example 3 of \cite{AO}, where the lack of freeness is a simple observation. More generally, the action of the Lie algebra $\mathfrak{so}(n)\ltimes\C^n$ of the motion group, or a larger Lie algebra like $\mathfrak{aff}(n)$ or $\mathfrak{sl}(n+1)$, on the space
of (unparametrized) straight lines in $\C^n$, given by the equation $\{y^j_2=0:1\leq j\leq n-1\}\subset J^2(\C,\C^{n-1})$
is non-free for $n\geq 2$ by dimensional reasons. These reasons are absent in the above example, where the prolonged
underdetermined equation $\Pi^\infty$ is infinite-dimensional.

We can also get a determined equation
by imposing some $\mathfrak{sp}(4)$ invariant constraint from Theorem \ref{tm24}: Take for example the determined ODE system defined by $R_3=0$ and $\nabla_\Pi^i(K_6)=\Psi(K_5,K_6,\nabla_\Pi(K_6),\dots,\nabla_\Pi^{i-1}(K_6))$. To conclude:
\begin{cor}
There exist $\g$-invariant (determined) ODE systems (depending on numeric and functional parameters) with arbitrary large but finite dimension, on which the action is not free.
\end{cor}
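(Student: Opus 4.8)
The plan is to formalise the construction sketched just before the statement and then read off non-freeness directly from the preceding Theorem. The starting object is the invariant underdetermined equation $R_3=0$, whose total prolongation is $\Pi^k$; by Theorem \ref{tm24} the conditional absolute differential invariants on $\Pi^k$ are generated by $K_5$, $K_6$ and the conditional invariant derivation $\nabla_\Pi$. Fixing an integer $i\geq1$ and an analytic (or rational) function $\Psi$ of $i+1$ arguments, I would adjoin to $R_3=0$ the single invariant relation
\[
\nabla_\Pi^{\,i}(K_6)=\Psi\bigl(K_5,K_6,\nabla_\Pi(K_6),\dots,\nabla_\Pi^{\,i-1}(K_6)\bigr).
\]
Since $K_6$ has order $6$ and $\nabla_\Pi$ raises the jet-order by one, this relation has order $6+i$; together with the order-$3$ equation $R_3=0$ it defines a system of type \eqref{eq:GeneralSystem} of orders $(6+i,3)$.

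Next I would check that this pair is genuinely \emph{determined} and \emph{finite-dimensional}. On the chart of $\Pi^k$ with $y_2\neq0$ used in \S\ref{sect:sp4} (where $z_{\geq3}$ are expressed through the $y_j$), the invariant $\nabla_\Pi^{\,i}(K_6)$ depends nontrivially on $y_{6+i}$, so the added relation can be solved for the top derivative and the nondegeneracy determinant in \eqref{eq:GeneralSystem} is nonzero on a Zariski-open set. Hence $\E=\{\nabla_\Pi^{\,i}(K_6)-\Psi=0\}\cap\Pi^{6+i}$ is determined, and by the dimension formula for \eqref{eq:GeneralSystem} it is a submanifold of dimension $(6+i)+3+1=10+i$. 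Letting $i$ grow produces determined systems of arbitrarily large finite dimension; the function $\Psi$ is the functional parameter, and constants inside $\Psi$ supply the numeric parameters.

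Non-freeness is then immediate, and is the conceptual point of the corollary rather than the construction itself. As $\E\subset\Pi^{6+i}$ is $\g$-invariant, it is a union of $\g$-orbits all of which lie in $\Pi^{6+i}$; by the preceding Theorem each of these has dimension at most $9<10=\dim\g$, so the stabilizer of every point of $\E$ is at least one-dimensional---indeed it is spanned by the explicit field $U$ exhibited there, depending only on $\pi_{6+i,2}(\theta)$. Therefore the prolonged $\g$-action restricted to $\E$ is nowhere free, and the corollary follows.

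The hard part is the nondegeneracy/transversality bookkeeping: one must confirm that adjoining the relation lowers the dimension of the orbit space by exactly one and leaves the generic orbits $9$-dimensional, rather than pushing $\E$ into a deeper singular stratum of $\Pi^k$. This is controlled by the fact, established in \S\ref{sect:sp4}, that on the open stratum of $\Pi^k$ where $y_2P_4\neq0$ the quantities $K_5,K_6,\nabla_\Pi(K_6),\dots,\nabla_\Pi^{\,i}(K_6)$ are functionally independent and separate the generic $9$-dimensional orbits; a generic choice of $\Psi$ then keeps $\E$ inside this stratum, so that $\E$ indeed has the claimed dimension while remaining non-free.
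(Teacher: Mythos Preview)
Your proposal is correct and follows exactly the approach the paper takes: it adjoins to $R_3=0$ the invariant relation $\nabla_\Pi^{\,i}(K_6)=\Psi(K_5,K_6,\dots,\nabla_\Pi^{\,i-1}(K_6))$ from Theorem~\ref{tm24} and reads off non-freeness from the preceding Theorem bounding orbit dimensions on $\Pi^k$ by $9$. In fact you supply more detail than the paper (the dimension count $10+i$, the nondegeneracy check on the $y_2\neq0$ chart, and the transversality caveat) where the paper simply states the construction and the corollary.
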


We point out that this phenomenon is observed multiple times in our computations. For the Lie algebra action \eqref{L6}, which we treated in \S\ref{sect:Lie16}, the orbits on the underdetermined ODE $\Pi^k \subset J^k(\mathbb C,\mathbb C^2)$ have dimension less than or equal to 5 for every $k$ even though the Lie algebra is 6-dimensional. For the 8-dimensional Lie algebra \eqref{L8}, treated in \S\ref{sect:Lie29}, the orbits on $\Sigma^k \subset J^k(\mathbb C, \mathbb C^2)$ have dimension $\leq 6$ for every $k$.

\subsection{Moduli of ODEs with infinitesimal symmetries}\label{sect:moduli}

This section is somewhat speculative, so we will be slightly vague and only sketch the ideas here.
They can be rigorously justified, but we omit it for the sake of brevity.
An illustrating example is given at the end of the section.

Consider the set of all $k$th-order (determined) algebraic ODE systems in $m$ independent variables.
This means they can be defined by $m$ independent equations $F_1=0, \dots, F_m=0$, where $F_i$ are polynomials of some degree $\leq p$ on fibers $J^k\to J^j$.
Here $j=0$ or $j=1$ depending on whether one considers point or contact symmetries. This space of ODE systems
is parametrized by a fixed number of functions $f_i$ in  $\mathcal{O}(J^0)$ or $\mathcal{O}(J^1)$, respectively.
(The number of functions can be computed from the numbers $m$, $k$ and $p$.)

We consider the systems up to equivalence under the action of a pseudogroup $\mathcal{G}$, where $\mathcal{G}$ is either the contact pseudogroup
$\op{Cont}_{\text{loc}}(J^1)$ in the scalar ($m=1$) higher order ($n>2$) case or the point pseudogroup $\op{Diff}_{\text{loc}}(J^0)$ in general.
We fix $m$ in what follows.

For fixed $k,p$ let $\mathcal{M}^k_p$ denote the subset of all such ODE systems having at least one non-trivial infinitesimal symmetry.
Every $\mathcal{M}^k_p$ can be considered as the solution space of a PDE system on functions $f_i$. Let $\mathcal{M}=\cup_{k,p}\mathcal{M}^k_p$.
The moduli space $\mathcal{M}/\mathcal{G}$ of invariant ODE systems can be understood as the solution space of
the quotient equation of this (possibly disconnected) PDE system.
The set $\mathcal{M}/\mathcal{G}$ has complicated non-Hausdorff topology. Nevertheless, consider now the subset $\mathcal{M}'\subset\mathcal{M}$
of ODE systems with essentially contact or essentially point symmetries and the corresponding quotient $\mathcal{M}'/\mathcal{G}$.

 \begin{theorem}
The subset $\mathcal{M}'\subset\mathcal{M}$ is meager, and moreover $\mathcal{M}'/\mathcal{G}\subset\mathcal{M}/\mathcal{G}$ is non-separable.
 \end{theorem}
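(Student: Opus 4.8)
The plan is to establish the two assertions by independent arguments: meagerness of $\mathcal{M}'$ through a symmetry-dimension stratification together with the Baire category theorem, and non-separatedness of $\mathcal{M}'/\mathcal{G}$ by exhibiting an explicit degeneration inside one of the classified families.

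First I would use the structural input from \S\ref{sect:Scalar}--\S\ref{sect:Sytems}: up to $\mathcal{G}$ there are only finitely many realizations that are essentially point or essentially contact, each containing one of the algebras \eqref{L1}, \eqref{L2}, \eqref{L3}--\eqref{L8}, and each therefore of dimension at least $5$. On the other hand, any single non-vanishing point or contact vector field can be locally rectified, and in its normal form it permutes the leaves of a foliation $\{x=\op{const}\}$; thus a $1$-dimensional symmetry algebra is always essentially fiber-preserving. Hence $\mathcal{M}'$ is contained in the locus $\mathcal{M}_{\geq 5}$ of systems whose full symmetry algebra has dimension $\geq 5$. Since the symmetry dimension is upper semicontinuous, each stratum $\mathcal{M}_{\geq d}$ is closed; and because the generic solution of the ``admits one symmetry'' equation carries exactly a $1$-dimensional symmetry algebra, the stratum $\mathcal{M}_{=1}$ is dense and every $\mathcal{M}_{\geq d}$ with $d\geq 2$ is nowhere dense. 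Writing $\mathcal{M}=\cup_{k,p}\mathcal{M}^k_p$ as a countable union, this exhibits $\mathcal{M}'$ as a countable union of nowhere dense sets, i.e. as meager.

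For the second assertion I would produce two inequivalent points of $\mathcal{M}'/\mathcal{G}$ with no disjoint neighbourhoods. Consider the essentially contact scalar family of order $5$ from \eqref{ODe1},
\[
\E_c=\{R_5^3=c\,R_3^8\}\subset J^5,\qquad c\in\C .
\]
The rescaling $y\mapsto\lambda y$ belongs to $\mathcal{G}$ and acts by $R_3\mapsto\lambda R_3$, $R_5\mapsto\lambda^2 R_5$, hence by $c\mapsto\lambda^2 c$; therefore all $\E_c$ with $c\neq 0$ form a single $\mathcal{G}$-orbit with symmetry $\mathfrak{sl}(2)\ltimes\mathfrak{heis}(3)$, whereas $\E_0=\{R_5=0\}$ is a distinct, strictly more symmetric class with symmetry $\mathfrak{p}_1=\mathfrak{gl}(2)\ltimes\mathfrak{heis}(3)$. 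As $c\to 0$ the polynomials $R_5^3-c\,R_3^8$ converge to $R_5^3$, whose zero locus is $\E_0$, so that $\E_0\in\overline{\mathcal{G}\cdot\E_1}$ and hence $[\E_0]\in\overline{\{[\E_1]\}}$ in $\mathcal{M}'/\mathcal{G}$. Every open neighbourhood of $[\E_0]$ then contains $[\E_1]$, so the distinct points $[\E_0]$ and $[\E_1]$ cannot be separated by disjoint open sets, which is precisely non-separatedness. The inclusions $\mathfrak{sl}(2)\ltimes\mathfrak{heis}(3)\subset\mathfrak{p}_1\subset\mathfrak{sp}(4)$ of Figure~\ref{fig:2Dinclusions}, realized by the equations \eqref{ODe1}, indicate that such non-separated pairs are abundant rather than isolated.

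I expect the main obstacle to be analytic rather than structural: since the classification is already in hand, the work lies in equipping the infinite-dimensional space $\mathcal{M}$ of all symmetric systems with a genuine topology -- for instance through the finite-dimensional truncations $\mathcal{M}^k_p$ and an inductive limit -- in which the Baire category theorem is available, and then rigorously proving the two semicontinuity statements used above, namely that the exactly-one-symmetry stratum is dense and each higher-symmetry stratum is nowhere dense. Concretely this amounts to showing that any supernumerary symmetry can be destroyed by an arbitrarily small admissible perturbation that still leaves a single symmetry intact; this density argument is the delicate point that the present sketch does not fully carry out.
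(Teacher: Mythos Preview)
Your proposal is reasonable and, like the paper's own proof, is a sketch rather than a fully rigorous argument (the paper explicitly says this section is ``somewhat speculative''). However, your route differs substantially from the paper's.

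For meagerness, the paper does \emph{not} stratify by symmetry dimension. Instead it argues locally around each of the finitely many minimal irreducible/primitive algebras $\g$: pick a subalgebra $\mathfrak{h}\subset\g$ that \emph{is} essentially fiber-preserving (e.g.\ a maximal abelian one), and observe that the $\g$-invariant equations form a proper Zariski-closed subvariety inside the $\mathfrak{h}$-invariant ones. Since the $\mathfrak{h}$-invariant equations already lie in $\mathcal{M}$, this immediately exhibits the $\g$-invariant locus as nowhere dense \emph{inside} $\mathcal{M}$, without ever appealing to density of $\mathcal{M}_{=1}$. Your approach replaces this with the claim that the exactly-one-symmetry stratum is dense; as you correctly identify, this is the delicate point you have not justified, and it is precisely what the paper's subalgebra trick circumvents. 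The paper's argument is both shorter and avoids the perturbation problem you flag as the main obstacle.

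For non-separatedness, your explicit degeneration $\E_c\to\E_0$ along the family \eqref{ODe1} is correct and more concrete than the paper's argument, which again uses the $\mathfrak{h}\subset\g$ comparison: the $\mathcal{G}$-orbit of a $\g$-symmetric equation has smaller dimension than the orbit of a nearby merely $\mathfrak{h}$-symmetric one (larger isotropy gives smaller orbit), so such orbits cannot be separated in the quotient. Your example is an instance of exactly this phenomenon with $\mathfrak{h}=\mathfrak{sl}(2)\ltimes\mathfrak{heis}(3)\subset\mathfrak{p}_1=\g$, so the two arguments are compatible; yours has the advantage of being verifiable by hand.
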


 \begin{proof}
Fix numbers $k,p$.
If the symmetry algebra $\g$ of an ODE system is contact irreducible or primitive, consider a subalgebra $\mathfrak{h}$ that does
not possess these properties. Such $\mathfrak{h}$ always exists, for instance a maximal Abelian subalgebra will work.
The set of ODE systems invariant with respect to $\mathfrak{h}$ is much larger: $i$-jets of those form an algebraic variety with
$\g$-invariant equations being a proper Zariski closed subset for large $i$.
Also, the $\mathcal{G}$-action on a representative ODE system with $\g$-symmetry yields a smaller and lower-dimensional
set compared to that with $\mathfrak{h}$-symmetry: the orbit is larger if the isotropy is smaller.
Now uniting these singular orbits over $k,p$ proves both statements.
 \end{proof}

One can also restrict to subset $\mathcal{M}_{(l)}$ of ODEs with symmetry algebra of dimension at least $l$,
and the above statement holds true  when replacing $\mathcal{M}$ with $\mathcal{M}_{(l)}$. Let us briefly indicate the idea for the case $m=1$ and $m=2$.
The proof holds literally for $l=1,2,3,4$ as a non-irreducible, non-primitive subalgebra of such dimension exists.
For $l$ in the intermediate range 5 to 15 the claim is based on the following observation:
a common subalgebra (for instance $\langle\p_x,\p_y\rangle$) allows both irreducible/primitive as well as
the opposite extensions. The latter however are less rigid and hence carry more parameters (here we do not
restrict order of the equation, or restrict it by a sufficiently large number).
For $l>15$ the claim is trivially true because there are no such irreducible/primitive Lie algebras of
vector fields in 2- or 3-spaces.

\textbf{Example:} Consider the space of ODEs defined by equations of the form
\[ y_2 =f(x,y). \]
The vector field $X=a(x,y) \partial_x+b(x,y) \partial_y$ is a point symmetry if and only if
\[X^{(2)}\left(y_2-f(x,y)\right)\Big|_{y_2=f(x,y)} = 0.\] The left-hand-side is a cubic polynomial in $y_1$ whose vanishing is equivalent to the vanishing of its four coefficients. The system obtained in this way can be written
\begin{equation} \label{eq:sym}
\begin{split}
 a_{xx} = 2 b_{xy}-3  f a_{y} , \qquad
 a_{yy} &= 0, \\
 b_{xx} = f_x a+ 2f a_x+ f_y b- f b_{y}, \qquad
 b_{yy} &= 2 a_{xy}.
 \end{split}
\end{equation}

Since system \eqref{eq:sym} is overdetermined, there exist compatibility conditions.
If all of them are satisfied (for some class of functions $f$), we get a finite type system with 8-dimensional solution space. When the compatibility conditions are not satisfied, we get more constraints on $(a,b)$, and thus the dimension of the solution space shrinks.
Prolonging \eqref{eq:sym} to the 4-jets
and eliminating derivatives of $a,b$ of orders 2, 3 and 4, we get the following two constraints :
\begin{equation} \label{eq:compatibility1}
a_y f_{yy}=0, \qquad 2 f_{yy} a_x+f_{yy} b_y+f_{xyy} a+f_{yyy} b=0.
\end{equation}
These compatibility conditions are satisfied if and only if $f$ is a solution to
\begin{equation}
 f_{yy} = 0.  \label{eq:compsl3}
 \end{equation}
Then by the Lie-Liouville criterion the equation is trivializable, i.e.\ point equivalent to $y_2=0$.

If we assume $f_{yy}\not \equiv 0$, then $a_y=0$ and thus $a_{xy}=0$.  The equation $a_y=0$ together with equations 1, 2 and 4 of \eqref{eq:sym} implies that $a=a(x)$, $b=b(x)+(\tfrac12a'(x)+c) y$. Then the second equation of \eqref{eq:compatibility1} reduces to the derivative of the remaining equation of \eqref{eq:sym}. It follows that the ODE has $\leq 6$ independent symmetries and further compatibility analysis reduces this number to at most 2 (it is a known fact that symmetry algebras of second-order scalar ODEs can have dimension 0, 1, 2, 3 or 8).

Alternatively, compute the $k$-th prolongation of \eqref{eq:sym} and eliminate $a_{xx},a_{yy},b_{xx},b_{yy}$
and higher-order derivatives to obtain a linear system of equations on $a,b,a_x,a_y,b_x,b_y,a_{xy},b_{xy}$
with coefficients depending on the $(k+1)$-jet of $f$. The rank of this sequence of systems will stabilize at some integer $r \leq 8$. If $y_2=f(x,y)$ has symmetries, then $r \leq 7$. Imposing this condition on $r$ puts constraints on $f$, which for our particular class of ODEs defines the set $\mathcal{M}$ of symmetric ODEs.
The first of such constraints is (where we denote $f_{nm}=\p_x^n\p_y^mf$)
 \begin{multline*}
(3f_{02}f_{13}-2f_{12}f_{03})f_{05}^2 +(f_{12}f_{04}^2 -(3f_{02}f_{14} +f_{03}f_{13})f_{04} -f_{02}f_{03}f_{15}
+2f_{03}^2f_{14})f_{05} \\
+2f_{02}f_{04}^2f_{15}
 - (2f_{02}f_{13}f_{06} -f_{12}f_{03}f_{06} +f_{03}^2f_{15})f_{04} + f_{02}f_{03}f_{14}f_{06}=0.
 \end{multline*}

\section{Conclusion}\label{OutL}

In this paper we demonstrated that scalar ODEs and ODE systems (in two unknowns) with the symmetry
algebra being essentially contact or essentially point are special among all symmetric differential equations,
and we listed them all via (absolute, relative or conditional) differential invariants.
This follows the general approach of Sophus Lie \cite{L1,L2}, and for scalar ODEs was already
considered in \cite{WMQ}. Our approach is more of a global nature. In particular we distinguish between
algebraic and analytic ODEs, though the generating invariants can always be chosen rational in higher jets.
For systems (pairs of ODEs) our results are entirely new.

Thus the main bulk of ODEs with infinitesimal symmetries constitute equations having
essentially fiber-preserving symmetries, in particular their equivalence problem can be studied with
respect to fibre-preserving transformations of variables. This indeed already attracted an interest
for scalar ODEs in lower orders, see for instance \cite{HK,G,GN}.

A generalization of our results to ODE systems with more dependent variables is possible:
one needs a classification of primitive Lie algebras of vector fields, and this is indeed available.
In \cite{P} J.\ Page continued Lie's program and classified primitive Lie algebras of vector fields in four dimensions. 
There is also a classification of primitive pairs $(\g,\mathfrak{h})$ of Lie algebras, such that $\mathfrak{h}$
is a maximal subalgebra of $\g$, due to Morozov and Dynkin \cite{M,D}. This leads to a
classification of primitive actions on spaces of any dimension, see \cite{Go,Dr}.

The procedure in \S\ref{sect:algorithm} can be applied for differential equations with more dependent and/or independent variables with some modifications (it can work for PDEs as well).
We already noticed an increase in complexity when passing from scalar ODEs to pairs of ODEs.
More complexity issues come in higher dimensions: one can encounter vector relative invariants with
values in non-trivial bundles and conditional-conditional invariants, etc.

One can also approach the classification in the real case.
Already Sophus Lie started this program, and there exist several versions of his real classification,
see e.g.\ \cite{GKO}. The list of primitive Lie algebras of vector fields in real plane is indeed longer
than that in complex plane.
The real classification of irreducible Lie algebras of contact vector fields was attained too, together with computation of
fundamental differential invariants, see \cite{DK}. However the complete classification in the real
case is considerably more complicated, as one also encounters continuous parameters and more branching.
Moreover a possible generalization to real smooth case can be only attained locally near generic/regular points,
so some singular equations will not be covered.

\appendix

\section{Finite-dimensionality of symmetry algebra}\label{ApA}

In this section we work over $\R$. Indeed, working over $\C$ in the main bulk of this paper
was important for classification issues, but otherwise one can consider the real case.
The purpose of this appendix is to give a criterion for finite-dimensionality of the symmetry algebra
of an ODE system (we restrict to the scalar case and pairs of equations). This justifies our
application of the classification of finite-dimensional Lie algebras of vector fields in 2 and 3 dimensions. We prove the following propositions\footnote{Proposition \ref{prop:finitedim1} is well-known, see for example \cite{Sok}. Finite-dimensionality of the symmetry algebra of systems of ODEs of the same order is proven for example in \cite{DKM},  see also the discussion and references in \cite[p.206]{O}. We are unaware of a similar proof for systems of ODEs of different orders; in addition our proof differs from loc.cit.}:
\begin{prop} \label{prop:finitedim1}
(1) The Lie algebra of point symmetries of a scalar ODE of order $\geq 2$ is finite-dimensional. (2) The Lie algebra of contact symmetries of a scalar ODE of order $>2$ is finite-dimensional.
\end{prop}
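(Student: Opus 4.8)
The plan is to reduce both statements to a finite-type analysis of the linear determining system, following the mechanism already visible in the Example of \S\ref{sect:moduli}. Throughout I would work near a generic point, where the condition $F_{y_k}\not\equiv0$ lets us write the equation in normal form $y_k=f(x,y,y_1,\dots,y_{k-1})$, so that $(x,y,y_1,\dots,y_{k-1})$ are free coordinates on $\E$. Since an analytic symmetry is uniquely determined by its Taylor jet at such a point, it then suffices to bound the dimension of the space of formal solutions of the determining equations; finite-dimensionality of the symmetry algebra follows.

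For part (1) a point symmetry is $X=a(x,y)\p_x+b(x,y)\p_y$, with prolongation coefficients defined by $b^{(0)}=b$ and $b^{(i)}=D_x(b^{(i-1)})-y_iD_x(a)$, each a polynomial in $y_1,\dots,y_i$ whose coefficients are derivatives of $a,b$ of order $\le i$. The tangency condition $X^{(k)}(y_k-f)|_{\E}=0$ reads
\[
b^{(k)}\big|_{y_k=f}=af_x+bf_y+\sum_{i=1}^{k-1}b^{(i)}f_{y_i},
\]
and after substituting $y_k=f$ the difference of the two sides is a polynomial in the free jet coordinates $y_1,\dots,y_{k-1}$; its vanishing is equivalent to the vanishing of each of its finitely many coefficients. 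This produces a finite, linear, overdetermined system of PDEs on the two functions $a,b$, exactly of the shape \eqref{eq:sym}. The decisive step is the symbol computation: reading off the coefficients of the highest powers of the jet variables (which for $k\ge2$ originate from the recurring factors $-a_yy_1^2$ and $-a_yy_1y_i$ in the prolongation coefficients and survive the substitution) one finds that the top-order derivatives of $a,b$ are expressed through lower-order ones with coefficients built from $f$. For $k=2$ this pins down $a_{yy},a_{xx},b_{yy},b_{xx}$ as in \eqref{eq:sym}, leaving only $a,b,a_x,a_y,b_x,b_y,a_{xy},b_{xy}$ free, so $\dim\le8$. For general $k$ the same mechanism shows the system is of finite type: after finitely many prolongations every derivative of $a,b$ above some order $N=N(k)$ is determined by the lower ones, the evaluation map sending a symmetry to its $N$-jet at the chosen point is injective, and the solution space embeds into a finite-dimensional jet space. (The order $N$ grows with $k$, consistent with the maximal dimension $k+4$ realized by $y_k=0$.)

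For part (2) a contact symmetry on $J^1=\C^3(x,y,y_1)$ is the contact field generated by a single function $h(x,y,y_1)$, and $X^{(k)}(y_k-f)|_{\E}=0$ is again a polynomial identity in the free coordinates. The difference from the point case is how many coefficients this identity carries. When $k>2$ the generating function $h$ depends on $x,y,y_1$ only, while $\E$ has the additional free coordinates $y_2,\dots,y_{k-1}$, so the condition splits into several linear PDEs on $h$; this overdetermined system has, by the same symbol analysis, trivial characteristic variety and hence is of finite type, giving a finite-dimensional solution space. When $k=2$, by contrast, $\E=\{y_2=f\}$ has coordinates $x,y,y_1$ and the condition is a single second-order PDE on the single function $h$ of these three variables: the system is determined rather than overdetermined, the symbol is not injective, and the contact symmetry algebra is generically infinite-dimensional. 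This is precisely why the hypothesis must be strengthened to $k>2$ in the contact case.

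I expect the main obstacle to be the symbol analysis itself: one must verify that the coefficients of the top-degree monomials in the determining polynomial genuinely determine the leading derivatives of the unknowns (i.e.\ that the characteristic variety is trivial), and that this property is inherited after one prolongation, so that no residual infinite-dimensional freedom remains. Once finite type is secured, finite-dimensionality of the solution space, and hence of the symmetry algebra, is automatic.
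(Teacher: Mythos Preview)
Your approach is correct in outline and genuinely different from the paper's. You argue via the determining equations: write the ODE in normal form, impose tangency $X^{(k)}(y_k-f)|_{\E}=0$, split by monomials in the free jet coordinates, and show the resulting linear system on $(a,b)$ (respectively on the generating function $h$) is of finite type. This is the classical route and, once the symbol analysis you flag as the ``main obstacle'' is carried out, it does give the result. The paper instead encodes the ODE $\E$ as a pair $(J^{k-1},\Delta)$ with $\Delta$ the split rank~2 distribution $\langle\D^\E_x\rangle\oplus\langle\p_{y_{k-1}}\rangle$, computes the Carnot algebra $\mathfrak{m}$ of the weak derived flag, identifies the structure reduction $\g_0$, and invokes finiteness of the Tanaka prolongation $\op{pr}(\mathfrak{m},\g_0)$ to bound the symmetry dimension.

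What each buys: your determining-equations method is more elementary and self-contained, and the $k=2$ point case is already worked out in the example of \S\ref{sect:moduli}; but the symbol computation for general $k$ (and especially for the contact case) needs to be written out carefully, and the resulting dimension bound is implicit. The Tanaka route requires importing machinery from \cite{T}, but it yields sharp explicit bounds ($8$ for $k=2$, $10$ for $k=3$, $k+4$ for $k\ge4$) in one stroke, treats the point and contact cases uniformly via the choice of $\g_0$, and extends without modification to the mixed-order systems of Proposition~\ref{prop:finitedim2}, where your finite-type analysis would become substantially more involved.

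One minor point: your parenthetical ``The order $N$ grows with $k$'' conflates the order at which the determining system closes up with the dimension of its solution space; in fact for point symmetries the closure order is bounded independently of $k$ (second-order in $a,b$ suffices, as in \eqref{eq:sym}), and it is only the rank of the resulting Frobenius system that grows like $k+4$.
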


\begin{prop} \label{prop:finitedim2}
The Lie algebra of (point) symmetries of a system of 2 ODEs (with 2 independent variables) of orders $(k,l)$, $k \geq l \geq 1$, is finite-dimensional provided that either $l>1$ holds or that $l=1$, $k>1$ and condition \eqref{eq1d} holds for the differential equation of order 1.
\end{prop}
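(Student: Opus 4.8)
The plan is to show that the determining equations for an infinitesimal point symmetry constitute an overdetermined linear PDE system of finite type on the coefficients of the field, so that its solution space embeds into a finite-dimensional jet space. First I would set up these equations. A point symmetry is a vector field $X=a\p_x+b\p_y+c\p_z$ with $a,b,c\in\mathcal{O}(J^0)$, and by the prolongation formula the coefficient of $\p_{y_i}$ (resp.\ $\p_{z_i}$) in $X^{(k)}$ is a polynomial in the jet variables $y_1,\dots,y_i,z_1,\dots,z_i$ whose coefficients are partial derivatives of $a,b,c$ of order $\le i$. The symmetry conditions are $X^{(k)}(F)|_{\E}=0$ and $X^{(l)}(G)|_{\E}=0$. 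Using the nondegeneracy $F_{y_k}G_{z_l}-F_{z_k}G_{y_l}\neq0$, I would solve $F=0$ together with $D_x^jG=0$ for $0\le j\le k-l$ for the top jets $y_k,z_l,\dots,z_k$ and substitute, turning both conditions into polynomial identities in the free jet coordinates $x,y,z,y_1,\dots,y_{k-1},z_1,\dots,z_{l-1}$.

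The second step is to collect coefficients. Equating to zero the coefficient of each monomial in the free jet variables produces a finite overdetermined linear system $\g$-determining system on $(a,b,c)$, with coefficients built from the fixed functions $F,G$. The heart of the argument is to read off, from the top-degree monomials, that the highest-order derivatives of $a,b,c$ appearing in the system are expressed algebraically through lower-order ones; this is the finite-type (involutivity) claim and relies on the nondegeneracy condition providing two independent ``highest'' directions in each prolongation.

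Once the determining system is shown to be of finite type, finite-dimensionality follows formally: there is an integer $N$, depending only on $k$ and $l$, such that every solution is uniquely determined by its jet $j^N_\theta(a,b,c)$ at any fixed point $\theta\in J^0$. The evaluation map $X\mapsto j^N_\theta(a,b,c)$ then embeds the symmetry algebra into the finite-dimensional space of $N$-jets of triples of functions, bounding its dimension. For $l\ge2$ both defining constraints are genuinely of order $\ge2$, so the coefficient analysis of the previous step closes the system to finite type directly, with no extra hypotheses.

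\textbf{The main obstacle} is the borderline case $l=1$. Here the order-one constraint $G=0$ contributes a determining subsystem which, examined in isolation, behaves like the symmetry problem of a single first-order equation and is of infinite type, admitting functional freedom. I would treat this as a separate lemma: first analyze the order-one equation on its own to locate precisely the degenerate loci responsible for the infinite freedom, then show that the assumption $k>1$ supplies, through the higher-order equation $F=0$, the additional prolonged directions needed to absorb that freedom, \emph{provided} the first-order equation avoids those degenerate loci. Condition \eqref{eq1d} is exactly the hypothesis excluding them, and verifying that it is both necessary and sufficient to restore finite type when combined with the $F$-analysis is where the real work lies.
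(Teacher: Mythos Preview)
Your plan is viable but follows a genuinely different route from the paper's.  You propose the classical determining-equations method: write out $X^{(k)}(F)|_{\E}=0$ and $X^{(l)}(G)|_{\E}=0$ as polynomial identities in the free jet variables, collect coefficients, and argue that the resulting linear PDE system on $(a,b,c)$ is of finite type.  The paper instead encodes the ODE system geometrically as a split distribution $\Delta$ on the equation manifold $\E$, computes the associated Carnot (symbol) algebra $\mathfrak{m}$ and the structure reduction $\g_0$, and then invokes Tanaka's finiteness criterion for the prolongation $\op{pr}(\mathfrak{m},\g_0)$.  This yields not only finite-dimensionality but explicit sharp bounds on $\dim\g$ in terms of $k$ and $l$, which your approach would not produce without substantial extra work.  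For the delicate case $l=1$, the paper reads condition \eqref{eq1d} directly as the complete nonholonomicity of the rank-2 distribution $\Delta=\langle\D^\E_x,\p_{y_{k-1}}\rangle$ (equivalently, as the contact condition on the 1-form $dz-h_0\,dx-h_1\,dy$ when $h$ is affine in $y_1$); in your framework you would have to extract the same dichotomy by tracking exactly which top-order derivatives of $(a,b,c)$ fail to be determined when $h_{y_1y_1}=0$ and $J=0$, which is doable but more laborious.

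One small correction: you describe \eqref{eq1d} as ``necessary and sufficient'' for finite type once combined with the order-$k$ equation.  The proposition only asserts sufficiency, and in fact when \eqref{eq1d} fails the system is point-equivalent to $z_1=0$ together with a scalar ODE $y_k=f$, whose symmetry algebra is infinite-dimensional only if $f_z=0$.  So \eqref{eq1d} is not necessary, and your separate lemma should aim only at sufficiency.
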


The conditions of Propositions \ref{prop:finitedim1} and \ref{prop:finitedim2} are satisfied for all equations arising in our paper, except for the systems given by 2 equations of first order. Specifically, let us mention that in the case of ODE systems, the case $l=1$ is realized for special cases given by the relative invariant $R_1=0$, and the first part of condition \eqref{eq1d} holds for $R_1$ of \S\ref{sect:Minkowski} and \S\ref{sect:so4}, while the second part of this condition holds for $R_1$ of \S\ref{sect:sp4}, \S\ref{sect:Lie16} and \S\ref{sect:Lie27}. On the other hand, the system $R_1=0,Q_1=0$ from \S\ref{sect:Lie27} and the system $y_1=0,z_1=0$ appearing in Remark \ref{rk:point} has infinite-dimensional symmetry algebra.

\begin{proof}
Let us start with scalar ODEs of principal type, i.e.\ of the form $y_n=f(x,y,y_1,\dots,y_{n-1})$,
and assume $n>1$ (all scalar ODEs of order $n=1$ are locally equivalent and have infinite-dimensional
symmetry algebra).
Geometrically this equation $\E$ can be identified with the space $J^{n-1}(\R)\simeq\R^{n+1}$
equipped with rank 2 distribution $\Delta$ spanned by
 $$
\D^\E_x=\p_x+y_1\p_y+y_2\p_{y_1}+\dots+y_{n-1}\p_{y_{n-2}}+f\p_{y_{n-1}}\,\text{ and }\ \p_{y_{n-1}}.
 $$
The splitting $\Delta=\langle\D^\E_x\rangle\oplus\langle\p_{y_{n-1}}\rangle$
encodes consideration of $(\E,\Delta)$ modulo point transformations for $n=2$ and contact
transformations for $n\ge3$. The weak derived flag of this {\it nonholonomic} distribution
$\Delta_1=\Delta$, $\Delta_i=[\Delta,\Delta_{i-1}]$ is filtered and the associated graded space
$\mathfrak{m}=\sum\g_{-i}$, $\g_{-i}=\Delta_i/\Delta_{i-1}$, is naturally a graded nilpotent Lie algebra,
called the {\it symbol} of the distribution or its {\it Carnot algebra}, cf.\ \cite{AK}.
For the above $\Delta$ it is
 $$
\mathfrak{m}=\g_{-n}\oplus\dots\oplus\g_{-1}
\ \text{ with }\ \dim\g_{-n}=\dots=\dim\g_{-2}=1, \ \dim\g_{-1}=2.
 $$
The Lie algebra structure on $\mathfrak{m}$ is unique of so-called Goursat type. The split condition
is equivalent to reduction of $\mathfrak{der}_0(\mathfrak{m})$, which is $\mathfrak{gl}(2,\R)$ for $n=2$ and
a Borel (3D) subalgebra $\mathfrak{b}\subset\mathfrak{gl}(2,\R)$ for $n>2$,
to $\g_0=\R\oplus\R$ (rescalings of the two directions).

The maximal graded Lie algebra $\g$ with $\g_{\leq0}=\mathfrak{m}\oplus\g_0$,
such that $\g_{\ge0}$ acts effectively on $\mathfrak{m}$, is called the
{\it Tanaka prolongation} $\g=\op{pr}(\mathfrak{m},\g_0)$, see \cite{T}.
(In the case $\mathfrak{m}$ is fundamental, i.e.\ generated by $\g_{-1}$, the effectiveness
is equivalent to $[v,\g_{-1}]=0$, $v\in \g_k$, $k\ge0$ $\Rightarrow$ $v=0$.)

The Tanaka prolongation of the above algebra is finite-dimensional by a general criterion from \cite{T}.
Actually it is equal to $\g=\g_{-2}\oplus\dots\oplus\g_2\simeq\mathfrak{sl}(3,\R)$ for $n=2$,
$\g=\g_{-3}\oplus\dots\oplus\g_3\simeq\mathfrak{sp}(4,\R)$ for $n=3$ and
$\g=\g_{-n}\oplus\dots\oplus\g_1\simeq\mathfrak{gl}(2,\R)\ltimes\R^n$ for $n\ge4$.
By \cite{T} the maximal symmetry dimension of $(\E,\Delta)$ (with splitting reduction of the structure group)
is bounded by $\dim\g$. Therefore the symmetry dimension of $n$-th order scalar ODE is bounded by 8 for $n=2$,
by 10 for $n=3$ and by $n+4$ for $n\ge4$.
Moreover the existence of the grading element implies uniqueness of the maximal symmetry model.

Similarly for systems of pairs of ODEs of orders $k\ge l>1$
 $$
y_k=f(x,y,\dots,y_{k-1},z,\dots,z_{l-1}),\
z_l=h(x,y,\dots,y_l,z,\dots,z_{l-1})
 $$
considered as submanifolds $\E\subset J^k$ (the second equation being prolonged $k-l$ times)
the geometry with respect to point transformations is encoded\footnote{Using embedding in the mixed jet-space,
the encoded geometry is different \cite{AK,DZ}: the Carnot algebra $\mathfrak{m}$ associated to the
split distribution $\Delta=\langle\D^\E_x\rangle\oplus\langle\p_{y_{k-1}},\p_{z_{l-1}}\rangle$ and its weak derived flag
has graded components with $\dim\g_{-k}=\dots=\dim\g_{-l-1}=1$, $\dim\g_{-l}=\dots=\dim\g_{-2}=2$, $\dim\g_{-1}=3$.}
via the filtration of $T\E=\Delta_k$: $\Delta_{k-i}=d\pi_{k,i-1}^{-1}(\langle d\pi_{k,i-1}\D^\E_x\rangle)$, $i=1,\dots,k-1$,
where $\D^\E_x$ is the total derivative on the equation. This distribution is split into horizontal and vertical parts as follows:
 $$
\Delta_{k-i}=\langle\D^\E_x\rangle\oplus(\langle\p_{y_j},\p_{z_j}\rangle_{j=i}^{k-1}\cap T\E).
 $$
The corresponding Carnot algebra (for $k\neq l$ it is not fundamental) equals
 \begin{multline*}
\mathfrak{m}=\g_{-k}\oplus\dots\oplus\g_{-l}\oplus\dots\oplus\g_{-1}\ \text{ with }\\
 \begin{array}{l}
\dim\g_{-k}=\dots=\dim\g_{l-k-1}=2, \ \dim\g_{l-k}=\dots=\dim\g_{-2}=1, \ \dim\g_{-1}=2 \ \text{ for $k>l$};\\
\dim\g_{-k}=\dots=\dim\g_{-2}=2, \ \dim\g_{-1}=3 \ \text{ for $k=l$}.
 \end{array}
 \end{multline*}
Reduction of the structure group corresponds to $\g_0=\R\oplus\mathfrak{gl}(2,\R)$ for $k=l$ and
$\g_0=\R\oplus\mathfrak{b}\subset\R\oplus\mathfrak{gl}(2,\R)$ for $k>l$.
The Tanaka prolongation $\g=\op{pr}(\mathfrak{m},\g_0)$ is again finite-dimensional for $l>1$,
moreover it can be explicitly computed and the symmetry dimension is as follows, cf.\ \cite{DZ,KT}:
 $$
\dim\g=\left\{
\begin{array}{ll}
9+\binom{k+1}{2}+3\delta_{k,l}& \text{ for }k\geq l=2;\\
5+l+\left[\frac{k+l-2}{l-1}\right]\left(k-\frac{l-1}2\left[\frac{k-1}{l-1}\right]\right)+\delta_{k,l}
& \text{ for }k\geq l>2.
\end{array}
\right.
 $$
This gives an effective bound on the symmetry dimension of systems of pairs of ODEs.

When $l=1$ the situation is different. If $k=1$ the equation has infinite-dimensional symmetry algebra.
So consider the case $k>1$ with $\E$ given by $y_k=f(x,y,y_1,\dots,y_{k-1},z)$, $z_1=h(x,y,y_1,z)$.
In this case the distribution on the equation has rank 2
(as in the scalar case), namely
 $$
\Delta=\langle\D^\E_x\rangle\oplus\langle\p_{y_{k-1}}\rangle,\ \text{ where }\
\D^\E_x=\p_x+y_1\p_y+\dots+f\p_{y_{k-1}}+h\p_z.
 $$
The distribution is completely nonholonomic if either
 \begin{equation}\label{eq1d}
h_{y_1y_1}\neq0\ \text{ or }\ J:=h_{xy_1}+h\,h_{y_1z}+y_1h_{y\,y_1}-h_y-h_zh_{y_1}\neq0.
 \end{equation}
The symbol of the distribution is
 \begin{equation}\label{eq1m}
\mathfrak{m}=\g_{-k-1}\oplus\dots\oplus\g_{-1}
\ \text{ with }\ \dim\g_{-k-1}=\dots=\dim\g_{-2}=1, \ \dim\g_{-1}=2.
 \end{equation}
However depending on which condition holds in \eqref{eq1d} the Carnot algebra structure changes.
In the first case $h_{y_1y_1}\neq0$ the distribution is of finite type, $\g_0=\R\oplus\R$ and the next prolongation
vanishes. Thus the symmetry dimension is bounded by $\dim\g=k+4$.

In the case $h_{y_1y_1}=0$ but $J\neq0$ we write $h=h_0+h_1y_1$, where $h_i=h_i(x,y,z)$.
Then $J=h_{1x}+h_0h_{1z}-h_{0y}-h_1h_{0z}\neq0$ is the condition that 1-form $\alpha=dz-h_0dx-h_1dy$
in $J^0=\R^3$ is contact. In this case the Carnot structure \eqref{eq1m} is of infinite type, but the structure reduction
$\mathfrak{b}\mapsto\g_0=\R\oplus\R$ makes the prolongation $\g=\op{pr}(\mathfrak{m},\g_0)$ finite-dimensional,
hence the symmetry dimension is bounded by $\dim\g=k+5$.

Finally, if $h_{y_1y_1}=0$ and $J=0$, then the above 1-form $\alpha$ determines an integrable distribution,
which by a point transformation is equivalent to the second equation being $z_1=0$.
In this case the symmetry can be infinite-dimensional, explicitly this happens when $f_z=0$.
\end{proof}

\begin{remark}
From the end of the proof we see that the only candidates for systems with infinite-dimensional symmetry algebra are (equivalent to) the system
 $$
y_k=f(x,y,y_1,\dots,y_{k-1}),\quad z_1=0.
 $$
This system has infinite-dimensional symmetry subalgebra $\{Z(z)\p_z\}$,
and quotient by the corresponding foliation reduces the problem of essential point symmetries to that for scalar ODEs, which we discussed in Section 1.
\end{remark}

\section{Lie algebras of vector fields on $\mathbb C^3$ preserving a 1-dimensional foliation} \label{ApB}



In \cite{L2} Lie listed all Lie algebras of vector fields in $\mathbb C^3$ that preserve a 1-dimensional foliation
\[\varphi(x,y,z)=\text{const}, \qquad  \psi(x,y,z)=\text{const},\]
while not preserving any 2-dimensional foliation of the form \[\Omega(\varphi(x,y,z), \psi(x,y,z)) = \text{const}.\]
The list consists of 21 entries (some of these are families of Lie algebras) that are numbered from 13 to 33. See Satz 3 in Section 44 of \cite{L2} for the main statement. In this subsection, we will refer to these entries by [Lie{\hskip1pt}13] to [Lie{\hskip1pt}33], respectively.  These Lie algebras are the finite-dimensional Lie algebras of vector fields on $\mathbb C^2 \times \mathbb C$ that project to one of the three primitive Lie algebras of vector fields on $\mathbb C^2$.

All the Lie algebras in Lie's list contain one of the following as a Lie subalgebra ([Lie{\hskip1pt}14], \mbox{[Lie{\hskip1pt}13]}, [Lie{\hskip1pt}16])\footnote{[Lie{\hskip1pt}14] is identical with \eqref{L2}.}:
\begin{gather*}
  \langle \partial_x, \partial_y, x \partial_y, y \partial_x, x \partial_x-y\partial_y \rangle, \\
  \langle  \partial_x, \partial_y, x \partial_y+\partial_z, y \partial_x-z^2 \partial_z, x \partial_x-y\partial_y -2z\partial_z \rangle, \\
  \langle \partial_x, \partial_y+x \partial_z, x \partial_y + \tfrac{1}{2} x^2 \partial_z, x \partial_x-y \partial_y, y \partial_x+\tfrac{1}{2} y^2 \partial_z, \partial_z \rangle.
  \end{gather*}
However, the first two of these actually preserves the 2-dimensional foliation $z=\text{const}$, and so do the Lie algebras [Lie{\hskip1pt}15], [Lie{\hskip1pt}19], [Lie{\hskip1pt}20], [Lie{\hskip1pt}21], [Lie{\hskip1pt}22], [Lie{\hskip1pt}26], [Lie{\hskip1pt}28] and [Lie{\hskip1pt}33]. Removing these from our consideration, we are left with 11 remaining families of Lie algebras. Note that the families  [Lie{\hskip1pt}17], [Lie{\hskip1pt}18], [Lie{\hskip1pt}23], [Lie{\hskip1pt}25], [Lie{\hskip1pt}30] and [Lie{\hskip1pt}32]
all contain elements $x^i y^j \partial_z$ for $i+j \leq h$ for some non-negative integer $h$, and they preserve the 2-dimensional foliation $z=\text{const}$ if and only if $h=0$.

\begin{prop}\label{prop:imprimitive}
The finite-dimensional Lie algebras of vector fields in $\mathbb C^3$ preserving a 1-dimen\-sional foliation and no 2-dimension foliation are the following: \textup{[Lie{\hskip1pt}16], [Lie{\hskip1pt}24], [Lie{\hskip1pt}27], [Lie{\hskip1pt}29], [Lie{\hskip1pt}31], {[Lie{\hskip1pt}17]$_{h\geq 1}$}, [Lie{\hskip1pt}18]$_{h\geq 1}$, [Lie{\hskip1pt}23]$_{h\geq 1,a}$, [Lie{\hskip1pt}25]$_{h\geq 1}$,  [Lie{\hskip1pt}30]$_{h\geq 1}$, [Lie{\hskip1pt}32]$_{h\geq 1}$}.
\end{prop}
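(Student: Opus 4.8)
The plan is to treat the proposition as an \emph{elimination} within Lie's classification rather than a classification from scratch. By Satz~3 of \cite[\S44]{L2}, the twenty-one families [Lie{\hskip1pt}13]--[Lie{\hskip1pt}33] are exactly the finite-dimensional Lie algebras of vector fields on $\C^3$ that preserve a $1$-dimensional foliation and no $2$-dimensional foliation \emph{of the compatible form} $\Omega(\varphi,\psi)=\mathrm{const}$. Since requiring that \emph{no} $2$-dimensional foliation be preserved is strictly stronger, every algebra asserted by the proposition occurs among these twenty-one, and conversely it suffices to decide, for each entry, whether it preserves some (necessarily noncompatible) $2$-dimensional foliation. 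First I would fix adapted coordinates $(x,y,z)$ in which the invariant $1$-dimensional foliation consists of the fibers of the projection $\pi\colon\C^3\to\C^2$, $(x,y,z)\mapsto(x,y)$, so that $\g$ projects onto a \emph{primitive} Lie algebra $\bar\g$ on $\C^2$.

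Next I would reduce to transverse foliations. Let $D$ be the tangent distribution of a hypothetical invariant $2$-dimensional foliation: it is integrable and satisfies $[\g,D]\subseteq D$. If $\partial_z\in D$ at generic points, then $D$ descends along $\pi$ to a $\bar\g$-invariant line field on $\C^2$, contradicting primitivity of $\bar\g$; hence $D$ is transverse to the fibers and takes the horizontal form $D=\langle\,\partial_x+f\,\partial_z,\ \partial_y+g\,\partial_z\,\rangle$ at generic points, the only vertical field lying in $D$ being the zero field. The distinguished transverse foliation $z=\mathrm{const}$ (the case $f=g=0$) is invariant precisely when each generator has $\partial_z$-component depending on $z$ alone. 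Testing this criterion against Lie's explicit generators immediately discards [Lie{\hskip1pt}13], [Lie{\hskip1pt}14], [Lie{\hskip1pt}15], [Lie{\hskip1pt}19], [Lie{\hskip1pt}20], [Lie{\hskip1pt}21], [Lie{\hskip1pt}22], [Lie{\hskip1pt}26], [Lie{\hskip1pt}28], [Lie{\hskip1pt}33]; for the six parametric families [Lie{\hskip1pt}17], [Lie{\hskip1pt}18], [Lie{\hskip1pt}23], [Lie{\hskip1pt}25], [Lie{\hskip1pt}30], [Lie{\hskip1pt}32] the generators $x^iy^j\partial_z$ with $1\le i+j\le h$ violate the criterion, so $z=\mathrm{const}$ is preserved exactly when $h=0$.

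It remains to establish the positive statement: each of the eleven survivors preserves \emph{no} transverse foliation either. Here I would use that all these algebras act transitively on an open subset of $\C^3$, so an invariant distribution is determined by an isotropy-invariant subspace $W\subset T_p\C^3$ at a point $p$. A transverse invariant $2$-plane $W$ is exactly a splitting of the isotropy-module sequence $0\to\ell\to T_p\C^3\to T_p\C^3/\ell\to0$, where $\ell$ is the vertical line; equivalently it is an invariant Ehresmann connection on $\pi$. The $2$-planes containing $\ell$ correspond to invariant lines in $T_p\C^3/\ell\cong T\C^2$ and are excluded by primitivity, so an invariant $2$-foliation can exist only if this extension splits. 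I would therefore compute the isotropy representation for each survivor and verify that the extension is non-split; the mechanism is the one visible for \eqref{L6}, where the translations force $f,g$ to be functions of $z$, after which the shear generator $\partial_y+x\,\partial_z$ produces a nonzero vertical field inside $D$, contradicting $[\g,D]\subseteq D$.

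The hard part will be exactly this non-split verification, which has to be performed for all eleven algebras and, for the parametric families, uniformly in $h$ (and in the extra parameter $a$ of [Lie{\hskip1pt}23]): one must check that the relevant extension class never vanishes once $h\ge1$. Conceptually the content is that [Lie{\hskip1pt}16], [Lie{\hskip1pt}24], [Lie{\hskip1pt}27], [Lie{\hskip1pt}29], [Lie{\hskip1pt}31] and the six families with $h\ge1$ are precisely the algebras realizing the bundle $\C^3\to\C^2$ in a genuinely non-split fashion; concretely, each individual check collapses to a short commutator computation of the type carried out for \eqref{L6}, so the argument is routine case by case even though there is no shorter way to avoid inspecting the cases.
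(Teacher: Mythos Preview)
Your elimination step is identical to the paper's: start from Lie's 21 entries, discard the ten algebras [Lie{\hskip1pt}13--15, 19--22, 26, 28, 33] and the $h=0$ members of the six parametric families because they visibly preserve $z=\mathrm{const}$. Your reduction to transverse $2$-distributions via primitivity of the quotient action on $\C^2$ is also the right observation.

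Where you diverge from the paper is in the positive verification. You propose to compute, for each of the eleven survivors (uniformly in $h$ and $a$), the isotropy representation and check that the short exact sequence $0\to\ell\to T_p\C^3\to T_p\C^3/\ell\to0$ does not split. That works, but the paper avoids these eleven checks entirely. It first establishes (see the discussion around Figure~\ref{fig:imprimitive}) that every one of the eleven algebras contains one of [Lie{\hskip1pt}16], [Lie{\hskip1pt}27], [Lie{\hskip1pt}29] as a subalgebra; since a $2$-foliation invariant under the larger algebra is a fortiori invariant under the smaller, it suffices to treat just these three. For [Lie{\hskip1pt}16] and [Lie{\hskip1pt}27] the paper observes that they preserve a contact structure ($dz-y\,dx$ and $dy-z\,dx$ respectively): the contact plane is then the \emph{unique} invariant $2$-plane distribution, and it is nonintegrable, so no invariant $2$-foliation exists. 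For [Lie{\hskip1pt}29] one checks directly that there is no invariant $2$-plane distribution at all. Thus the paper replaces your eleven case-by-case extension computations by two inclusion reductions plus a one-line contact argument; your route is sound but considerably longer, and the uniformity in $h,a$ you flag as the hard part simply disappears once you reduce via the inclusion diagram.
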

We have the inclusions $\text{[Lie{\hskip1pt}17]}_{h} \subset \text{[Lie{\hskip1pt}17]}_{h+1}$, $\text{[Lie{\hskip1pt}18]}_{h} \subset \text{[Lie{\hskip1pt}18]}_{h+1}$, $\text{[Lie{\hskip1pt}23]}_{a,h} \subset \text{[Lie{\hskip1pt}23]}_{a,h+1}$, $\text{[Lie{\hskip1pt}25]}_{h} \subset \text{[Lie{\hskip1pt}25]}_{h+1}$,
while similar inclusions do not hold for [Lie{\hskip1pt}30]$_{h+1}$ and [Lie{\hskip1pt}32]$_{h+1}$. However, from the coordinate expressions in \cite{L2} one can easily verify that $\text{[Lie{\hskip1pt}17]}_{h=1}$ is contained in [Lie{\hskip1pt}18]$_h$, [Lie{\hskip1pt}23]$_{a,h}$, [Lie{\hskip1pt}25]$_h$, [Lie{\hskip1pt}30]$_h$ and [Lie{\hskip1pt}32]$_h$ for any $h \geq 1$ and any $a$. Furthermore, it is apparent that [Lie{\hskip1pt}24] contains [Lie{\hskip1pt}16]. This makes it clear that all of the Lie algebras from Proposition \ref{prop:imprimitive} contain as subalgebras either [Lie{\hskip1pt}16], [Lie{\hskip1pt}17]$_{h=1}$, [Lie{\hskip1pt}27], [Lie{\hskip1pt}29] or [Lie{\hskip1pt}31]. These five Lie algebra realizations are given (respectively) by
 \begin{gather*}
\langle \partial_x, \partial_y+x \partial_z, x \partial_y + \tfrac{1}{2} x^2 \partial_z, x \partial_x-y \partial_y, y \partial_x+\tfrac{1}{2} y^2 \partial_z, \partial_z \rangle, \\
\langle \partial_x,\partial_y, x\partial_y, x\partial_x -y\partial_y, y \partial_x, \partial_z, x\partial_z, y\partial_z \rangle, \\
 \langle \partial_x,\partial_y, x \partial_y+\partial_z, x \partial_x -y\partial_y-2 z \partial_z,y\partial_x-z^2 \partial_z, x \partial_x+y\partial_y, \\
\qquad x^2 \partial_x+xy\partial_y+(y-xz)\partial_z, xy\partial_x+y^2 \partial_y+z(y-xz) \partial_z \rangle, \\
\langle \partial_x ,\partial_y ,x \partial_y ,x \partial_x -y \partial_y ,y \partial_x ,x \partial_x +y \partial_y +\partial_z,x^2 \partial_x +x y \partial_y +\tfrac{3}{2} x \partial_z,x y \partial_x +y^2 \partial_y +\tfrac{3}{2} y \partial_z \rangle, \\
\langle \partial_x, \partial_y, x\partial_y, x\partial_x-y\partial_y , y\partial_x, x \partial_x +y\partial_y, \partial_z, x^2 \partial_x +xy\partial_y +x\partial_z, xy\partial_x+y^2 \partial_y +y\partial_z \rangle.
  \end{gather*}
Here, we have changed coordinates for the last Lie algebra [Lie{\hskip1pt}31] in order to avoid the singular orbit $z=0$, and after this coordinate change it is clear that it contains the fourth one [Lie{\hskip1pt}29]. The coordinate transformation $z\mapsto z-xy/2$ takes the first Lie algebra [Lie{\hskip1pt}16] to
\[ \langle \partial_x-\tfrac{1}{2} y \partial_z, \partial_y+\tfrac{1}{2} x \partial_z, x \partial_y,x\partial_x-y\partial_y,y\partial_x, \partial_z\rangle\]
which is obviously a Lie subalgebra of the second one [Lie{\hskip1pt}17]$_{h=1}$.

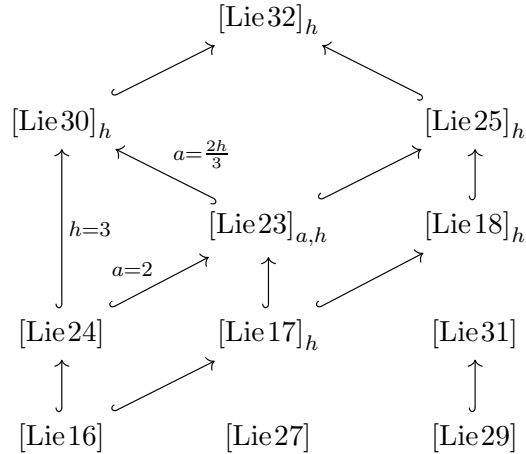
\begin{figure}[h]
\begin{tikzcd}
& & \text{[Lie{\hskip1pt}32]}_h \\
&\text{[Lie{\hskip1pt}30]}_h\arrow[hook]{ur}  & & \text{[Lie{\hskip1pt}25]}_h \arrow[hook]{ul}    \\
& &\text{[Lie{\hskip1pt}23]}_{a,h}\arrow[hook]{ur}\arrow[hook]{ul}[swap]{a=\frac{2h}{3}} &\text{[Lie{\hskip1pt}18]}_h\arrow[hook]{u}        \\
&\text{[Lie{\hskip1pt}24]}\arrow[hook]{uu}[swap]{h=3}\arrow[hook]{ur}{a=2}  & \text{[Lie{\hskip1pt}17]}_h\arrow[hook]{u} \arrow[hook]{ur} &\text{[Lie{\hskip1pt}31]}    \\
&\text{[Lie{\hskip1pt}16]}\arrow[hook]{u} \arrow[hook]{ur} &\text{[Lie{\hskip1pt}27]} & \text{[Lie{\hskip1pt}29]} \arrow[hook]{u}
\end{tikzcd}
\caption{Diagram of inclusions for the Lie algebras listed in Proposition \ref{prop:imprimitive}. All inclusions hold for any fixed $h \geq 1$, except for the one that is marked.}
\label{fig:imprimitive}
\end{figure}

Thus, every Lie algebra that preserves a 1-dimensional foliation and no 2-dimensional foliation contains a Lie subalgebra which is locally equivalent to [Lie{\hskip1pt}16], [Lie{\hskip1pt}27] or [Lie{\hskip1pt}29], corresponding to \eqref{L6}, \eqref{L7} and \eqref{L8}, respectively.
Notice that  [Lie{\hskip1pt}16] (abstractly $\mathfrak{sl}(2) \ltimes \mathfrak{heis}(3)$) does not embed into [Lie{\hskip1pt}27] or [Lie{\hskip1pt}29] (abstractly $\mathfrak{sl}(3)$). Moreover, [Lie{\hskip1pt}27] and [Lie{\hskip1pt}29] are clearly different realizations of $\mathfrak{sl}(3)$ since [Lie{\hskip1pt}27] preserves a contact distribution, while [Lie{\hskip1pt}29] does not.
Figure \ref{fig:imprimitive} shows a more comprehensive tree of inclusions of these Lie algebras. We note also that \mbox{[Lie{\hskip1pt}16]} embeds into the realization of $\mathfrak{sp}(4)$ that we considered in \S\ref{sect:Sytems}.
However, since the last one is primitive while the first one is not, we separated them in our treatment.

\begin{remark}
To finish the proof of Proposition \ref{prop:imprimitive}, one must also verify that none of the Lie algebras listed preserve a 2-dimensional foliation. Since they all contain either [Lie{\hskip1pt}16], [Lie{\hskip1pt}27] or [Lie{\hskip1pt}29], it is sufficient to verify that these three do not preserve a 2-dimensional foliation. We skip the details, but note
that the Lie algebras [Lie{\hskip1pt}16] and [Lie{\hskip1pt}27] both preserve a contact structure, given by the contact forms $dz-y dx$ and $dy-zdx$, respectively. The Lie algebra \mbox{[Lie{\hskip1pt}29]} preserve no 2-dimensional distribution.
\end{remark}

\bigskip

\textsc{Acknowledgment.} The question starting this note arose in a discussion with Dennis The, whom we kindly thank.
We are also grateful to helpful remarks by Boris Doubrov, Niky Kamran and Vladimir Sokolov
on the first version of the manuscript. 

The research leading to our results has received funding from the Norwegian Financial Mechanism 2014-2021
(project registration number 2019/34/H/ST1/00636) and the Tromsø Research Foundation
(project “Pure Mathematics in Norway”). A considerable part of the work on this project was done while E. Schneider was employed by Faculty of Science at the University of Hradec Králové
and was receiving full support from the Czech Science Foundation (GA\v{C}R no. 19-14466Y).





\begin{thebibliography}{50}

\bibitem{AO}
S.~Adams, P.~Olver, \newblock {\it Prolonged analytic connected group actions are generically free},
Transformation Groups {\bf 23}, 893--913 (2018).

\bibitem{AK}
I.\,M.~Anderson, B.~Kruglikov, {\it Rank 2 distributions of Monge equations: Symmetries, equivalences, extensions},
Adv. Math. {\bf 228} (3), 1435--1465 (2011).

\bibitem{B}
L.\,M.~Berkovich, {\it Absolute invariants and Korteweg de Vries equation}, in {\it Group Theoretical Methods in Physics}, Volume 1, VNU Science Press, Utrecht (1986).

\bibitem{Cap}
A.~\v{C}ap, {\it Correspondence spaces and twistor spaces for parabolic geometries}, J. reine angew. Math. {\bf 582}, 143--172 (2005).

\bibitem{DK}
B.~Doubrov, B.~Komrakov, {\it Contact Lie algebras of vector fields on the plane},
Geometry \& Topology {\bf 3}, 1--20 (1999).

\bibitem{DKM}
B.~Doubrov, B.~Komrakov, T.~Morimoto, {\it Equivalence of holonomic differential equations},
Lobachevskii J. Math. {\bf 3}, 39--71 (1999).

\bibitem{DZ}
B.~Doubrov, I.~Zelenko, {\it Symmetries of trivial systems of ODEs of mixed order},
Differential Geom.\ Appl.\ {\bf 33}, 123--143 (2014).

\bibitem{Dr}
J.~Draisma, {\it Transitive Lie Algebras of Vector Fields: an Overview},
Qualitative Theory of Dynamical Systems {\bf 11}, 39--60 (2012).

\bibitem{D}
E.\,B.~Dynkin, {\it Maximal subgroups of the classical groups}, Amer.\ Math.\ Soc.\ Transl.,
II. Ser.\ {\bf 6}, 245--378 (1957).

\bibitem{FO}
M.~Fels, P.~Olver, \newblock {\it Moving Coframes. II. Regularization and theoretical foundations},
Acta Appl. Math. {\bf 55}, 127--208 (1999).

\bibitem{GKO}
A.~González-López, N.~Kamran, P.~Olver, {\it Lie algebras of vector fields in the real plane},
Proc. London Math. Soc. {\bf 64}, 339--368 (1992).

\bibitem{GN}
M.~Godlinski, P.~Nurowski, {\it Geometry of third-order ODEs}, arxiv:0902.4129 (2009).

\bibitem{Go}
M.~Golubitsky, {\it Primitive actions and maximal subgroups of Lie groups},
J.\ Differential Geom.\ {\bf 7} no.\ 1-2, 175--191 (1972).

\bibitem{G}
G.~Grebot, {\it The characterization of third order ordinary differential equations admitting a transitive
fiber-preserving point symmetry group}, J. Math. Anal. Appl. {\bf 206}, 364--388 (1997).

\bibitem{HK}
L.~Hsu, N.~Kamran, {\it Classification of second-order ordinary differential equations admitting Lie groups
of fibre-preserving point symmetries}, Proc. London Math. Soc. (3) {\bf 58}, no. 2, 387--416 (1989).

\bibitem{KT}
J.\,A.~Kessy, D.~The, {\it Symmetry Gaps for higher order Ordinary Differential Equations}, J. Math. Anal. Appl. {\bf 516}, 126475 (2022).

\bibitem{KLV}
I.\,S.~Krasil'shchik, V.\,V.~Lychagin, A.\,M.~Vinogradov, {\it Geometry of jet spaces and differential equations\/},
Gordon and Breach (1986).

\bibitem{Kr}
B. Kruglikov, {\it Point classification of second order ODEs: Tresse classification revisited and beyond\/}
(with an appendix by B.Kruglikov and V.Lychagin), in {\it  Abel Symp. {\bf 5},
Differential equations: geometry, symmetries and integrability,}  199--221, Springer, Berlin (2009).

\bibitem{KL}
B.~Kruglikov, V.~Lychagin, \newblock {\it The global Lie-Tresse theorem},
Selecta Math.\ {\bf22}, 1357--411, (2016).

\bibitem{L1}
S.~Lie, {\it Klassifikation und Integration von gewöhnlichen Differentialgleichungen zwischen $x,y$, die  eine  Gruppe  von  Transformationen  gestatten}, Math. Ann. Bd. XXXII (1888). Also found in: ``Gesamelte Abhandlungen'', Volume 5, B.G. Teubner (1924).

\bibitem{L2}
S.~Lie, F.~Engel, {\it Theorie der Transformationsgruppen, vol. 3}, B.G. Teubner (1893).

\bibitem{M}
V.\,V.~Morozov, {\it Sur les groupes primitifs}, Rec.\ Math.\ Moscou n.\ Ser. {\bf5}, 355--390 (1939).

\bibitem{O}
P.~Olver, {\it Equivalence, Invariants, and Symmetry}, Cambridge University Press (1995).

\bibitem{Ovs}
L.\,V.~Ovsiannikov, {\it Group Analysis of Differential Equations}, Academic Press, New York (1982).

\bibitem{P}
J.\,M.~Page, {\it On the Primitive Groups of Transformations in Space of Four Dimensions},
Amer.\ Jour.\ Math.\ {\bf10}, no.4, 293--346 (1888).

\bibitem{Sok}
V.\,V.~Sokolov, {\it On the symmetries of evolution equations}, Russ. Math. Surveys {\bf 43}:5, 165--204 (1988).

\bibitem{S}
E.\ Schneider, {\it Projectable Lie algebras of vector fields in 3D},
Journ.\ Geom.\ Phys.\ {\bf 132}, 222--229 (2018).

\bibitem{T}
N.~Tanaka, {\it On differential systems, graded Lie algebras and
pseudogroups\/}, Jour.\ Math.\ Kyoto Univ., {\bf 10}, 1--82 (1970).

\bibitem{WMQ}
C.~Wafo Soh, F.M.~Mahomed, C.~Qu, {\it Contact Symmetry Algebras of Scalar Ordinary Differential Equations},
Nonlinear Dynamics {\bf 28}, 213--230 (2002).

\bibitem{Y}
I.~Yehorchenko, {\it Differential Invariants and Construction of Conditionally Invariant Equations},
Proceedings of Institute of Mathematics of NAS of Ukraine, {\bf 43}, Part 1, 256--262 (2002).

\end{thebibliography}
\end{document}